\author{Alexei Entin}
\address{Raymond and Beverly Sackler School of Mathematical Sciences, Tel Aviv University, Tel Aviv 69978, Israel}
\email{aentin@tauex.tau.ac.il}
\author{Sean Landsberg}
\address{Raymond and Beverly Sackler School of Mathematical Sciences, Tel Aviv University, Tel Aviv 69978, Israel}
\email{seanl@mail.tau.ac.il}
\subjclass[2020]{11N32, 11T06, 11T55, 12E10}
\keywords{Least common multiple, polynomial values, polynomial, function field}
\title{The Least Common Multiple of Polynomial Values over Function Fields}
\date{}
\newcommand{\LL}{L_f(n) }
\newcommand{\PP}{P_f(n) }
\newcommand{\F}{{\mathbb{F}}}
\newcommand{\Z}{{\mathbb{Z}}}
\newcommand{\A}{\mathbb{F}_q[T]}
\newcommand{\AX}{\mathbb{F}_q[T][X]}
\newcommand{\val}[1]{v_P(#1) }
\newcommand{\an}{\alpha_P(n) }
\newcommand{\bn}{\beta_P(n) }
\newcommand{\p}[1]{\rho_f(P^{#1})}
\newcommand{\forb}[1]{\left( \frac{E / \F_q(T)}{#1} \right)}
\newtheorem{claim}{Claim}[section]
\newtheorem{conj}[claim]{Conjecture} 
\newtheorem{thrm}[claim]{Theorem}
\newtheorem{prop}[claim]{Proposition}
\newtheorem{lem}[claim]{Lemma}
\theoremstyle{definition}
\newtheorem{defn}[claim]{Definition} 
\newtheorem{rem}[claim]{Remark}
\newtheorem{ex}[claim]{Example}
\begin{document}
\maketitle
\begin{abstract}
    Cilleruelo conjectured that for an irreducible polynomial $f \in \mathbb{Z}[X]$ of degree $d \geq 2$ one has $$\log\left[\mathrm{lcm}(f(1),f(2),\ldots f(N))\right]\sim(d-1)N\log N$$ as $N \to \infty$. He proved it in the case $d=2$ but it remains open for every polynomial with $d>2$.
    
    We  investigate the function field analogue of the problem by 
    considering polynomials over the ring $\F_q[T]$. We state an analog 
    of Cilleruelo's conjecture in this setting: denoting by $$\LL 
    := \mathrm{lcm} \left(f\left(Q\right)\ : \ Q \in \F_q[T]\mbox{ 
    monic},\, \deg Q = n\right)$$ we conjecture that 
    \begin{equation}\label{eq:conjffabs}\deg\LL \sim c_f \left(d-1\right) 
    nq^n,\ n \to \infty\end{equation} ($c_f$ is an explicit constant 
    dependent only on $f$, typically $c_f=1$).
    We give both upper and lower bounds for $L_f(n)$ and show that 
    the asymptotic (\ref{eq:conjffabs}) holds for a class of ``special" 
    polynomials, initially considered by Leumi in this context, which 
    includes all quadratic polynomials and many other examples as 
    well. We fully classify these special polynomials. We also show 
    that $\deg L_f(n) \sim  \deg\mathrm{rad}\left(\LL\right)$ (in other words the corresponding LCM is close to being squarefree), which is not known over $\Z$.
    \\ \\

\end{abstract}

\numberwithin{equation}{section}

\section{Introduction}
While studying the distribution of prime numbers, Chebychev estimated the least common multiple of the first $N$ integers. This was an important step towards the prime number theorem. In fact the condition $\textrm{log lcm}\left(1 ,\ldots, N\right) \sim N$ is equivalent to the prime number theorem. This problem later inspired a more general problem of studying the least common multiple of polynomial sequences. For a linear polynomial $f(X) = k X + h,\ k,h\in\Z,\ k>0, h+k > 0$ it was observed by Bateman and Kalb \cite{bateman2002limit}, that
$\textrm{log lcm}\left(f\left(1\right) ,\ldots, f\left(N\right)\right) \sim c_f N$ as $N\to\infty$, where $c_f = \frac{k}{\varphi(k)} \sum_{1 \leq m \leq k,(m,k) = 1} \frac{1}{m}$, which is a consequence of the Prime Number Theorem for arithmetic progressions. 

Cilleruelo conjectured \cite{cilleruelo2011least} that for any irreducible polynomial $f \in \mathbb{Z}[X]$ of degree $d\ge 2$ the following estimate holds:
\begin{equation}\label{eq:conj_intro}\textrm{log lcm}\left(f\left(1\right) ,..., f\left(N\right)\right) \sim \left(d-1\right)N \log N\end{equation} as $f$ is fixed and $N\to\infty$.
\\ \\
{\bf Convention.} Throughout the rest of the paper in all asymptotic notation the polynomial $f$, and the parameter $q$ to appear later, are assumed fixed, while the parameter $N$ (or $n$) is taken to infinity. If any parameters other than $f,q,N,n$ appear in the notation, the implied constant or rate of convergence are uniform in these parameters. 
\\ \\
Cilleruelo proved (\ref{eq:conj_intro}) for quadratic polynomials and there are no known examples of polynomials of degree $d > 2$ for which the conjecture is known to hold. Cilleruelo's argument also shows the predicted upper bound, i.e. if $f \in \mathbb{Z}[X]$ is an irreducible polynomial of degree $d \geq 2$, then
$$\log\mathrm{lcm}\,\left(f(1),\ldots,f(N)\right)\lesssim (d-1)N\log N.$$

Maynard and Rudnick \cite{maynard2019lower} provided a lower bound of the correct order of magnitude:
\begin{equation}\label{eq:sah_lower_bound}\textrm{log lcm}\left(f(1),\ldots, f(N)\right) \gtrsim \frac{1}{d} N \log N.\end{equation}
Sah \cite{sah2020improved} improved upon this lower bound while also providing a lower bound for the radical of the least common multiple:
$$\textrm{log lcm}\left(f\left(1\right) ,\ldots, f\left(N\right)\right) \gtrsim N \log N,$$

\begin{equation}\label{eq:sah_lower_bound_rad}\textrm{log rad}\left[ \textrm{lcm}\left(f\left(1\right) ,..., f\left(N\right)\right)\right] \gtrsim \frac{2}{d} N \log N.\end{equation}

Rudnick and Zehavi \cite{rudnick2020cilleruelo} established an averaged form of (\ref{eq:conj_intro}) with $f$ varying in a suitable range. Leumi \cite{leumi2021lcm} studied a function field analogue of the problem. In the present work we expand upon and generalize the results and conjectures in \cite{leumi2021lcm}, as well as correct some erroneous statements and conjectures from the latter work. Despite some overlap, we have kept our exposition self-contained and independent of \cite{leumi2021lcm}.
 
\subsection{The function field analogue.}

Let $q=p^k$ be a prime power. For a polynomial $f \in \AX$ of degree $d\ge 1$ of the form 
\[f(X) = f_d X^d + f_{d-1} X^{d-1} + ... + f_0,\ f_i\in\F_q[T]\]
set
\begin{equation}\label{eq: def Lfn}\LL := \mathrm{lcm} \left(f\left(Q\right): \ Q \in M_n\right),\end{equation}
where $$M_n := \{Q \in \A \text{ monic},\ \deg Q = n\}.$$ Also denote
$$V_f := \left\{g \in \A:\ f\left(X + g\right) = f\left(X\right)\right\}.$$
The set $V_f$ is a finite-dimensional $\F_p$-linear subspace of $\A$ (see Lemma \ref{V_f: trivial when degree is not devisable by p} below). Denote $$c_f := \frac{1}{|V_f|}.$$ We now state a function field analog of Cilleruelo’s conjecture.

\begin{conj}
\label{the conjecture}
    Let $f \in \AX$ be a fixed irreducible polynomial with $\deg_X f = d \geq 2$. Then
    \[\deg \LL \sim c_f \left(d-1\right) nq^n,\ n \to \infty.\]
\end{conj}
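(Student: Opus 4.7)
The statement is a conjecture, and a proof in full generality appears out of reach with current tools; I nevertheless describe the strategy I would attempt, which parallels Cilleruelo's approach over $\Z$. Writing $\alpha_P(n)=\max_{Q\in M_n}v_P(f(Q))$ and $N_k(P)=\#\{Q\in M_n:v_P(f(Q))\geq k\}$, one has
\[\deg\LL = \sum_P \deg(P)\,\alpha_P(n), \qquad \deg\prod_{Q\in M_n}f(Q) \;=\; dnq^n \;=\; \sum_P \deg(P)\sum_{k \geq 1}N_k(P),\]
so Conjecture \ref{the conjecture} is equivalent to the ``surplus'' asymptotic
\[\sum_P \deg(P)\sum_{k \geq 1}\bigl(N_k(P)-\mathbf{1}_{[N_k(P)\geq 1]}\bigr) \;\sim\; \bigl(d - c_f(d-1)\bigr)\,nq^n,\]
which records how often each prime power $P^k$ is repeated among the values $f(Q)$ beyond its first appearance.

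The next step is to partition the primes by degree. Primes with $\deg P > dn$ contribute nothing since $\deg f(Q) = dn$. Primes with $\deg P$ much smaller than $n$ should be handled by elementary bounds on $N_k(P)$ for $k\geq 2$ together with a sieve argument, contributing only lower-order terms. The bulk of both the product and the repetition error lives in the range $n\lesssim\deg P\lesssim dn$, where $v_P(f(Q))\in\{0,1\}$ for almost all $Q$ and $N_1(P)$ equals, up to a manageable error, $r_P\cdot q^{\max(n-\deg P,0)}$ with $r_P$ the number of roots of $f$ modulo $P$. A Chebotarev density theorem for the splitting field of $f$ over $\F_q(T)$ then gives the average of $r_P$, and the ``trivial'' collisions $f(Q_1)=f(Q_2)\pmod P$ forced by the translation stabilizer $V_f$ produce exactly the predicted factor $c_f=1/|V_f|$ in the main term.

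The main obstacle is the same as over $\Z$: bounding the number of \emph{nontrivial} collisions, namely pairs $Q_1,Q_2\in M_n$ with $Q_1-Q_2\notin V_f$ and $f(Q_1)\equiv f(Q_2)\pmod P$ for some large $P$. These are precisely the $\F_q[T]$-points of bounded degree on the affine variety $\{f(X)=f(Y)\}$ after removal of its ``translation'' components, and they account for the error between the Chebotarev-predicted value and the true count. For $d=2$ (and, as this paper shows, the additional ``special'' polynomials) this residual variety decomposes into components of genus zero over $\F_q(T)$ and the point count is tractable, so the asymptotic closes; for a generic $f$ of degree $d\geq 3$ the leftover components have positive genus and controlling their $\F_q[T]$-points of a given degree is the function-field analogue of counting integral points on surfaces defined by $f(x)=f(y)$ over $\Z$, which is beyond current reach. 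This is exactly why the authors restrict the asymptotic equality to the ``special'' class and otherwise content themselves with matching upper and lower bounds.
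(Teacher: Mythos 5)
You rightly treat this statement as a conjecture rather than claiming a proof, and your overall reduction matches the paper's heuristic framework: the identity $\deg P_f(n)=dnq^n+O(q^n)$, the Chebotarev evaluation showing that primes of degree $\le n$ contribute $nq^n+O(q^n)$ to $\deg P_f(n)$ but only $O(q^n)$ to $\deg L_f(n)$ (\Cref{small P} and (\ref{eq: bound small primes})), the emergence of $c_f=1/|V_f|$ from the translation stabilizer, and the equivalence of the conjecture with the assertion that large primes rarely divide $f(Q_1)$ and $f(Q_2)$ for inequivalent $Q_1,Q_2$ (\Cref{Eqiv to conjecture} states this as $S_f(n)=o(q^n)$). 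One notational caution: you use $\alpha_P(n)$ for the maximum of $v_P(f(Q))$, which the paper calls $\beta_P(n)$, reserving $\alpha_P(n)$ for the sum.

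Where your proposal goes wrong is in identifying the remaining obstacle. The pairs that must be controlled satisfy $P\mid f(Q_1)-f(Q_2)$ for some prime with $\deg P>n+\deg f_d$ and $Q_1-Q_2\notin V_f$; since $f(Q_1)-f(Q_2)$ has degree up to $dn$, it can be a \emph{nonzero} multiple of such a $P$, so these pairs are not $\F_q[T]$-points on the variety $f(X)=f(Y)$. The exact-equality locus is in fact handled unconditionally for \emph{every} irreducible $f$ in \Cref{prop: fq1=fq2}: quantitative Hilbert irreducibility over $\F_q(T)$ gives $O(q^{n/2})$ pairs with $f(Q_1)=f(Q_2)$ and $Q_1\not\sim Q_2$, with no genus considerations and no restriction to special polynomials. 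What makes special polynomials tractable is different: each linear factor $a_iQ_1+b_iQ_2+c_i$ of $f(Q_1)-f(Q_2)$ has degree $\le n+\deg f_d<\deg P$, so divisibility by $P$ forces it to vanish, which converts every congruence into an exact equality and then \Cref{prop: fq1=fq2} finishes. For non-special $f$, an irreducible factor $g_i(X,Y)$ of degree $\ge 2$ yields values $g_i(Q_1,Q_2)$ of degree comparable to $(\deg g_i)\,n$, which can be divisible by a prime of degree just above $n$ without vanishing; bounding the number of such congruences, not counting points on $f(X)=f(Y)$, is the open problem. (The same is true over $\Z$, where exact equality $f(n_1)=f(n_2)$ is ruled out trivially by monotonicity, so the analogy with ``integral points on $f(x)=f(y)$'' does not capture the difficulty there either.)
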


\begin{rem}\label{rem: V_f char 0}
     The expression $(d-1)nq^n$ is directly analogous to $(d-1)N\log N$ appearing in (\ref{eq:conj_intro}). Over the integers (or generally in characteristic 0) if $f$ is not constant then $V_f$ is always trivial and no constant $c_f$ appears in (\ref{eq:conj_intro}). This is because if $0 \neq g \in V_f$ then $2g,...,d g$ are also in $V_f$ implying $f\left(0\right) = f\left(g\right) = ... = f\left(d g\right)$. Since $f\left(x\right) - f\left(0\right)$ has at most $d$ roots, we reach a contradiction. Even over $\F_q[T]$, the typical case that occurs for ``most'' polynomials is $c_f=1$ (i.e. $V_f$ is trivial). A heuristic justification of \Cref{the conjecture} will be given in \Cref{sec: heuristic}.
\end{rem}

In the present paper we prove that Conjecture \ref{the conjecture} gives the correct upper bound:

\begin{thrm}
\label{Upper bound}
    \[\deg L_f(n) \lesssim c_f(d-1)n q^n.\]
\end{thrm}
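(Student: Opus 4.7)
The plan is to bound $\deg L_f(n) = \sum_P \alpha_P(n)\deg P$ (with $\alpha_P(n) := \max_{Q\in M_n}v_P(f(Q))$) by combining a naive upper bound arising from the $V_f$-orbit structure on $M_n$ with a Chebotarev-based lower bound on the resulting loss term.

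First, for $n$ exceeding $\max_{g\in V_f}\deg g$ (a finite quantity since $V_f$ is finite), translation by $V_f$ acts freely on $M_n$ with $c_f q^n$ orbits, on each of which $f(Q)$ is invariant. Setting $\beta'_P(n) := \sum_{[Q]\in M_n/V_f} v_P(f(Q))$, the divisibility $L_f(n)\mid\prod_{[Q]}f(Q)$ together with $\alpha_P(n)\leq\beta'_P(n)$ gives
\[
\deg L_f(n) \;\leq\; \sum_P \beta'_P(n)\deg P \;=\; c_f\sum_{Q\in M_n}\deg f(Q) \;=\; c_f d n q^n + O(q^n).
\]
This has the wrong main constant ($d$ rather than $c_f(d-1)$), so the real content is extracting an additional $c_f n q^n$ from the loss $\sum_P(\beta'_P(n)-\alpha_P(n))\deg P$.

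Second, since each orbit $[Q]$ with $P\mid f(Q)$ contributes at least $1$ to $\beta'_P(n)$ while $\alpha_P(n)$ captures only one orbit,
\[
\beta'_P(n) - \alpha_P(n) \;\geq\; \#\{[Q]\in M_n/V_f : P\mid f(Q)\} - 1.
\]
For each prime $P$ coprime to $f_d\cdot\mathrm{disc}_X(f)$ with $\deg P\leq n$, the set $\{Q\in M_n:P\mid f(Q)\}$ is the $V_f$-stable union of $\rho_f(P)$ residue classes mod $P$, each of cardinality $q^{n-\deg P}$, and so contains exactly $c_f\rho_f(P)q^{n-\deg P}$ orbits. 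The finitely many bad primes contribute $O(1)$ and the $-1$ terms sum to $O(\sum_{\deg P\leq n}\deg P) = O(q^n)$, giving
\[
\sum_P\bigl(\beta'_P(n)-\alpha_P(n)\bigr)\deg P \;\geq\; c_f q^n \!\!\!\sum_{\substack{P\text{ good}\\ \deg P\leq n}}\!\!\! \rho_f(P)\frac{\deg P}{|P|} \;-\; O(q^n).
\]

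Finally, I would evaluate $S_n := \sum_{\text{good }P,\,\deg P\leq n}\rho_f(P)\deg P/|P|$ via Chebotarev. Letting $E/\F_q(T)$ denote the splitting field of $f$ with Galois group $G$, the irreducibility of $f$ forces $G$ to act transitively on the $d$ roots, so Burnside's formula gives that the average of $\rho_f$ over Frobenius classes equals $1$. The effective function-field Chebotarev density theorem (a consequence of the Weil bounds) yields $\sum_{\deg P=m,\,\text{good}}\rho_f(P) = q^m/m + O(q^{m/2}/m)$, so each $m$ contributes $1 + O(q^{-m/2})$ to $S_n$, whence $S_n = n + O(1)$. Substituting back:
\[
\deg L_f(n) \;\leq\; c_f d n q^n - c_f n q^n + O(q^n) \;=\; c_f(d-1)n q^n + O(q^n),
\]
which is the claimed bound. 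I expect the main technical obstacle to be the uniform effective Chebotarev estimate as $m$ ranges up to $n$---the key point being that the error is geometrically summable, which is what produces $S_n = n + O(1)$---together with the bookkeeping to verify that the finitely many bad primes $P\mid f_d\cdot\mathrm{disc}_X(f)$ and the $-1$ error terms only contribute $O(q^n)$, comfortably lower order than the main term.
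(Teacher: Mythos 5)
Your decomposition is, once the notation is unwound, the same as the paper's: your $\beta'_P(n)$ equals $c_f\alpha_P(n)$ in the paper's notation (where $\alpha_P(n)=\sum_{Q\in M_n}v_P(f(Q))$), your $\alpha_P(n)$ is the paper's $\beta_P(n)$, and you are estimating $\deg L_f(n)=c_f\deg P_f(n)-\sum_P\bigl(c_f\alpha_P(n)-\beta_P(n)\bigr)\deg P$ exactly as in \Cref{sec: upper bound}. Where you genuinely diverge --- and, I think, streamline the argument --- is in the loss term. The paper evaluates $\sum_{\deg P\le n}c_f\alpha_P(n)\deg P$ asymptotically exactly, which forces it to control $\rho_f(P^k)$ for all $k\ge1$ via the Hensel-type \Cref{beta for Hensel,Lift root seprable,Hensel for seprable,Hensel for inseprable} (with a separate resultant $\mathrm{Res}(f,\frac{\partial f}{\partial T})$ for inseparable $f$), and then separately shows $\sum_{\deg P\le n}\beta_P(n)\deg P=O(q^n)$. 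Your pointwise bound $c_f\alpha_P(n)-\beta_P(n)\ge c_f\rho_f(P)q^{n-\deg P}-1$ extracts the entire main term from roots modulo the \emph{first} power of $P$ only, so the prime-power analysis drops out of the upper bound. Note that this counting step needs no coprimality hypothesis: for every $P$ with $\deg P\le n$ the solution set is a union of $\rho_f(P)$ residue classes, each meeting $M_n$ in $q^{n-\deg P}$ points, and is $V_f$-stable; the ``good prime'' restriction matters only for the Chebotarev input.

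The one step that is wrong as stated is the per-degree claim $\sum_{\deg P=m}\rho_f(P)=q^m/m+O(q^{m/2}/m)$, from which you deduce that ``each $m$ contributes $1+O(q^{-m/2})$.'' This fails whenever the splitting field $E$ of $f$ contains a nontrivial constant field extension $\F_{q^v}$, $v>1$: for $f=X^2-c$ with $c$ a nonsquare in $\F_q$ ($q$ odd) one has $\rho_f(P)=0$ for all $P$ of odd degree and $\rho_f(P)=2$ for all $P$ of even degree. The correct statement is that in degree $m$ only conjugacy classes restricting to $\phi^m$ on $\F_{q^v}$ occur as Frobenius classes, each with density $v|C|/|G|$, and Burnside's count $\sum_{C}\mathrm{Fix}(C)|C|/|G|=1$ is recovered only after summing over a full block of $v$ consecutive degrees; this still yields $S_n=n+O(1)$, which is all you need, and is exactly how \Cref{small P} proceeds. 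So the fix is routine but necessary. A smaller point in the same direction: for inseparable $f$ one has $\mathrm{disc}_X(f)=0$, so your notion of good prime degenerates and the Galois-theoretic evaluation of $\rho_f(P)$ must be routed through the separable $h$ with $f(X)=h(X^{p^e})$ and $\rho_f(P)=\rho_h(P)$, as in \Cref{Replacment to inseparable polinomial}.
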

The proof of \Cref{Upper bound} will be given in \Cref{sec: upper bound}. We also give a lower bound of the correct order of magnitude (this bound is comparable to the bound in \cite[Theorem 1.3]{sah2020improved} under a mild assumption on $f$):

\begin{thrm}
    \label{lower bound}
    \begin{enumerate}
        \item[(i)]\[ \deg L_f(n) \gtrsim \frac{d-1}{d} n q^n.\]
        \item[(ii)] If $p \nmid d$ or $f_d \nmid f_{d-1}$ then
    \[\deg L_f(n) \gtrsim  n q^n.\]
    \end{enumerate}
\end{thrm}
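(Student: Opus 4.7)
I would prove both parts by comparing $\deg L_f(n)$ with $\deg \prod_{Q \in M_n} f(Q) = (dn + \deg f_d) q^n$, decomposing by primes and controlling the "loss" in passing from the product to the $\mathrm{lcm}$. For each prime $P$ of degree $m \leq n$, one computes $v_P\bigl(\prod_Q f(Q)\bigr) = \sum_Q v_P(f(Q))$ via Hensel lifting: for each simple root $a$ of $f$ mod $P$, Hensel yields a unique lift $a_k \in \F_q[T]/P^k$, and exactly $q^{n - km}$ elements of $M_n$ lie above $a_k$ (when $km \leq n$). Summing these Hensel contributions gives $\sum_Q v_P(f(Q)) = r(P)\, q^n/(q^m - 1) + O(r(P))$, where $r(P)$ is the number of roots of $f$ mod $P$. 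Applying the Weil/Lang-Weil bound $\sum_{\deg P = m} r(P) \sim q^m/m$ to the plane curve $\{f(T, X) = 0\}$ and summing yields $\sum_{\deg P \leq n} v_P\bigl(\prod_Q f(Q)\bigr) \deg P \sim nq^n$; subtracting from the total gives a contribution $\sim (d-1)nq^n$ from primes of degree $> n$.

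For part (i), the injectivity of the reduction $M_n \hookrightarrow \F_q[T]/P$ for $\deg P > n$ forces $s(P) := \#\{Q \in M_n : P \mid f(Q)\} \leq r(P) \leq d$. Then $v_P(L_f(n)) = \max_Q v_P(f(Q)) \geq \frac{1}{s(P)} \sum_Q v_P(f(Q)) \geq \frac{1}{d} v_P\bigl(\prod_Q f(Q)\bigr)$, and summing over $\deg P > n$ gives $\deg L_f(n) \gtrsim \frac{d-1}{d} nq^n$.

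For part (ii), under the hypothesis I would show that $s(P) = 1$ for all primes of sufficiently large degree. Suppose $Q_1 \neq Q_2 \in M_n$ both satisfy $P \mid f(Q_i)$ with $\deg P > n$; since $\deg(Q_1 - Q_2) < n$ we have $P \nmid (Q_1 - Q_2)$, so $P$ must divide $h(Q_1, Q_2) := (f(Q_1) - f(Q_2))/(Q_1 - Q_2) = \sum_{k=1}^d f_k(T) \sum_{i=0}^{k-1} Q_1^{k-1-i} Q_2^i$. A direct leading-term analysis of $h(Q_1, Q_2) \in \F_q[T]$ shows that the hypothesis is precisely what guarantees $\deg h(Q_1, Q_2) \leq (d-1)n + O(1)$ with a generically nonzero leading coefficient: when $p \nmid d$, the term $d f_d$ dominates at degree $(d-1)n + \deg f_d$; when $p \mid d$ (so that term vanishes), the hypothesis $f_d \nmid f_{d-1}$ implies $f_{d-1} \neq 0$, and since $p \nmid (d-1)$ the contribution $(d-1) f_{d-1} \sum_{i=0}^{d-2} Q_1^{d-2-i} Q_2^i$ from the $k=d-1$ summand produces a nonvanishing leading term at degree $(d-2)n + \deg f_{d-1}$. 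Consequently $s(P) \leq 1$ whenever $\deg P > (d-1)n + O(1)$, so such primes contribute fully to $\deg L_f(n)$. For $d = 2$ this immediately yields the desired bound, since then the complementary boundary range $n < \deg P \leq n + O(1)$ contributes only $O(q^n) = o(nq^n)$ to $\sum v_P\bigl(\prod_Q f(Q)\bigr) \deg P$.

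The \emph{main obstacle} is completing part (ii) for $d \geq 3$: the middle range $n < \deg P \leq (d-1)n + O(1)$ can still exhibit collisions $s(P) \geq 2$, and one must show that the total loss from these primes is bounded by $(d-2)nq^n + o(nq^n)$. I would attempt this by combining (a) the count $\#\{(Q_1, Q_2) \in M_n^2 : Q_1 \neq Q_2,\ \gcd(f(Q_1), f(Q_2)) \text{ has a prime factor of degree } > n\} = O(q^n)$ (which follows from the elementary observation that each $f(Q)$ has at most $d$ distinct prime factors of degree $> n$), with (b) a direct lower bound on $\sum_{\deg P > (d-1)n} v_P\bigl(\prod_Q f(Q)\bigr) \deg P$ extracted via a Mertens-type/Abel-summation argument from the density of $Q$'s whose $f(Q)$ possesses a very large prime factor.
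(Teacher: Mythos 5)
Your part (i) is essentially the paper's argument: for $\deg P>n+\deg f_d$ the reduction $M_n\to\F_q[T]/P$ is injective on the relevant values, so at most $d$ elements of $M_n$ can contribute to $v_P\bigl(\prod_Q f(Q)\bigr)$, whence $\alpha_P(n)\le d\,\beta_P(n)$; combined with the fact that primes of degree $\le n$ absorb only $nq^n+O(q^n)$ of $\deg P_f(n)=dnq^n+O(q^n)$, this gives $\deg L_f(n)\gtrsim\frac{d-1}{d}nq^n$. (The paper justifies $\sum_{\deg P\le n}\alpha_P(n)\deg P\sim nq^n$ via Chebotarev plus Burnside rather than a bare Lang--Weil count, which matters when $f$ is not absolutely irreducible, but this is a technical point and your conclusion is the correct one.)

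Part (ii) has a genuine gap. Your strategy is to show $s(P)=1$, i.e.\ no two distinct $Q_1,Q_2\in M_n$ share a prime factor $P$ of $f$, but the divisor $h(Q_1,Q_2)=(f(Q_1)-f(Q_2))/(Q_1-Q_2)$ has degree about $(d-1)n$, so this only rules out collisions for $\deg P>(d-1)n+O(1)$. For $d\ge 3$ the entire range $n+O(1)<\deg P\le (d-1)n+O(1)$ — which carries essentially all of the mass $(d-1)nq^n$ — is left uncontrolled, and you acknowledge this as the "main obstacle." Your proposed repairs do not close it: observation (a), that $O(q^n)$ pairs collide at a prime of degree $>n$, is too weak because a single colliding pair at a prime $P$ can cost up to $\beta_P(n)\deg P\gg n$ in the passage from $\alpha_P$ to $\beta_P$, and summed over $O(q^n)$ pairs this only bounds the loss by $O(nq^n)$, which is the same order as the main term; and (b) is not a concrete argument. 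The paper's route is different and much lighter: instead of forcing $s(P)=1$, it improves the collision bound from $d$ to $d-1$ uniformly for all $\deg P>n+\deg f_d$. Namely, if $Q_1,\dots,Q_d\in M_n$ were $d$ distinct roots of $f$ modulo such a $P$, then $f\equiv f_d\prod_j(X-Q_j)\pmod P$, and comparing coefficients at $X^{d-1}$ gives $P\mid f_{d-1}+f_d\sum_j Q_j$; since this quantity has degree at most $n+\deg f_d<\deg P$ it must vanish, but the $Q_j$ are monic of degree $n$, so either $p\nmid d$ and $\deg\sum_j Q_j=n$ (impossible for large $n$) or $p\mid d$ and $f_d\mid f_{d-1}$, contradicting the hypothesis. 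Hence $\alpha_P(n)\le(d-1)\beta_P(n)$ for every prime of degree $>n+\deg f_d$, and the same averaging as in part (i) yields $\deg L_f(n)\gtrsim\frac{d-1}{d-1}nq^n=nq^n$ with no middle range to worry about. You should replace your part (ii) with an argument of this type; as written it does not constitute a proof for $d\ge 3$.
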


The proof of \Cref{lower bound} will be given in \Cref{sec: lower bound}. We note that by Lemma \ref{V_f: trivial when degree is not devisable by p} below we always have $c_f\ge 1/d$ and $c_f\ge 1/(d-1)$ if $p\nmid d$ or $f_d\nmid f_{d-1}$, so this is consistent with Conjecture \ref{the conjecture}.

Regarding the radical of the LCM $$\ell_f(n):=\mathrm{rad}\ \mathrm{lcm}(f(Q):\ Q\in M_n)$$ we prove the following

\begin{thrm}\label{thm:rad}
\label{ell = L}
    \[\deg \ell_f(n) \sim \deg L_f(n).\]
\end{thrm}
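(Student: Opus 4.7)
The plan is to write
\[
\deg L_f(n) - \deg \ell_f(n) \;=\; \sum_P (\alpha_P(n) - 1)^+ \deg P,
\]
and observe that $(\alpha_P(n) - 1)^+ = \max_{Q \in M_n} (v_P(f(Q)) - 1)^+$, so
\[
\deg L_f(n) - \deg \ell_f(n) \;\leq\; \sum_{Q \in M_n} \sum_P (v_P(f(Q)) - 1)^+ \deg P \;=\; \sum_{Q \in M_n} \bigl(\deg f(Q) - \deg \mathrm{rad}(f(Q))\bigr).
\]
Since $\deg L_f(n) \ll nq^n$ by \Cref{Upper bound}, it suffices to show the right-hand side is $o(nq^n)$. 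I expect in fact that the average squarefull part of $f(Q)$ has bounded degree, giving the stronger bound $O(q^n)$.

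To prove $\sum_Q (\deg f(Q) - \deg \mathrm{rad}(f(Q))) = O(q^n)$, I would expand as
\[
\sum_P \deg P \sum_{k \geq 2} \#\{Q \in M_n : P^k \mid f(Q)\}.
\]
For primes $P$ not dividing $\mathrm{disc}_X(f)\cdot f_d$ and with $k \deg P \leq n$, Hensel's lemma gives $\#\{Q \in M_n : P^k \mid f(Q)\} = r_P\, q^{n - k \deg P}$, where $r_P$ is the number of simple roots of $f \bmod P$. Combined with the Weil bound $\sum_{\deg P = m} r_P = q^m/m + O(q^{m/2}/m)$ (coming from the $\F_{q^m}$-point count on the affine plane curve $\{f(T,X)=0\}$), the resulting geometric sum over $k \geq 2$ and $m$ contributes $O(q^n)$. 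The finitely many bad primes dividing $\mathrm{disc}_X(f)\cdot f_d$ contribute at most $O(q^{n-1})$.

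The delicate remaining contribution comes from primes $P$ with $k \deg P > n$, where the residue map $M_n \to \A/P^k$ is injective and $\#\{Q \in M_n : P^k \mid f(Q)\}$ counts ``Hensel-lift coincidences'' — the number of Hensel lifts $\alpha_k \in \A/P^k$ of simple roots of $f \bmod P$ whose canonical representative happens to be monic of degree exactly $n$. Heuristically one expects $\sum_{\deg P = m} \#\{Q\} \ll q^{n - (k-1)m}/m$, continuing the $k\deg P \leq n$ formula and contributing only $O(q^n)$ in total upon summing over $k \geq 2$ and $m > n/k$.

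\emph{The main obstacle} is making this last estimate rigorous: the naive per-prime bound $\#\{Q\} \leq r_P \leq d$ and the pair-counting bound $\sum_{\deg P = m}\#\{Q\} \leq q^n\,nd/(km)$ (from $\deg f(Q) \leq nd + O(1)$) are each individually too loose, and their minimum summed over $m \in (n/k, nd/k]$ yields only $O(nq^n)$ or $O(nq^n \log n)$. One must therefore exploit either a function-field equidistribution statement for the Hensel lifts of roots of $f \bmod P$ across primes of a fixed degree, or a sieve-type argument leveraging the sharp Weil bound including the error term $O(q^{m/2}/m)$, to reach the targeted $O(q^n)$ estimate and thereby conclude $\deg \ell_f(n) \sim \deg L_f(n)$.
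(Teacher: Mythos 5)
Your reduction is sound and essentially matches the paper's: writing $\deg L_f(n)-\deg\ell_f(n)=\sum_P(\beta_P(n)-1)^+\deg P$ (note you call this exponent $\alpha_P(n)$, but it is the maximum $\max_{Q}v_P(f(Q))$, which the paper denotes $\beta_P(n)$; $\alpha_P(n)$ is reserved for the sum), bounding it by $\sum_{Q\in M_n}\bigl(\deg f(Q)-\deg\mathrm{rad}\,f(Q)\bigr)$, and disposing of the primes with $k\deg P\le n$ by Hensel plus Chebotarev/Weil is all correct and is in effect what the paper does for $\deg P\le n$ via (\ref{eq: bound small primes}).

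The obstacle you flag at the end, however, is a genuine gap, and it is not one that can be closed by ``equidistribution of Hensel lifts'' or a sharper use of the Weil error term. What you need is precisely the statement that
\[
\bigl|\{Q\in M_n:\ \exists P\ \mathrm{prime},\ \deg P>n/2,\ P^2\mid f(Q)\}\bigr|=o(q^n),
\]
and this is a theorem of Poonen (\cite[Lemma 7.1]{Poo03}, refined by Lando and Carmon) on squarefree values of polynomials over $\F_q[T]$, proved by a characteristic-$p$ argument involving differentiation with respect to $T$ --- not by point counting on the curve $f(T,X)=0$. The paper explicitly identifies this as \emph{the} key ingredient and notes that it is unavailable over $\Z$: the integer analogue of Theorem \ref{thm:rad} is open precisely because one cannot control large primes $p$ with $p^2\mid f(m)$ without the ABC conjecture (see Remark \ref{rem: ABC}). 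So no elementary manipulation of the bounds you list will reach even $o(nq^n)$, let alone $O(q^n)$. Once Poonen's result is invoked, your framework closes easily: the primes with $\deg P>n$ and $\beta_P(n)\ge 2$ contribute at most $\sum_{Q\in S}\deg f(Q)\ll n|S|$ with $S$ the exceptional set above, giving $o(nq^n)$, which suffices since $\deg L_f(n)\gg nq^n$ by Theorem \ref{lower bound}. (Your stronger target $O(q^n)$ for the average squarefull part is not needed and is not what the paper proves.)
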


The proof of \Cref{thm:rad} will be given in \Cref{sec: rad}. As a consequence, our lower bounds for $L_f(n)$ apply also to $\ell_f(n)$. The analogous statement over $\Z$ is not known and the best lower bound over $\Z$, namely (\ref{eq:sah_lower_bound_rad}), has a smaller constant (if $d>2$) than the best known lower bound for $L_f(N)$ given by (\ref{eq:sah_lower_bound}). The key ingredient in the proof which is unavailable over $\Z$ is the work of Poonen \cite{Poo03} on squarefree values of polynomials over $\F_q[T]$ (later generalized by Lando \cite{Lan15} and Carmon \cite{carmon2021square}). If one assumes the ABC conjecture, then the analogous statement $\log\ell_f(N)\sim\log L_f(N)$ can be proven over $\Z$ (see Remark \ref{rem: ABC} below).

For a class of polynomials $f\in\F_q[T][X]$ we call \emph{special} (first introduced by Leumi in \cite{leumi2021lcm}) it is possible to establish Conjecture \ref{the conjecture} in full. This class includes all quadratic polynomials, but also many polynomials of higher degree. We now define special polynomials over an arbitrary unique factorization domain (UFD). This definition was introduced in \cite{leumi2021lcm}.

\theoremstyle{definition}
\begin{defn}
    \label{special defention}
    A polynomial $f \in R[X]$ of degree $d=\deg f\ge 2$ is called \emph{special} in if the bivariate polynomial $f(X) - f(Y)$ factors into a product of linear terms in $R[X,Y]$:
    $$f(X)-f(Y)=\prod_{i=1}^d(a_iX+b_iY+c_i),\quad a_i,b_i,c_i\in R.$$
\end{defn}

\begin{ex} \label{ex: special}\begin{enumerate}
    \item[(i)] A quadratic polynomial is always special because
    $$AX^2+BX+C-(AY^2+BY+C)=(X-Y)(AX+AY+B).$$
    \item[(ii)] If $R=\F_p$ then $f=X^p$ is special because
    $$X^p-Y^p=\prod_{a\in\F_p}(X-aY).$$
\end{enumerate}

\end{ex}

For a special polynomial $f$ \Cref{the conjecture} can be established in full.

\begin{thrm}
    \label{special = conjecture}
    If $f\in\F_q[T][X]$ is irreducible and special then Conjecture \ref{the conjecture} holds for $f$.
\end{thrm}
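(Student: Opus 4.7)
By Theorem \ref{Upper bound} we already have $\deg L_f(n) \le c_f(d-1)nq^n + o(nq^n)$, so only the matching lower bound requires work. My plan is to exploit the factorization $f(X) - f(Y) = \prod_{i=1}^d L_i(X,Y)$, with each $L_i = a_i X + b_i Y + c_i \in \A[X,Y]$, to pin down exactly when a prime $P$ can divide $f(Q)$ for several distinct $Q \in M_n$. Call a factor $L_i$ \emph{trivial} if $a_i + b_i = 0$ in $\A$, equivalently if $L_i$ is a scalar multiple of $X - Y - g$ for some $g \in V_f$; the remaining factors are \emph{non-trivial}. Translations by $V_f$ preserve $f$-values and account for unavoidable overlaps; the task is to show that no other overlaps occur once $\deg P$ is large.

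The main technical step is the following Key Lemma: there exists a constant $C = C(f)$ such that, for all $n$ sufficiently large and every prime $P \in \A$ with $\deg P > n + C$, the set $\{Q \in M_n : P \mid f(Q)\}$ is either empty or equal to a single $V_f$-coset in $M_n$. I would prove it by contradiction: suppose $Q_1, Q_2 \in M_n$ both satisfy $P \mid f(Q_i)$ and $Q_2 - Q_1 \notin V_f$. Then $P \mid f(Q_1) - f(Q_2) = \prod_i L_i(Q_1,Q_2)$, so $P \mid L_i(Q_1,Q_2)$ for some $i$. If $L_i = c(X - Y - g)$ is trivial, then $L_i(Q_1, Q_2) = c(Q_1 - Q_2 - g) \neq 0$ (since $Q_2 - Q_1 \neq g$, as $g \in V_f$ and $Q_2 - Q_1 \notin V_f$), with degree at most $n + O(1)$. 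If $L_i$ is non-trivial, then solving $L_i(Q_1,Q_2) = 0$ for $Q_2 = -b_i^{-1}(a_i Q_1 + c_i) \in \F_q(T)$ shows that $Q_2$ being monic of degree $n$ would require both $\deg a_i = \deg b_i$ and $\mathrm{lead}_T(a_i) + \mathrm{lead}_T(b_i) = 0$, which together characterize trivial factors; hence no such $Q_2$ lies in $M_n$, so $L_i(Q_1,Q_2) \neq 0$ with degree again at most $n + O(1)$. Either way $\deg P \le n + O(1)$, contradicting $\deg P > n + C$ once $C$ is chosen large enough in terms of $\max_i(\deg a_i + \deg b_i + \deg c_i)$.

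Granting the Key Lemma, set $N_P := \#\{Q \in M_n: P \mid f(Q)\}$; the lemma implies $N_P \in \{0, |V_f|\}$ whenever $\deg P > n + C$, so
\[
\deg L_f(n) \;\ge\; \sum_{\substack{\deg P > n + C \\ N_P \ge 1}} \deg P \;=\; \frac{1}{|V_f|} \sum_{\deg P > n + C} \deg P \cdot N_P.
\]
From the identity $\sum_P \deg P \cdot N_P = \sum_{Q \in M_n} \deg \mathrm{rad}(f(Q))$, together with the squarefree-density input already used in Theorem \ref{thm:rad} (Poonen, as generalized by Lando and Carmon), one obtains $\sum_P \deg P \cdot N_P = dnq^n + o(nq^n)$. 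A standard function-field Chebotarev / prime number theorem estimate, in the spirit of the arguments in Theorem \ref{lower bound}, yields $\sum_{\deg P \le n + C} \deg P \cdot N_P = nq^n + o(nq^n)$; subtracting gives $\sum_{\deg P > n + C} \deg P \cdot N_P = (d-1)nq^n + o(nq^n)$, and dividing by $|V_f|$ produces $\deg L_f(n) \ge c_f(d-1)nq^n + o(nq^n)$, matching the upper bound.

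The principal obstacle is the Key Lemma, especially the analysis of non-trivial factors $L_i$ whose coefficients $a_i, b_i \in \A$ have positive degree in $T$. The leading-coefficient argument above is cleanest when $a_i, b_i \in \F_q^\times$; in the general case I would appeal to the classification of special polynomials promised in the abstract to reduce the problem to an explicit finite list of factorization patterns. A secondary obstacle is making the Chebotarev estimate $\sum_{\deg P \le n+C} \deg P \cdot N_P = nq^n + o(nq^n)$ fully quantitative for irreducible $f$; this should follow from the function-field prime number theorem and Chebotarev density theorem applied to the splitting field of $f$, paralleling the arguments used in the proof of Theorem \ref{lower bound}.
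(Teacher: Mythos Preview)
Your Key Lemma is exactly the paper's statement that $S_f(n)=0$ for large $n$, and the overall architecture matches: show that a large prime cannot divide two values $f(Q_1),f(Q_2)$ with $Q_1\not\sim Q_2$, then deduce the asymptotic. So the proposal is on the right track, but two points are worth flagging.

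\textbf{The non-trivial factor case.} Your leading-coefficient argument does not by itself show that $\deg_T a_i=\deg_T b_i$ together with $\mathrm{lead}_T(a_i)+\mathrm{lead}_T(b_i)=0$ forces $a_i+b_i=0$; in general $a_i=T+1,\,b_i=-T$ satisfies both conditions. What saves you is the elementary observation (Lemma~\ref{lem: zeta} in the paper) that any linear factor $a_iX+b_iY+c_i$ of $f(X)-f(Y)$ has $-a_i/b_i=\zeta_i\in\F_q$ with $\zeta_i^d=1$; once $a_i=-\zeta_ib_i$ with $\zeta_i\in\F_q$, your leading-coefficient condition \emph{does} force $\zeta_i=1$. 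This two-line lemma is all you need; invoking the full classification of special polynomials is unnecessary. The paper actually sidesteps the case split altogether: if $\deg P>n+\deg f_d$ and $P\mid L_i(Q_1,Q_2)$ then by degree comparison $L_i(Q_1,Q_2)=0$, hence $f(Q_1)=f(Q_2)$, and Proposition~\ref{prop: fq1=fq2} (with Remark~\ref{remark: Sfn}) rules this out for $Q_1\not\sim Q_2$ once $n$ is large.

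\textbf{Deducing the conjecture from the Key Lemma.} Your route via $\sum_Q\deg\mathrm{rad}(f(Q))$ works, but it genuinely requires Poonen's squarefree-values input to control the large-prime squareful contribution, and a Chebotarev computation for the small primes. The paper instead packages everything in Proposition~\ref{Eqiv to conjecture}, which shows directly that $S_f(n)=o(q^n)$ is equivalent to Conjecture~\ref{the conjecture}; its proof recycles the upper-bound calculation (Proposition~\ref{small P} and the surrounding estimates) and needs no squarefree-values input at all. So your argument is correct but strictly heavier: you import Poonen where the paper does not.
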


The proof of \Cref{special = conjecture} will be given in section \Cref{sec: special asym}.

\begin{ex} The polynomial $f=X^p-T\in\F_p[T][X]$ is irreducible (since it is linear in $T$) and special (similarly to \Cref{ex: special}(ii)). Hence \Cref{the conjecture} holds for it.\end{ex}

We fully classify the set of special polynomials over an arbitrary UFD $R$.

\begin{thrm}
    \label{clasifaction of spesial polinomials}
    Let $R$ be a UFD, $K$ its field of fractions and $p=\mathrm{char}(K)$. 
    \begin{enumerate}
        \item[(i)] Assume $p=0$. Then $f\in R[X]$ is special iff it is of the form $$f(X)=f_d(X+A)^d+C,\quad 0\neq f_d\in R,\quad A,C\in K,$$
        where $d\ge 2$ is such that there exists a primitive $d$-th root of unity in $K$.

        \item[(ii)]
        Assume $p>0$. Then $f\in R[X]$ of degree $\deg f=d=p^l m\ge 2 , (m,p) = 1$ is special iff it is of the form  \[f(X) = f_d \prod_{i=1}^{p^v} (X - b_i + A)^{m p^{l-v}} + C,\] $$0\neq f_d\in R,\quad A, C \in K,\quad 0 \leq v \leq l,\quad \zeta\in K,\quad V = \{b_1,...,b_{p^v}\} \subset K,$$ where $\zeta$ is a primitive $m$-th root of unity and $V$ is an $\mathbb{F}_p(\zeta)$-linear subspace of $K$ with $|V|=p^v$.
   
    \end{enumerate}
\end{thrm}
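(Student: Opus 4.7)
The proof proceeds via the group of affine symmetries of $f$. Set
$$G_f := \{\pi \in \mathrm{Aff}(K) : f \circ \pi = f\},$$
where $\mathrm{Aff}(K)$ is the group of affine transformations $y \mapsto \alpha y + \beta$ of $K$; by Gauss's lemma it suffices to work with factorizations in $K[X,Y]$ rather than $R[X,Y]$. The first key step is to show that $f$ is special iff
$$f(X) - f(Y) = f_d \prod_{\pi \in G_f}\bigl(X - \pi(Y)\bigr)^{d/|G_f|}.$$
Any linear factor $aX + bY + c$ of $f(X) - f(Y)$ must have $a \neq 0$ (otherwise $f$ would be constant), so by the remainder theorem it corresponds to $\pi(Y) = -(b/a)Y - c/a \in G_f$; conversely each $\pi \in G_f$ contributes a factor $X - \pi(Y)$. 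The substitution $Y \mapsto \pi_0(Y)$ for any $\pi_0 \in G_f$ fixes $f(X) - f(Y)$ but permutes the factors $X - \pi(Y) \mapsto X - (\pi \circ \pi_0)(Y)$ by right translation, forcing all factors to occur with equal multiplicity $d/|G_f|$.

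The second step is classifying the finite group $G_f \subset \mathrm{Aff}(K)$. The translation subgroup $W := G_f \cap K$ is a finite subgroup of $(K,+)$, hence an $\F_p$-subspace of $K$ (and $W = \{0\}$ in characteristic $0$, as $(K,+)$ is torsion-free there). The quotient $G_f / W$ embeds into $K^*$ as a finite subgroup, so it is cyclic of order $n$ coprime to $p$. Writing $|W| = p^v$, we have $\gcd(p^v, n) = 1$, so Schur--Zassenhaus yields $G_f = W \rtimes C$ for some cyclic $C \le G_f$ of order $n$. Since $p \nmid n$, averaging $y_0 := n^{-1}\sum_{\gamma \in C}\gamma(0)$ produces a $K$-rational fixed point of $C$; shifting by $A := -y_0$ identifies $C$ with pure scalings $y \mapsto \zeta_C y$ for a primitive $n$-th root of unity $\zeta_C \in K^*$, and the semi-direct product structure forces $\zeta_C W = W$, making $W$ an $\F_p(\zeta_C)$-subspace. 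Substituting $Y = 0$ in the factorization (in the shifted coordinate) gives $f(X) - f(0) = f_d L_W(X)^{nk}$ with $L_W(X) := \prod_{w \in W}(X - w)$; combined with $p^v n k = d = m p^l$ this yields $nk = m p^{l-v}$, and undoing the shift produces the claimed form $f(X) = f_d L_W(X + A)^{m p^{l-v}} + C_{\mathrm{const}}$ with $V := W$ of size $p^v$ and $0 \le v \le l$.

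Finally I identify $\zeta$ as a primitive $m$-th root of unity and prove the converse direction. Using Frobenius and splitting by $m$-th roots of unity,
$$f(X) - f(Y) = f_d\bigl(L_V(X+A)^m - L_V(Y+A)^m\bigr)^{p^{l-v}} = f_d \prod_{j=0}^{m-1}\bigl(L_V(X+A) - \zeta^j L_V(Y+A)\bigr)^{p^{l-v}}.$$
The Frobenius $x \mapsto x^{p^v}$ is a bijection on the finite field $\F_p(\zeta)$, so there exist $\eta_j \in \F_p(\zeta)$ with $\eta_j^{p^v} = \zeta^j$. Assuming $\zeta \in K$ and $V$ is $\F_p(\zeta)$-linear, the identity $\zeta^j L_V(u) = L_V(\eta_j u)$ together with the additivity of $L_V$ (a standard property of subspace polynomials of $\F_p$-subspaces) yields
$$L_V(X+A) - \zeta^j L_V(Y+A) = L_V\bigl(X - \eta_j Y + (1 - \eta_j)A\bigr) = \prod_{b \in V}\bigl(X - \eta_j Y + (1-\eta_j)A - b\bigr),$$
a product of linear factors over $K$; this proves the ``if'' direction. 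For the ``only if'' direction, reading backwards: if $f$ of the derived form is special, each intermediate factor must lie in $K[X,Y]$ and further split into linear factors, which forces $\zeta \in K$ (so all $\zeta^j \in K$) and $V$ to be $\eta_j$-invariant for all $j$ (so the additive factorization applies), hence $V$ is $\F_p(\zeta)$-linear. The characteristic-$0$ case is the specialization $l = v = 0$, $W = \{0\}$, $m = d$, reducing the form to $f(X) = f_d(X + A)^d + C$ with $\zeta \in K$ a primitive $d$-th root of unity. The main obstacle is the second step: obtaining the clean decomposition $G_f = W \rtimes C$ with a $K$-rational fixed point of $C$ rests on Schur--Zassenhaus and averaging, both requiring the coprimality $\gcd(n, p) = 1$; once this structure is in place, the remaining computations are essentially direct.
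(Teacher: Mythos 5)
Your proof is correct in its essentials but takes a genuinely different route from the paper. The paper never introduces a symmetry group: it works directly with the factorization $f(X)-f(Y)=f_d\prod_{i,j}(X-\zeta^jY-b_i^{(j)})$, normalizes by an explicit shift $w=b_1^{(m-1)}/(1-\zeta^{-1})$ so that $f(\zeta X)=f(X)$, and then shows by a unique-factorization argument that the multisets $S_j=\{b_i^{(j)}\}_i$ coincide for all $j$ and that each element occurs with the same multiplicity $p^{l-v}$. Your reformulation via the affine symmetry group $G_f$ and the identity $f(X)-f(Y)=f_d\prod_{\pi\in G_f}(X-\pi(Y))^{d/|G_f|}$ packages those multiset manipulations into the single statement that right translation permutes the factors; the structure theory (translation subgroup $W$, cyclic prime-to-$p$ quotient, Schur--Zassenhaus splitting, averaging to find a rational fixed point) then replaces the paper's ad hoc shift and its properties (1)--(4) of $S_1$. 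This is arguably more conceptual and makes the normal form transparent, at the modest cost of invoking Schur--Zassenhaus where the paper needs nothing beyond unique factorization; your converse direction (Frobenius, the subspace polynomial $L_V$ and its semilinearity $L_V(\eta u)=\eta^{p^v}L_V(u)$) is essentially the paper's Proposition 7.6.

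One step needs tightening. Your structure theory only yields that the image of $G_f$ in $K^*$ is cyclic of some order $n$ with $n\mid m$, hence that $W$ is $\F_p(\zeta_C)$-linear for a primitive $n$-th root of unity $\zeta_C$; the theorem requires $n=m$, i.e.\ that $K$ contains a primitive $m$-th root of unity and that $V$ is stable under all of it. You announce this identification but the ``reading backwards'' paragraph does not actually establish it in the forward direction (it argues about a form you have not yet fully derived). The clean fix is the paper's Lemma 7.2: compare leading homogeneous forms. The degree-$d$ part of your product formula is $f_d\prod_{\pi}(X-\alpha_\pi Y)^{d/|G_f|}$ with $n$ distinct slopes, while the degree-$d$ part of $f(X)-f(Y)$ is $f_d(X^d-Y^d)=f_d\prod_{\zeta^m=1}(X-\zeta Y)^{p^l}$ over $\overline{K}$, which has exactly $m$ distinct slopes; unique factorization forces $n=m$ and places all $m$-th roots of unity in $K$ (in $R$, even, by integral closedness). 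With that line added, your argument is complete.
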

The proof of \Cref{clasifaction of spesial polinomials} will be given in \Cref{sec: class special}.

We now briefly discuss how our main 
conjecture and results compare with the work
of Leumi \cite{leumi2021lcm}, which also 
studied the function field analog of 
Cilleruelo's conjecture and influenced the 
present work. First, \cite{leumi2021lcm} 
states Conjecture \ref{the conjecture} 
without the constant $c_f$. This is certainly 
false by Theorem \ref{Upper bound} because it 
can happen that $c_f<1$ (see Example 
\ref{ex:1} below). It seems to have been 
overlooked that when $g\in V_f$ and $Q\in 
M_n,\,n>\deg g$ then since $f(Q+g)=f(Q)$, the 
value $f(Q+g)$ contributes nothing new to the 
LCM on the RHS of (\ref{eq: def Lfn}) over 
the contribution of $f(Q)$. Once one accounts 
for this redundancy with the constant 
$c_f=1/|V_f|$, Cilleruelo's original 
heuristic carries over to the function field 
case giving rise to Conjecture \ref{the 
conjecture}. Second, all results and 
conjectures in \cite{leumi2021lcm} are stated 
only for an absolutely irreducible and 
separable $f\in\F_q[T][X]$. We do not make 
these restrictions here and it takes 
additional technical work to treat the 
general case. Third, the lower bound on 
$L_f(n)$ given in \cite{leumi2021lcm} has a smaller constant than ours 
(thus the bound is weaker), comparable to 
the RHS of (\ref{eq:sah_lower_bound_rad}), and a lower 
bound for the radical of $L_f(n)$ is not 
stated explicitly. Fourth, in 
\cite{leumi2021lcm} our Theorem \ref{special 
= conjecture} is stated without the constant 
$c_f$, which is incorrect in general for the 
same reasons explained above, although the 
arguments therein are essentially correct in 
the case $c_f=1$. Finally, 
\cite{leumi2021lcm} gives a classification of 
special polynomials only in the case $p\nmid d$ (and it is stated only for the ring $\F_q[T]$), whereas we treat the general case.

\textbf{Acknowledgments.} The authors would like to thank Ze\'ev Rudnick and the anonymous referee of this paper for spotting a few small errors in a previous draft of the paper. Both authors were partially supported by Israel Science Foundation grant no. 2507/19.

\section{Preliminaries}\label{sec:prelim}
For background on the arithmetic of function fields see \cite{rosen2002number}. For background on resultants, which we will use below, see \cite[\S IV.8]{Lang2002}.
\subsection{Notation}
We now introduce some notation which will be used throughout Sections \ref{sec:prelim}-\ref{sec: special asym}. Let $p$ be a prime, $q=p^k$. For $Q\in\F_q[T]$ we denote by $|Q|=q^{\deg Q}$ the standard size of $Q$. For $P\in\F_q[T]$ prime and $Q\in\F_q[T]$ we denote by $v_P(Q)$ the exponent of $P$ in the prime factorization of $Q$.

We will always fix a polynomial $$f(X)=\sum_{i=0}^df_iX^i\in\F_q[T][X],\,f_d\neq 0$$ of degree $d$.

We also adopt the following conventions about notation.
\begin{itemize}\item For a polynomial $Q\in\F_q[T]$ we denote by $\deg Q$ its degree in $T$. For a polynomial $g\in\F_q[T][X]$ we denote by $\deg g=\deg_Xg$ its degree in the variable $X$.
\item For a polynomial $g\in\F_q[T][X]$ we denote by $g'=\frac{\partial g}{\partial X}$ its derivative in the variable $X$. The derivative in the variable $T$ will be written explicitly $\frac{\partial g}{\partial T}$.
\item $g\in\F_q[T][X]$ is called separable if it is separable as a polynomial in the variable $X$, equivalently $g\not\in\F_q[T][X^p]$.
\item For two polynomials $g,h\in\F_q[T][X]$ we denote by $\mathrm{Res}(g,h)=\mathrm{Res}_X(g,h)$ their resultant in the variable $X$.
\end{itemize}

\subsection{The space $V_f$}

Recall that $V_f=\left\{g \in \A:\ f\left(X + g\right) = f\left(X\right)\right\}$.  

\begin{lem}
\label{V_f: trivial when degree is not devisable by p} Assume $d\ge 1$.

    \begin{enumerate}
        \item[(i)] $|V_f| \leq d$.
        \item[(ii)] $V_f$ an $\F_p$-linear subspace of $\F_q[T]$
        \item[(iii)] $|V_f|$ is a power of p.
        \item[(iv)] If $p \nmid d$ then $V_f$ is trivial.
        \item[(v)] If $f_d\nmid f_i$ for some $1\le i\le d-1$ then $|V_f|\le d-1$.
    \end{enumerate}
\end{lem}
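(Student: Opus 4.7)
The lemma splits into a routine block (parts (i)--(iv)) and one slightly more substantive step (part (v)). For (ii), closure of $V_f$ under addition is immediate from $f(X+g_1+g_2)=f((X+g_1)+g_2)=f(X+g_1)=f(X)$, and closure under $\F_p$-scalar multiplication follows from the observation that $f(X+ng)=f(X)$ for every $n\in\Z_{\ge 0}$ by induction on $n$, combined with $pg=0$ in characteristic $p$. For (i), one views $\Phi(X,Y):=f(X+Y)-f(X)$ as a polynomial in $Y$ of degree $d$ with leading coefficient $f_d\neq 0$ and coefficients in the integral domain $\F_q[T][X]$; every element of $V_f$ is a root of $\Phi$ in $Y$, so $|V_f|\le d$. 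Part (iii) is then immediate since a finite $\F_p$-vector space has $p$-power cardinality. For (iv), the coefficient of $X^{d-1}$ in $f(X+g)-f(X)$ is $df_dg$, and requiring this to vanish forces $g=0$ whenever $p\nmid d$.

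For (v) I would argue by contrapositive: if $|V_f|=d$, then $f_d\mid f_i$ for every $1\le i\le d-1$. Under the hypothesis $|V_f|=d$, the degree-$d$ polynomial $\Phi(X,Y)\in\F_q(T)(X)[Y]$ already has all $d$ of its roots in $\F_q[T]\subset\F_q(T)(X)$, so it splits completely:
\[
\Phi(X,Y)=f_d\prod_{g\in V_f}(Y-g).
\]
Specializing $X=0$ yields the identity $f(Y)-f_0=f_d\prod_{g\in V_f}(Y-g)$ in $\F_q[T][Y]$. Expanding the product as $\sum_{i=0}^{d}(-1)^{d-i}f_d\,e_{d-i}(V_f)\,Y^i$, where $e_k$ denotes the $k$-th elementary symmetric polynomial, and matching with the left-hand side gives $f_i=(-1)^{d-i}f_d\,e_{d-i}(V_f)$ for $1\le i\le d-1$. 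Since $V_f\subset\F_q[T]$, each $e_{d-i}(V_f)$ lies in $\F_q[T]$, so $f_d\mid f_i$, as required.

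The only non-bookkeeping idea in the argument is the one used for (v): once $|V_f|$ saturates the bound from (i), the difference polynomial $\Phi(X,Y)$ is forced to split completely in $Y$ over $\F_q[T]$, and a single specialization at $X=0$ converts this splitting into the desired divisibility constraints on the coefficients of $f$. The remaining parts amount to elementary coefficient comparison and linear algebra.
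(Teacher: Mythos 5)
Your proof is correct and follows essentially the same route as the paper's: parts (ii)--(iv) are identical in substance, and for (i) and (v) you work with the difference polynomial $f(X+Y)-f(X)$ in $Y$ before specializing at $X=0$, whereas the paper specializes immediately and argues with $f(Y)-f(0)$ --- the underlying idea (a degree-$d$ polynomial has at most $d$ roots, and when $|V_f|=d$ it splits as $f_d\prod_{g\in V_f}(Y-g)$, forcing $f_d\mid f_i$) is the same. No gaps.
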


\begin{proof}
    $ $
    \begin{enumerate}
        \item[(i)] Assume by way of contradiction that $|V_f| \geq d + 1$. Then $f(g)=f(0)$ for every $g\in V_f$. Since $f(x) - f(0)$ has at most $d$ roots, $f\left(x\right) = f\left(0\right)$ and $f$ is constant, a contradiction.
        \item[(ii)] It is obvious that $0 \in V_f$. Now let $a, b \in V_f$ and let us prove $\alpha a + \beta b \in V_f$ where $\alpha, \beta \in \{0, 1, ..., p - 1\}$. Recursively applying $f(X+a)=f(X+b)=f(X)$ we obtain
        \[f\left(X + \alpha a + \beta b\right) = f\left(X + \sum_{i=1}^{\alpha} a + \sum_{i=1}^{\beta} b\right) = f\left(X\right).\]
        \item[(iii)] Obvious from (i) and (ii).
        \item[(iv)] Let $g\in V_f$, so $f(X+g)=f(X)$. Comparing coefficients at $X^{d-1}$ we find $dg=0$. If $p\nmid d$ then $g=0$, so in this case $V_f=\{0\}$ and the claim follows.
        \item[(v)] If $|V_f|=d$ then since all elements $g\in V_f$ are roots of $f(X)-f(0)$, we have $f=f_d\prod_{g\in V_f}(X-g)+f(0)$ and $f_d|f_i$ for all $1\le i\le d-1$, contradicting the assumption.
    \end{enumerate}
\end{proof}

\begin{ex}
\label{ex:1}
    (Example of a polynomial with $|V_f| = d=\deg f$) Let $V = \{b_1,...,b_{p^v}\}\subset\F_q[T]$ be an $\F_p$-linear subspace and $C\in\F_q[T]$. Then the polynomial $$f(X)=\prod_{i=1}^{p^v}(X-b_i)+C$$ has $V_f = V$. Thus $|V_f| = |V| = p^v = d$.
\end{ex}

\subsection{Roots of $f$ modulo prime powers}

In this subsection we study the quantity
  \[\p{k} := \left|\{Q \bmod P^k: f(Q) \equiv 0 \bmod P^k\}\right|,\]
i.e. the number of roots of $f$ modulo a prime power $P^k$.

\begin{lem}
\label{shared roots mod P}
    Let $g, h \in \AX$ be polynomials and let $P\in\F_q[T]$ prime. If $g, h$ have a common root modulo $P^m$ then $P^m \mid R := \mathrm{Res}(g, h)$.
\end{lem}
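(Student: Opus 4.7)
The plan is to use the classical B\'ezout-type identity for the resultant: there exist polynomials $A,B\in\F_q[T][X]$ such that
\[A(X)g(X)+B(X)h(X)=\mathrm{Res}(g,h).\]
This identity is valid over any commutative ring because it arises from row/column operations on the Sylvester matrix (equivalently, it is a consequence of Cramer's rule applied to the system of equations whose determinant is the Sylvester determinant), so it is available for polynomials in $\F_q[T][X]$ without requiring us to pass to the fraction field.

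Once this identity is in hand, the argument is short. Let $Q\in\F_q[T]$ satisfy $g(Q)\equiv h(Q)\equiv 0\pmod{P^m}$. Substituting $X=Q$ into the identity above yields
\[R=\mathrm{Res}(g,h)=A(Q)g(Q)+B(Q)h(Q)\equiv 0\pmod{P^m},\]
because both summands on the right lie in $P^m\F_q[T]$. Since $R\in\F_q[T]$ does not depend on $X$, this congruence gives the desired divisibility $P^m\mid R$.

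The only thing worth being careful about is the existence of the B\'ezout identity in the coefficient ring $\F_q[T]$, where $g$ or $h$ could have a leading coefficient that is not a unit. The standard proof via cofactor expansion of the Sylvester matrix is purely formal and gives $A$ and $B$ with $\deg_X A<\deg h$ and $\deg_X B<\deg g$, and coefficients that are polynomial expressions in the coefficients of $g$ and $h$; this is exactly the content of \cite[\S IV.8]{Lang2002}. I do not anticipate any genuine obstacle; the lemma is a formal consequence of this identity together with the fact that $\mathrm{Res}(g,h)$ is free of the variable $X$.
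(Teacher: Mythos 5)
Your proof is correct and follows exactly the same route as the paper: both use the B\'ezout identity $A(X)g(X)+B(X)h(X)=\mathrm{Res}(g,h)$ with $A,B\in\F_q[T][X]$ (citing the same reference) and substitute the common root to conclude $P^m\mid R$. Your added remark on why the identity holds over the coefficient ring $\F_q[T]$ itself, rather than its fraction field, is a worthwhile point of care but does not change the argument.
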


\begin{proof} 
    We can express $R$ as $R = a(X)f(X) + b(X)g(X)$ for some $a(X), b(X) \in \AX$ (see \cite[\S IV.8]{Lang2002}). Therefore, if there exists a $Q \in \A$ such that $P^m \mid f(Q)$ and $P^m \mid g(Q)$, then $P^m$ must also divide $a(Q)f(Q) + b(Q)g(Q) = R$.
\end{proof}

The proof of the next two lemmas is similar to the analogous proof for the integer case in \cite[proof of Theorem II]{TrygveNagel1921}.

\begin{lem} \label{beta for Hensel}Assume $f$ is separable and denote $R := \mathrm{Res}_X(f, f')\neq 0$. Denote $\mu=v_P(R)$. Let $x_0,x_1\in\F_q[T]$ be such that the following conditions hold:
\begin{enumerate}
    \item[(a)] $f(x_0)\equiv 0\pmod{P^{\mu+1}}$.
    \item[(b)] $f(x_1)\equiv 0\pmod{P^{\beta+1}}$, where $\beta:=v_P(f'(x_0))$.
    \item[(c)] $x_1 \equiv x_0 \mod P^{\mu + 1}$.
\end{enumerate}
Then   
     $\beta \leq \mu$ and $v_P(f'(x_1))=\beta$.
\end{lem}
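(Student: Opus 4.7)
The statement has two conclusions: the bound $\beta\le\mu$ and the equality $v_P(f'(x_1))=\beta$. I would prove them separately, and neither part appears to use hypothesis (b) of the lemma, which is presumably kept in the statement for use in a subsequent Hensel-type iteration.

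For the first part, the plan is to use Lemma \ref{shared roots mod P} applied to $g=f$ and $h=f'$. By hypothesis (a), $f(x_0)\equiv 0\pmod{P^{\mu+1}}$, so if $f'(x_0)\equiv 0\pmod{P^{\mu+1}}$ as well, then $x_0$ would be a common root of $f$ and $f'$ modulo $P^{\mu+1}$, forcing $P^{\mu+1}\mid R$ by Lemma \ref{shared roots mod P}. This contradicts $v_P(R)=\mu$. Hence $v_P(f'(x_0))=\beta\le\mu$. Note that separability of $f$ is what ensures $R\ne 0$ so that $\mu$ is finite.

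For the second part, the key observation is the standard algebraic identity: for any polynomial $g\in\F_q[T][X]$ and any $a,b\in\F_q[T]$, we have $(a-b)\mid\bigl(g(a)-g(b)\bigr)$ in $\F_q[T]$. Applying this with $g=f'$, $a=x_1$, $b=x_0$, and combining with hypothesis (c), we get
\[
v_P\bigl(f'(x_1)-f'(x_0)\bigr)\ \ge\ v_P(x_1-x_0)\ \ge\ \mu+1.
\]
By the first part, $v_P(f'(x_0))=\beta\le\mu<\mu+1$. Writing $f'(x_1)=f'(x_0)+\bigl(f'(x_1)-f'(x_0)\bigr)$ and using the ultrametric property of $v_P$ (a sum of two terms of unequal valuation has valuation equal to the smaller one), we conclude $v_P(f'(x_1))=\beta$.

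There is no serious obstacle here: the only point requiring care is verifying that $R\neq 0$ (which is why separability of $f$ is assumed) so that $\mu=v_P(R)$ is well-defined, and that the divisibility identity $(a-b)\mid(g(a)-g(b))$ is available in $\F_q[T]$, which is immediate since it is a formal polynomial identity valid over any commutative ring. Thus the lemma follows in a few lines once one invokes Lemma \ref{shared roots mod P}.
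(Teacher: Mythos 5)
Your proof is correct and follows essentially the same route as the paper: the bound $\beta\le\mu$ via Lemma \ref{shared roots mod P} is identical, and your ultrametric argument for $v_P(f'(x_1))=\beta$ is just a repackaging of the paper's step of writing $x_1=x_0+tP^{\mu+1}$ and noting $f'(x_1)\equiv f'(x_0)\pmod{P^{\mu+1}}$. Your observation that hypothesis (b) is not actually needed for the conclusion (only the definition of $\beta$ contained in it) is also accurate; it is retained for the subsequent lifting lemma.
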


\begin{proof}
    Since $f(x_0) \equiv 0 \mod P^{\mu +1}$ and $P^{\mu + 1} \nmid R$, by Lemma \ref{shared roots mod P} we must have $P^{\mu + 1} \nmid f'(x_0)$. Hence $\beta \leq \mu$.
Now writing $x_1=x_0+tP^{\mu+1},\,t\in\F_q[T]$ and using $\beta\le\mu$ and conditions (a),(b) we have
    \[f'(x_1) = f'(x_0 + tP^{\mu + 1}) \equiv f'(x_0) \equiv 0 \pmod {P^{\beta}},\]
    \[f'(x_1) = f'(x_0 + tP^{\mu + 1}) \equiv f'(x_0) \not\equiv 0 \pmod{P^{\beta + 1}},\]
so $v_P(f'(x_1))=\beta$.
\end{proof}

\begin{lem}
    \label{Lift root seprable}
    In the setup of Lemma \ref{beta for Hensel} let $\alpha>\mu$ be an integer and assume that in fact $f(x_1)\equiv 0\pmod{P^{\alpha+\beta}}$. Consider the set
    $$S_1 := \left\{ x_1 + u P^{\alpha} \mid u \in \A / P^{\beta}\right\}\subset\F_q[T]/P^{\alpha+\beta}.$$ Then
    \begin{enumerate}
    \item[(i)] The elements of $S_1$ are roots of $f$ modulo $P^{\alpha+\beta}$.
    \item[(ii)] The number of roots of $f$ modulo $P^{\alpha+\beta + 1}$ that reduce modulo $P^{\alpha+\beta}$ to an element of $S_1$ is equal to $|S_1|=q^{\beta \deg P}$.
    \end{enumerate}
\end{lem}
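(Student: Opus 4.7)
The plan is to invoke the Taylor-type identity
$$f(X+Y) = f(X) + f'(X) Y + Y^2 h(X,Y)$$
for some $h\in\F_q[T][X,Y]$, valid in any commutative ring. The inequality $\alpha>\mu\ge\beta$ (the second part coming from \Cref{beta for Hensel}) will be used repeatedly; in particular $2\alpha\ge\alpha+\beta+1$.

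For (i), I substitute $Y=uP^\alpha$ with $u\in\A$. The three summands have $v_P$ at least $\alpha+\beta$ (from the hypothesis $f(x_1)\equiv 0\pmod{P^{\alpha+\beta}}$), $\alpha+\beta$ (since $v_P(f'(x_1))=\beta$), and $2\alpha\ge\alpha+\beta$, respectively, so $f(x_1+uP^\alpha)\equiv 0\pmod{P^{\alpha+\beta}}$. The residues $u\in\A/P^\beta$ yield distinct elements modulo $P^{\alpha+\beta}$, because $(u_1-u_2)P^\alpha\equiv 0\pmod{P^{\alpha+\beta}}$ forces $u_1\equiv u_2\pmod{P^\beta}$. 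This gives $|S_1|=q^{\beta\deg P}$ and proves (i).

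For (ii), every $y\in\A/P^{\alpha+\beta+1}$ reducing modulo $P^{\alpha+\beta}$ into $S_1$ admits a unique representation $y=x_1+uP^\alpha+vP^{\alpha+\beta}$ with $(u,v)\in\A/P^\beta\times\A/P$. Applying the Taylor identity with $Y=uP^\alpha+vP^{\alpha+\beta}$ and using $v_P(Y^2h(x_1,Y))\ge 2\alpha\ge\alpha+\beta+1$ yields
$$f(y)\equiv f(x_1)+f'(x_1)uP^\alpha+f'(x_1)vP^{\alpha+\beta}\pmod{P^{\alpha+\beta+1}}.$$
Dividing by $P^{\alpha+\beta}$ and reducing modulo $P$ converts the root condition into a linear congruence in $(u\bmod P,\,v\bmod P)$, whose coefficient $e:=P^{-\beta}f'(x_1)$ on $u$ is a unit mod $P$. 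When $\beta\ge 1$ the $v$-term vanishes modulo $P$ and the congruence pins down $u\bmod P$ while leaving $v$ free, yielding $q^{(\beta-1)\deg P}\cdot q^{\deg P}=q^{\beta\deg P}$ admissible pairs. When $\beta=0$ the variable $u$ is absent (the factor $\A/P^\beta$ is the zero ring) and the unit coefficient $e=f'(x_1)$ pins down $v\bmod P$, giving exactly $1=q^{0\cdot\deg P}$ solution. In both cases the count matches $|S_1|$.

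The main obstacle is purely organizational: the linear congruence in $(u,v)$ behaves differently according to whether $\beta=0$ (so only $v$ is a genuine variable) or $\beta\ge 1$ (where the $v$-coefficient vanishes modulo $P$), necessitating a brief case split to verify the count uniformly. The rest of the argument is standard Hensel lifting over $\A$, adapted to allow a positive value of $\beta$.
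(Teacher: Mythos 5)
Your proof is correct and follows essentially the same route as the paper: expand $f(x_1+Y)$ to first order with a $Y^2$ remainder, use $\alpha>\mu\ge\beta$ to kill the remainder modulo $P^{\alpha+\beta}$ (resp.\ $P^{\alpha+\beta+1}$), and reduce (ii) to a linear congruence in $(u,v)$ modulo $P$ with unit coefficient $f'(x_1)/P^{\beta}$. Your explicit case split on $\beta=0$ versus $\beta\ge 1$ and your verification that the elements of $S_1$ are distinct are slightly more careful than the paper's counting, but the argument is the same.
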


\begin{proof}
    To prove (i) we note that
    \[f(x_1 + uP^{\alpha}) \equiv f(x_1) + uP^{\alpha}f'(x_1) \pmod{P^{2\alpha}}.\]
    Thus $P^{\alpha+\beta} \mid f(x_1 + uP^{\alpha})$ as $P^{\alpha+\beta} \mid f(x_1)$ and by Lemma \ref{beta for Hensel} we have $P^{\beta} \mid f'(x_1)$ and $\alpha+\beta \le\alpha+\mu< 2\alpha$.

    To show (ii), consider the set of possible lifts from $S_1$ to $\F_q[T]/P^{\alpha+\beta+1}$, i.e. \[S_2 := \left\{x_1 + u P^{\alpha} + v P^{\alpha + \beta} \mid u \in \A / P^{\beta}, v \in \A/P\right\}.\]

   We will now determine for which 
   $u,v$ the element 
   $x_1+uP^\alpha+vP^{\alpha+\beta}$ 
   is a root of $f$ modulo 
   $P^{\alpha+\beta+1}$. Using 
   $2\alpha>\alpha+\beta$ we have
    \[f(x_1 + u P^{\alpha} + v 
    P^{\alpha + \beta}) \equiv  
    f(x_1) + u P^{\alpha} f'(x_1)+vP^{\alpha+\beta}f'(x_1) 
    \pmod{ P^{\alpha + \beta + 1}}.\]
    Hence $x_1 + u P^{\alpha} + v 
    P^{\alpha + \beta}$ is a root of 
    $f\bmod P^{\alpha + \beta + 1}$ iff \begin{equation}\label{eq: condition for hensel}f(x_1) + u P^{\alpha} 
    f'(x_1)+vP^{\alpha+\beta}f'(x_1) \equiv 0 \pmod {P^{\alpha + 
    \beta + 1}}.\end{equation} As $P^{\alpha + 
    \beta} \mid f(x_1)$ and 
    $v_P(f'(x_1))=\beta$ (Lemma 
    \ref{beta for Hensel}), we have 
    (\ref{eq: condition for hensel}) iff
    \[ u \equiv - \left(\frac{f'(x_1)}{P^{\beta}}\right)^{-1} \left(\frac{f(x_1)}{P^{\alpha + \beta}}+vf'(x_1)\right) \pmod P\]
    Thus we have $|P|$ possible values of $v$ and for each of these $|P^{\beta}|/|P|$ possible values of $u$. Overall we have $|P^\beta|=q^{\beta\deg P}=|S_1|$ possible values of $(u,v)$ and the assertion follows.
\end{proof}

\begin{lem}
    \label{Hensel for seprable}
    Assume that $f$ is separable, $R=\mathrm{Res}(f, f')\neq 0$. Let $P$ be a prime in $\A$ and $\mu=v_P(R)$. Then $\p{2\mu + k} = \p{2\mu + 1}$ for all $k \geq  1$. 
\end{lem}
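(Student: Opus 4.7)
The plan is to induct on $k\ge 1$ and show the stronger statement that the reduction map from roots of $f$ modulo $P^{2\mu+k+1}$ to roots of $f$ modulo $P^{2\mu+k}$ preserves the count. The key tool is \Cref{Lift root seprable}, which I will apply with $x_1=x^*$ an arbitrary root modulo $P^{2\mu+k}$ and $x_0$ its reduction modulo $P^{\mu+1}$; the setup of \Cref{beta for Hensel} then holds automatically, giving an exponent $\beta=v_P(f'(x_0))\le\mu$.

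Given a root $x^*\in\F_q[T]/P^{2\mu+k}$ of $f$, set $x_0=x^*\bmod P^{\mu+1}$, $\beta=v_P(f'(x_0))$, and $\alpha=2\mu+k-\beta$. Conditions (a)--(c) of \Cref{beta for Hensel} are immediate from $f(x^*)\equiv 0\pmod{P^{2\mu+k}}$ (noting $2\mu+k\ge \beta+1$ since $\beta\le\mu$) and $x^*\equiv x_0\pmod{P^{\mu+1}}$. Since $\beta\le\mu<\mu+k$, we have $\alpha>\mu$ and $\alpha+\beta=2\mu+k$, so \Cref{Lift root seprable} applies: the coset
\[S_1(x^*):=\{x^*+uP^\alpha:u\in\F_q[T]/P^\beta\}\subset\F_q[T]/P^{2\mu+k}\]
consists of roots of $f$ modulo $P^{2\mu+k}$, and precisely $|S_1(x^*)|=q^{\beta\deg P}$ roots of $f$ modulo $P^{2\mu+k+1}$ reduce to elements of $S_1(x^*)$.

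The next step is to verify that as $x^*$ ranges over all roots modulo $P^{2\mu+k}$, the sets $S_1(x^*)$ form a partition of the root set. Because $\alpha>\mu$, any $y^*\in S_1(x^*)$ has the same reduction modulo $P^{\mu+1}$ as $x^*$, hence the same $x_0$ and $\beta$; a direct computation then gives $S_1(y^*)=S_1(x^*)$. Since each root belongs to $S_1$ of itself, the orbits cover everything.

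Summing the lifting identity over the (disjoint) orbits yields $\p{2\mu+k+1}=\p{2\mu+k}$, and induction on $k\ge 1$ completes the proof. The only subtlety is ensuring that $\alpha>\mu$ persists at every stage of the induction, which it does since $\beta\le\mu$ depends only on the orbit; there is no deeper obstacle beyond this bookkeeping.
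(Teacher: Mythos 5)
Your proof is correct and follows essentially the same route as the paper: for each root modulo $P^{2\mu+k}$ you apply \Cref{Lift root seprable} with $\alpha=2\mu+k-\beta$ (using $\beta\le\mu$ from \Cref{beta for Hensel} to get $\alpha>\mu$) and conclude $\p{2\mu+k+1}=\p{2\mu+k}$ by induction. Your explicit verification that the cosets $S_1(x^*)$ partition the root set is a detail the paper leaves implicit, and it is a worthwhile addition.
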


\begin{proof}
    For each root $x_1$ of $f$ modulo $P^{2\mu+k}$ we can apply Lemma \ref{Lift root seprable} with $\alpha=2\mu+k-\beta$, where $\beta=v_P(f'(x_1))$ (the condition $\alpha>\mu$ holds because $\beta\le\mu$ by Lemma \ref{beta for Hensel} applied with $x_0=x_1$) and obtain that the number of roots of $f$ modulo $P^{2\mu+k+1}$ equals the number of roots of $f$ modulo $P^{2\mu+k}$, i.e. $\p{2\mu + k+1} = \p{2\mu + k}$. The assertion now follows by induction on $k$. 
\end{proof}

\begin{lem}
    \label{non zero resultant}
    Assume $f\in\F_q[T][X^p]$ is inseparable and irreducible. Then $U := \mathrm{Res}(f, \frac{\partial f}{\partial T}) \neq 0$.
\end{lem}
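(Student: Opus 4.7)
The plan is to argue by contradiction: assume $U = \mathrm{Res}_X(f, \partial f/\partial T) = 0$ and derive that $f$ must be a $p$-th power in $\F_q[T][X]$, contradicting irreducibility. Set $g := \partial f/\partial T$. Since $f \in \F_q[T][X^p]$, the same is true of $g$, and $\deg_X g \le \deg_X f = d$. Because $f$ has nonzero leading $X$-coefficient $f_d$, the vanishing of $U$ means $f$ and $g$ share a common factor of positive $X$-degree in $\F_q(T)[X]$. By Gauss's lemma $f$ remains irreducible over $\F_q(T)$, so this common factor must be an $\F_q(T)$-scalar multiple of $f$; combined with the degree bound, this forces either $g = 0$ or $g = c f$ for some nonzero $c \in \F_q(T)$.

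Next I would rule out the case $g = cf$ with $c \neq 0$. Write $f = \sum_{i=0}^{d/p} a_i(T) X^{pi}$ (note that $p \mid d$ since $f$ is inseparable), with $a_{d/p} \neq 0$. The identity $g = cf$ becomes $a_i'(T) = c(T)\, a_i(T)$ for every $i$. Writing $c = u/v$ in lowest terms in $\F_q(T)$ and using that $f$ is primitive in $\F_q[T][X]$ (Gauss), clearing denominators shows $v$ divides the gcd of the $a_i$'s, hence $v$ is a unit and $c \in \F_q[T]$. But then for any index $i$ with $a_i \neq 0$, one has $\deg a_i' \le \deg a_i - 1 < \deg(c\,a_i)$ whenever $c$ is a nonzero polynomial, a contradiction. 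Therefore $c = 0$ and $g = 0$.

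Finally I would treat the remaining case $g = \partial f/\partial T = 0$. Then $a_i'(T) = 0$ for every $i$, so each $a_i$ lies in $\F_q[T^p]$. Since $\F_q$ is perfect, the Frobenius is a bijection on $\F_q$, and hence every element of $\F_q[T^p]$ is a $p$-th power in $\F_q[T]$: write $a_i = b_i^p$ with $b_i \in \F_q[T]$. Then
\[
f = \sum_i b_i(T)^p X^{pi} = \Bigl(\sum_i b_i(T) X^i\Bigr)^p,
\]
using that Frobenius is additive in characteristic $p$. Since $\deg f = d \ge p \ge 2$, the inner polynomial has positive degree, contradicting the irreducibility of $f$. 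The main obstacle I expect is the middle step: the bookkeeping needed to rule out $g = cf$ carefully, in particular verifying that $c$ must actually be a polynomial (via primitivity of $f$) before the degree comparison can be invoked.
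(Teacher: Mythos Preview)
Your proof is correct and follows the same overall strategy as the paper: assume $U=0$, use irreducibility of $f$ to deduce that $f$ divides $\partial f/\partial T$, conclude $\partial f/\partial T=0$, and then observe that $f\in\F_q[T^p,X^p]$ is a $p$-th power. The only difference is in the step you flagged as the main obstacle. The paper bypasses your entire $g=cf$ analysis by comparing degrees in $T$ rather than in $X$: once $f\mid \partial f/\partial T$ in $\F_q[T][X]$, the inequality $\deg_T(\partial f/\partial T)<\deg_T f$ immediately forces $\partial f/\partial T=0$. Your detour through $\F_q(T)[X]$, primitivity of $f$, and coefficient-wise degree comparison works, but it is not needed.
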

\begin{proof}
    Assume by way of contradiction that $U = 0$. Then $f,\frac{\partial f}{\partial T}$ have a common factor and since $f$ is irreducible we have $f\mid\frac{\partial f}{\partial T}$. Comparing degrees in $T$ we must have $\frac{\partial f}{\partial T}=0$. This means that $f\in\F_q[T^p,X^p]$ is a $p$-th power, contradicting its irreducibility.
\end{proof}

\begin{lem}
\label{Hensel for inseprable}
    Assume that $f$ is inseparable and irreducible, and $P^m \nmid U := \mathrm{Res}(f, \frac{\partial f}{\partial T}) \in \A $ for some $P\in\F_q[T]$ prime and $m \geq 1$. Then $\p{k} = 0$ for every $k \geq m+1$. 
\end{lem}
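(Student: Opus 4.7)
The plan is to derive a contradiction by applying \Cref{shared roots mod P} to $f$ and $\partial f/\partial T$. Suppose for contradiction that some $Q\in\F_q[T]$ satisfies $f(Q)\equiv 0\pmod{P^k}$ for some $k\ge m+1$. The key structural observation is that since $f$ is inseparable, $f\in\F_q[T][X^p]$, and hence $\partial f/\partial X\equiv 0$ as a polynomial in $\F_q[T][X]$. This vanishing is what makes the argument go through: $\partial f/\partial T$ takes over the role that $f'$ played in the separable Hensel-lifting arguments earlier in the section, and by \Cref{non zero resultant} the resultant $U$ is a nonzero element of $\F_q[T]$.

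Set $F(T):=f(Q(T))\in\F_q[T]$. Since $P^k\mid F$, write $F=P^kH$ in $\F_q[T]$; differentiating with respect to $T$ gives
$$F'(T)=kP^{k-1}P'(T)H+P^kH'=P^{k-1}\bigl(kP'(T)H+P(T)H'\bigr),$$
so $v_P(F')\ge k-1$. On the other hand, the chain rule together with $\partial f/\partial X\equiv 0$ yields
$$F'(T)=\frac{\partial f}{\partial T}(Q(T))+\frac{\partial f}{\partial X}(Q(T))\cdot Q'(T)=\frac{\partial f}{\partial T}(Q(T)).$$
Therefore $\frac{\partial f}{\partial T}(Q)\equiv 0\pmod{P^{k-1}}$, so $Q$ is a common root of $f$ and $\partial f/\partial T$ modulo $P^{k-1}$. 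Applying \Cref{shared roots mod P} to this pair gives $P^{k-1}\mid U$, and since $k-1\ge m$ this forces $P^m\mid U$, contradicting the hypothesis. Hence no such $Q$ exists and $\p{k}=0$ for every $k\ge m+1$.

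There is no serious obstacle. The only point requiring a moment's care is the divisibility $v_P(F')\ge k-1$: even when $p\mid k$ (so the term $kP^{k-1}P'H$ vanishes in characteristic $p$), the second summand $P^kH'=P^{k-1}(PH')$ still supplies the required factor of $P^{k-1}$, so the bound holds unconditionally. The vanishing of $\partial f/\partial X$ is essential; otherwise the chain rule would produce an extra term $\frac{\partial f}{\partial X}(Q)\cdot Q'(T)$ whose $P$-adic valuation we could not control.
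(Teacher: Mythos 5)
Your proof is correct and follows essentially the same route as the paper's: both rest on the chain-rule identity $\frac{d}{dT}f(Q(T))=\frac{\partial f}{\partial T}(Q)$ (valid because $\partial f/\partial X=0$ for inseparable $f$), the differentiation of $f(Q)=P^kH$ to gain a factor $P^{k-1}$, and an appeal to \Cref{shared roots mod P} applied to $f$ and $\partial f/\partial T$. The only difference is cosmetic — the paper first fixes $P^m\mid f(Q)$ and rules out $P^{m+1}\mid f(Q)$, whereas you contradict $P^k\mid f(Q)$ directly for $k\ge m+1$.
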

\begin{proof}
    Assuming the existence of $Q \in \A$ such that $P^m \mid f(Q)$ (if $f$ has no roots modulo $P^m$ we are done), we will now prove that $P^{m+1} \nmid f(Q)$. Since $P^m \mid f(Q)$, we know that $P^m \nmid \frac{\partial f}{\partial T}(Q)$; otherwise, by Lemma \ref{shared roots mod P} we would have $P^m|U$, contradicting our assumption.

    Since $f$ is inseparable and irreducible we have $f'=0$ and therefore 
    \[\frac{\partial f(Q)}{\partial T} = \frac{\partial f}{\partial T}(Q) + \frac{\partial f}{\partial X}(Q) \frac{dQ}{dT} = \frac{\partial f}{\partial T}(Q).\]
    If $P^{m+1} \mid f(Q)$, write $f(Q) = P^{m+1} C$ and then
    \begin{equation*}
        \begin{split}
           \frac{\partial f(T,Q(T))}{\partial T} &= \frac{ \partial(P^{m+1} C)}{\partial T} = P^{m+1}  \frac{\partial C}{\partial T} + (m+1)P^{m}  \frac{\partial P}{\partial T}  C. \\ 
        \end{split}
    \end{equation*}
    Thus $P^m \mid \frac{\partial f(T,Q(T))}{\partial T} = \frac{\partial f}{\partial T}(Q)$, contradicting the above observation $P^m \nmid \frac{\partial f}{\partial T}(Q)$.
\end{proof}

\begin{prop}
    \label{bound on pn}
    Assume that $f$ is irreducible. Then $\p{m} \ll 1$ for all $P$ prime and $m \geq 1$.
\end{prop}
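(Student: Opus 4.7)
My plan is to split the analysis according to whether $f$ is separable in $X$ or inseparable, and in each case to combine the Hensel-type stabilization lemmas already proved above with an elementary counting bound obtained by reducing modulo $P$. Since $f$ is irreducible and of positive degree in $X$, its content in $\F_q[T]$ is a unit and hence $\bar f \not\equiv 0 \pmod P$ for every prime $P \in \F_q[T]$; consequently $f$ has at most $d = \deg_X f$ roots in the field $\F_q[T]/P$. The reduction map $\F_q[T]/P^m \to \F_q[T]/P$ has fibers of size $|P|^{m-1}$ and sends roots of $f \bmod P^m$ to roots of $f \bmod P$, which gives the crude but crucial estimate
\[\rho_f(P^m) \leq d \cdot |P|^{m-1}\]
valid for all $P$ and $m \geq 1$.

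In the separable case, set $R = \mathrm{Res}(f, f') \neq 0$ and $\mu = v_P(R)$; since $\mu \deg P \leq \deg R$ we have $|P|^\mu \leq q^{\deg R}$. For $m \geq 2\mu + 1$, Lemma \ref{Hensel for seprable} yields $\rho_f(P^m) = \rho_f(P^{2\mu+1})$, so it suffices to bound $\rho_f(P^m)$ for $m \leq 2\mu + 1$. For such $m$ the crude estimate combined with the previous inequality gives
\[\rho_f(P^m) \leq d |P|^{m-1} \leq d |P|^{2\mu} \leq d \cdot q^{2 \deg R},\]
a constant depending only on $f$ and $q$.

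In the inseparable case, $f$ being irreducible and inseparable, Lemma \ref{non zero resultant} yields $U := \mathrm{Res}(f, \partial f/\partial T) \neq 0$; set $\nu = v_P(U)$, so $\nu \deg P \leq \deg U$. Lemma \ref{Hensel for inseprable} then gives $\rho_f(P^m) = 0$ for $m > \nu$, while for $m \leq \nu$ the crude estimate gives $\rho_f(P^m) \leq d |P|^{\nu - 1} \leq d \cdot q^{\deg U}$. The main subtlety, beyond invoking the correct stabilization lemma in each case, is recognizing that the stabilization exponent ($2\mu$ or $\nu$) enters exponentiated in the crude estimate, and therefore one must exploit the opposite dependence that large $|P|$ forces small $v_P$ (namely $\mu \deg P \leq \deg R$, respectively $\nu \deg P \leq \deg U$) in order to convert the factor $|P|^{2\mu}$ into the uniform constant $q^{2 \deg R}$ (respectively $|P|^\nu$ into $q^{\deg U}$), yielding a bound independent of $P$ and $m$.
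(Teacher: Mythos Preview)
Your proof is correct and follows the same overall strategy as the paper: split into the separable and inseparable cases and invoke the respective Hensel-type stabilization lemmas (Lemmas~\ref{Hensel for seprable} and~\ref{Hensel for inseprable}). The paper's version is a bit less explicit---it simply observes that there are only finitely many pairs $(P,m)$ for which $\rho_f(P^m)\neq\rho_f(P^{m-1})$ (separable case) or $\rho_f(P^m)\neq 0$ with $m\ge 2$ (inseparable case), and takes the maximum over this finite set---whereas your trade-off $|P|^\mu\le q^{\deg R}$ converts the same idea into an explicit constant depending only on $f$ and $q$.

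One minor slip: in the inseparable case, Lemma~\ref{Hensel for inseprable} applied with the smallest exponent $m$ satisfying $P^m\nmid U$, namely $m=\nu+1$, yields $\rho_f(P^k)=0$ only for $k\ge\nu+2$, not for $k>\nu$ as you wrote. This leaves the case $m=\nu+1$ uncovered by your dichotomy, but your bound still holds there since $\rho_f(P^{\nu+1})\le d|P|^{\nu}\le d\,q^{\deg U}$, so the conclusion is unaffected.
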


\begin{proof}
    Since $\A / P$ is a field we have $\p{} \leq d$. Thus it remains to handle the case $m\ge 2$.

    If $f$ is inseparable then using Lemmas \ref{Hensel for inseprable} and \ref{non zero resultant} we see that there are only finitely many pairs $P,m$ such that $m \geq 2$ and $\p{m} \neq 0$ (as they must satisfy $P^m\,|\,\mathrm{Res}(f,\frac{\partial f}{\partial T})\neq 0$).

    Now if $f$ is separable then by Lemma \ref{Hensel for seprable} there are only finitely many pairs of $(P, m)$ such that $m \geq 2$ and $\p{m} \neq \p{m-1}$, Denote these pairs by $(P_i, m_i)_{i=1}^s$. Then for all primes $P$ and $m \geq 1$
    \[\p{m} \leq \max\{d, \rho_f(P_i^{m_i})_{i=1}^s\}\ll 1.\]
\end{proof}

\begin{lem}
\label{Replacment to inseparable polinomial}
    Let $f \in \F_q[T][X^p]$ be inseparable and irreducible. Then there exist a separable and irreducible $h \in \AX$ such that $\p{} = \rho_h(P)$ for all primes $P \in \A$.
\end{lem}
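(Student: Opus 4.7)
The plan is to factor out the inseparable part of $f$. Since $f$ is inseparable, $f \in \F_q[T][X^p]$; I would take $r \ge 1$ to be maximal with the property $f \in \F_q[T][X^{p^r}]$ (this is well-defined since $p^r \mid \deg f$ forces $r \le v_p(\deg f)$) and let $h \in \F_q[T][X]$ be the unique polynomial satisfying $f(X) = h(X^{p^r})$. The goal is then to verify that this $h$ is separable, irreducible, and satisfies $\rho_f(P) = \rho_h(P)$ for every prime $P \in \F_q[T]$.

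First I would check that $h$ is separable: otherwise $h \in \F_q[T][X^p]$, and writing $h(X) = k(X^p)$ would give $f(X) = k(X^{p^{r+1}})$, contradicting the maximality of $r$. Next I would verify irreducibility of $h$: note that $\deg h = (\deg f)/p^r \ge 1$, so a hypothetical nontrivial factorization $h = h_1 h_2$ would yield the nontrivial factorization $f(X) = h_1(X^{p^r}) h_2(X^{p^r})$ in $\F_q[T][X]$, contradicting the irreducibility of $f$.

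For the equality $\rho_f(P) = \rho_h(P)$, I would use that the residue ring $\F_q[T]/P$ is a finite field of characteristic $p$, so the Frobenius $x \mapsto x^p$, and hence its $r$-th iterate $\Phi \colon x \mapsto x^{p^r}$, is a bijection of $\F_q[T]/P$. Since $f(Q) \equiv 0 \pmod P$ if and only if $h(Q^{p^r}) \equiv 0 \pmod P$, substituting $Y = Q^{p^r} = \Phi(Q)$ and invoking bijectivity of $\Phi$ yields
$$\rho_f(P) = |\{Q \bmod P : h(Q^{p^r}) \equiv 0 \pmod P\}| = |\{Y \bmod P : h(Y) \equiv 0 \pmod P\}| = \rho_h(P).$$
No real obstacle arises in this argument; the only substantive point is the bijectivity of the Frobenius on the finite residue field, which allows the purely inseparable substitution $X \mapsto X^{p^r}$ to be undone at the level of counting roots modulo a prime.
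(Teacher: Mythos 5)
Your proposal is correct and follows essentially the same route as the paper: write $f(X)=h(X^{p^r})$ with $r$ maximal so that $h$ is separable, and use that the iterated Frobenius is a bijection of the finite residue field $\F_q[T]/P$ to match roots of $f$ and $h$ modulo $P$. Your explicit verification that $h$ inherits irreducibility from $f$ is a point the paper leaves implicit, and it is argued correctly.
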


\begin{proof}
    Write $f(X) = h(X^{p^m})$ for some $m \geq 1$ such that $h\in\F_q[T][X]\setminus\F_q[T][X^p]$ is separable. For any prime $P$, the $m$-fold Frobenius automorphism of $\F_q[T]/P$ given by $x\mapsto x^{p^m}$ gives a one-to-one correspondence between the roots of $f$ modulo $P$ and the roots of $h$ modulo $P$. Hence $\rho_f(P) = \rho_h(P)$.
\end{proof}

\section{The upper bound}\label{sec: upper bound}

Throughout this section the polynomial $f$ is assumed \emph{irreducible}.
To obtain the upper and lower bounds on $\LL$ (recall that this quantity was defined in (\ref{eq: def Lfn})) we set $$P_f(n) : = \prod_{Q \in M_n} f(Q)$$ and write 
\begin{equation}\label{eq: L P}\deg L_f(n) = c_f \deg P_f(n)- (c_f \deg P_f(n) - \deg L_f(n)),\end{equation}
where $c_f=1/|V_f|$.
\begin{lem}
\label{P_f(n)}
    $\deg P_f(n) = dnq^n + O(q^n)$.
\end{lem}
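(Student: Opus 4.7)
The plan is to compute $\deg f(Q)$ explicitly for each $Q\in M_n$ with $n$ sufficiently large, observe that all such values coincide, and then multiply by $|M_n|=q^n$.

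First I would write $f(X)=\sum_{i=0}^{d}f_iX^i$ and set $C:=\max_{0\le i\le d-1}\deg f_i$ (recall $f,q$ are fixed so $C,\deg f_d$ are constants independent of $n$). For $Q\in M_n$ the polynomial $Q^i$ is monic of degree $ni$, and therefore
\[\deg(f_iQ^i)=\deg f_i+ni\quad\text{for every $i$ with }f_i\neq 0.\]
Here I would use that for $i<d$ we have $\deg(f_iQ^i)\le C+(d-1)n$, whereas the top term satisfies $\deg(f_dQ^d)=\deg f_d+dn$. These differ by at least $n-(C-\deg f_d)$, which is positive for $n\ge n_0:=C-\deg f_d+1$.

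Consequently, for all $n\ge n_0$ and all $Q\in M_n$ the leading term $f_dQ^d$ strictly dominates the rest, so $f(Q)\neq 0$ and
\[\deg f(Q)=dn+\deg f_d.\]
Summing this identity over the $q^n$ monic polynomials of degree $n$ gives
\[\deg P_f(n)=\sum_{Q\in M_n}\deg f(Q)=(dn+\deg f_d)q^n=dnq^n+O(q^n),\]
where the implied constant in $O(q^n)$ is $\deg f_d$, depending only on $f$. Since we only need the claim as $n\to\infty$, the finitely many values $n<n_0$ are absorbed into the error term. There is no real obstacle here: the estimate is a routine verification that the leading monomial $f_dX^d$ governs the size of $f(Q)$ for $Q$ of large degree, and the count $|M_n|=q^n$ is standard.
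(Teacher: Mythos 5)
Your proof is correct and takes essentially the same route as the paper: both observe that $\deg f(Q)=dn+\deg f_d$ for all $Q\in M_n$ once $n$ is large enough, then sum over the $q^n$ elements of $M_n$. Your write-up merely makes explicit the degree comparison justifying that the leading term dominates, which the paper leaves implicit.
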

\begin{proof}
    For sufficiently large $n$ and $Q \in M_n$ we have $\deg f(Q) = dn + \deg f_d$, hence
    \begin{equation*}
            \deg \PP  = \deg \prod_{Q \in M_n} f(Q) = \sum_{Q \in M_n} \deg f(Q) 
             = \sum_{Q \in M_n} (dn + \deg f_d) = d nq^n + O(q^n).
    \end{equation*}
\end{proof}

\noindent {\bf Convention.} Throughout the rest of the paper $P$ will always denote a prime of $\F_q[T]$ and $\sum_P$ (resp. $\prod_P$) will denote a sum (resp. product) over all monic primes of $\F_q[T]$. A sum of the form $\sum_{a\le\deg P\le b}$ is over all monic primes in the corresponding degree range (and the same for products).
\\

To estimate $c_f \deg P_f(n)  - \deg L_f(n)$ we write the prime decomposition of $\LL$ and $\PP$ as
\[ \LL = \prod_{P} P^{\beta_P(n)},\quad \PP = \prod_{P} P^{\alpha_P(n)},\]
where the products are over all (monic) primes in $\F_q[T]$.
\\ \\
{\bf Convention.} Throughout Sections \ref{sec: upper bound}-\ref{sec: special asym} we will always assume that $n$ is large enough so that $f(Q)\neq 0$ for all $Q\in M_n$. Thus $L_f(n),P_f(n)\neq 0$ and $\alpha_P(n),\beta_P(n)$ are always finite.
\\

We have
\begin{equation}\label{eq:def alpha beta}\an = \sum_{Q \in M_n} \val{f(Q)} ,\ \bn = \max\{ \val{f(Q)} : Q \in M_n\},\end{equation}
(recall that $\val{Q}$ is the exponent of $P$ in the prime factorization of $Q$). Combining (\ref{eq: L P}) with \Cref{P_f(n)} we have
\[\deg \LL = c_f d nq^n - \sum_{P}(c_f \an - \bn) \deg P + O(q^n).\]

\begin{lem}
\label{bound on bn using an}
    For sufficiently large n 
    \[\bn \leq c_f \an .\]
\end{lem}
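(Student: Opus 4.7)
The plan is to exploit the translation symmetry built into $V_f$. Pick any $Q_0\in M_n$ that achieves the maximum defining $\beta_P(n)$, so $v_P(f(Q_0))=\beta_P(n)$. For every $g\in V_f$ the defining property $f(X+g)=f(X)$ yields $f(Q_0+g)=f(Q_0)$, hence $v_P(f(Q_0+g))=\beta_P(n)$.

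Next I would check that the translates $Q_0+g$ all lie in $M_n$. Since $V_f$ is finite, for $n$ sufficiently large we have $\deg g<n$ for every $g\in V_f$, and as $Q_0$ is monic of degree $n$, so is $Q_0+g$. Moreover the map $g\mapsto Q_0+g$ is injective, so the $|V_f|$ translates are $|V_f|$ distinct elements of $M_n$.

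Plugging this into the definition (\ref{eq:def alpha beta}) of $\alpha_P(n)$,
\begin{equation*}
\alpha_P(n)=\sum_{Q\in M_n}v_P(f(Q))\ge\sum_{g\in V_f}v_P(f(Q_0+g))=|V_f|\,\beta_P(n),
\end{equation*}
which, upon dividing by $|V_f|$, gives $\beta_P(n)\le c_f\alpha_P(n)$ as desired.

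There is really no serious obstacle here: the content of the lemma is precisely the observation that the LCM-defining maximum $\beta_P(n)$ is achieved simultaneously on an entire $V_f$-orbit in $M_n$, which is the same redundancy that motivates the constant $c_f=1/|V_f|$ in Conjecture \ref{the conjecture}. The only care needed is the (trivial) verification that $n$ is large enough for all translates $Q_0+g$ with $g\in V_f$ to remain inside $M_n$.
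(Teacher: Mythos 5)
Your argument is correct and is essentially identical to the paper's proof: both pick a maximizer $Q_{max}$ of $v_P(f(Q))$, use $f(Q_{max}+g)=f(Q_{max})$ for $g\in V_f$ to produce $|V_f|$ distinct elements of $M_n$ attaining $\beta_P(n)$, and sum to get $|V_f|\beta_P(n)\le\alpha_P(n)$. No issues.
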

\begin{proof}
    Denote by $Q_{max} \in M_n$ an element such that $\bn = v_P(f(Q_{max}))$. Then for each element $g \in V_f$ we have 
    $f(Q_{max} + g) = f(Q_{max})$ and hence $\beta_P(n)= v_P(f(Q_{max})) = v_P(f(Q_{max} + g)).$
    For $n$ sufficiently large $Q_{max} + g \in M_n$, so
    \[|V_f| \bn = \sum_{g \in V_f} v_P(f(Q_{max} + g)) \leq \sum_{Q \in M_n} v_P(f(Q)) = \an .\]
    The assertion follows since $c_f=1/|V_f|$.
\end{proof}

The next lemma is the main tool for estimating $\alpha_P(n), \beta_P(n)$. For its proof we introduce the following notation, which will be used in the sequel as well:
\begin{equation}\label{eq: def sf}s_f(P^k,n) = \left|\{Q \in M_n: f(Q) \equiv 0 \bmod P^k\}\right|,\end{equation}
where $P$ is prime and $k\ge 1$.

\begin{lem}
    \label{estimate an and bn}
    Let $P \in \A$ be a prime.
    
    \begin{enumerate}
        \item[(i)] $\bn = O\left(\frac{n}{\deg P}\right)$.
        \item[(ii)] If $f$ is separable and $P \nmid \mathrm{Res}(f,f')$, then 
        \[\an = q^n \frac{\p{}}{|P| - 1} + O\left(\frac{n}{\deg P}\right).\]
        \item[(iii)] If $f$ is inseparable and $P \nmid \mathrm{Res}(f, \frac{\partial f}{\partial T})$, then
        \[\an = q^n \frac{\p{}}{|P|} + O\left(\frac{n}{\deg P}\right).\]
        \item[(iv)] For the finitely many ``bad" primes where neither (ii) nor (iii) hold we have
        \[\an = O(q^n).\]
    \end{enumerate}
\end{lem}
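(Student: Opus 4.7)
The plan is to decompose $\alpha_P(n)=\sum_{k\ge 1}s_f(P^k,n)$ into ``level-set'' counts, where $s_f(P^k,n)$ was defined in (\ref{eq: def sf}). The combinatorial input I will use is that for any residue $r$ modulo $P^k$, the number of $Q\in M_n$ with $Q\equiv r\bmod P^k$ equals $q^{n-k\deg P}$ when $k\deg P\le n$, and is at most $1$ otherwise (this follows by writing $Q=P^kS+r'$ with $\deg r'<k\deg P$ and counting the admissible monic $S$). This immediately yields
$$s_f(P^k,n)=\rho_f(P^k)\,q^{n-k\deg P}\ \text{when } k\deg P\le n,\qquad s_f(P^k,n)\le\rho_f(P^k)\ \text{otherwise.}$$

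Part (i) will follow immediately from $v_P(f(Q))\cdot\deg P\le\deg f(Q)=dn+O(1)$, valid once $n$ is large enough that $f(Q)\ne 0$ for all $Q\in M_n$. For part (ii), the assumption $P\nmid\mathrm{Res}(f,f')$ means $\mu:=v_P(R)=0$, so Lemma \ref{Hensel for seprable} gives $\rho_f(P^k)=\rho_f(P)$ for every $k\ge 1$. The sum over $k\le n/\deg P$ then becomes a geometric series whose value is $\rho_f(P)\,q^n/(|P|-1)$, and completing it to an infinite sum costs only $O(1)$. The tail $k>n/\deg P$ is controlled by part (i): each summand is at most $\rho_f(P)=O(1)$, and there are at most $O(n/\deg P)$ such $k$, producing the claimed error.

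Part (iii) will be even simpler. Lemma \ref{Hensel for inseprable} applied with $m=1$ forces $\rho_f(P^k)=0$ for every $k\ge 2$ under the hypothesis $P\nmid U$, so only $s_f(P,n)=\rho_f(P)\,q^n/|P|$ contributes when $\deg P\le n$; in the remaining range $\deg P>n$, both $\alpha_P(n)$ and the claimed main term are absolutely bounded in terms of $d$, and the discrepancy is absorbed into the error $O(n/\deg P)$. For part (iv), I will invoke \Cref{bound on pn} to obtain the uniform bound $\rho_f(P^k)\ll 1$; summing $s_f(P^k,n)\ll q^{n-k\deg P}$ over $k$ with $k\deg P\le n$ gives a geometric series of size $O(q^n/(|P|-1))=O(q^n)$, and the tail $k>n/\deg P$ contributes at most $O(n/\deg P)\ll q^n$.

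The main obstacle I anticipate is ensuring uniformity of the implied constants in $P$ and $n$: the uniform boundedness supplied by \Cref{bound on pn} together with the tail control from (i) are exactly what is needed, and I would track explicit constants to verify this throughout. A secondary point to handle carefully is the boundary regime where $\deg P$ is comparable to $n$ (so the bulk and tail ranges meet) and the regime $\deg P>n$ in part (iii); in both cases the contributions are $O(1)$, safely absorbed into $O(n/\deg P)$ since $n/\deg P\ge 1/d$ on the relevant range $\deg P\le dn+O(1)$.
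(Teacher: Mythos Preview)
Your proposal is correct and follows essentially the same approach as the paper: both decompose $\alpha_P(n)=\sum_{k\ge 1}s_f(P^k,n)$, use the exact count $s_f(P^k,n)=\rho_f(P^k)q^{n-k\deg P}$ for $k\deg P\le n$ together with the crude bound $s_f(P^k,n)\le\rho_f(P^k)\ll 1$ for larger $k$, and then invoke Lemmas \ref{Hensel for seprable}, \ref{Hensel for inseprable}, and \Cref{bound on pn} in the respective cases to evaluate or bound the resulting series. Your explicit attention to the boundary regime $\deg P>n$ (where the $O(1)$ discrepancy must be absorbed into $O(n/\deg P)$) is if anything slightly more careful than the paper's exposition; note that the same boundary check is needed in part (ii), not just part (iii), but the argument you sketch for (iii) carries over verbatim.
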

\begin{proof}
    To prove (i) we note that since there exists $Q \in M_n$ such that $P^{\bn} \mid f(Q)$ and $f(Q) \neq 0$ we get:
    \[\bn \deg P \leq \deg f(Q).\]
    For sufficiently large $n$ we have $\deg f(Q) = dn + \deg f_d$, thus
    \[\bn \ll \frac{n}{\deg P},\] establishing (i).
    
    Using the notation (\ref{eq: def sf}) we have
    $$
           \an  = \sum_{Q \in M_n} v_P(Q) = \sum_{Q \in M_n} \sum_{\substack{k \geq 1  \atop{P^k \mid f(Q)}}} 1 
            = \sum_{k \geq 1{ }} \sum_{ \substack{ Q \in M_n  \atop{P^k \mid f(Q)}}} 1 = \sum_{k \geq 1} s_f(P^k,n).
    $$
    Note that $$s_f(P^k,n) = q^n\frac{\p{k}}{|P^k|} + O(1),$$ since if $Q$ is a root of $f$ modulo $P^k$ and $\deg P^k \le n$ then there are exactly $\frac{q^n}{|P|^k}$ elements in $M_n$ that reduce to $Q$ modulo $P^k$. And if $\deg P^k > n$ then $s_f(P^k,n) \leq \p{k} \ll 1$ by Proposition \ref{bound on pn}. Hence
    \begin{equation*}
        %\begin{split}
           \an = \sum_{k \geq 1} s_f(P^k,n) = \sum_{k \geq 1} \left[q^n\frac{\p{k}}{|P|^k} + O(1)\right]
           = q^n \sum_{k \geq 1} \frac{\p{k}}{|P|^k} + O\left(\frac{n}{ \deg P}\right).
        %\end{split}
    \end{equation*}
    
    Let us look at the different cases:
    \begin{itemize}
        \item If $f$ is separable and $P \nmid \mathrm{Res}(f,f')$ then we have by \Cref{Hensel for seprable} that $\p{k} = \p{}$, thus
        
        \begin{equation*}
            \begin{split}
                \an & = q^n \sum_{k \geq 1} \frac{\p{}}{|P|^k} + O\left(\frac{n}{ \deg P}\right)\\ 
                & = q^n \p{} \sum_{k \geq 1} \left(\frac{1}{|P|}\right)^k + O\left(\frac{n}{ \deg P}\right) \\
                & = q^n \frac{\p{}}{|P| - 1} + O\left(\frac{n}{ \deg P}\right).
            \end{split}
        \end{equation*}
        
        \item If $f$ is inseparable and $P \nmid \mathrm{Res}_X(f, \frac{\partial f}{\partial T})$ then using Lemma \ref{Hensel for inseprable} we get
        \[\an = q^n \frac{\p{}}{|P|} + O\left(\frac{n}{\deg P}\right).\]
        
        \item For a general irreducible $f$, using the fact that $\text{for every } k \geq 1,\ \p{k} \ll 1$ (\Cref{bound on pn}) we get
        \begin{equation*}
            \begin{split}
              \an   = q^n \sum_{k \geq 1} \frac{\p{k}}{|P|^k} + O\left(\frac{n}{ \deg P}\right)
               \ll q^n \sum_{k \geq 1} \frac{1}{|P|^{k}} + O\left(\frac{n}{ \deg P}\right) 
               \ll O(q^n).
            \end{split}
        \end{equation*}
    \end{itemize}
\end{proof}

\begin{lem}\label{lem: medium range}
    \[\deg\left(\prod_{n< \deg P \leq n + \deg f_d} P^{\an}\right) = O(q^n).\]
\end{lem}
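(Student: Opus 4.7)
The plan is to bound $\an\deg P$ uniformly for the primes in the range, and then count such primes via the Prime Polynomial Theorem. First I would use the identity $\an=\sum_{k\ge 1}s_f(P^k,n)$ obtained by swapping the order of summation, exactly as in the proof of \Cref{estimate an and bn}, where $s_f(P^k,n)$ counts the $Q\in M_n$ with $P^k\mid f(Q)$.

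The key observation is that when $\deg P>n$, any two distinct elements of $M_n$ differ by a nonzero polynomial of degree strictly less than $n<\deg P\le k\deg P$, and therefore have distinct residues modulo $P^k$. Hence the reduction map $M_n\hookrightarrow\F_q[T]/P^k$ is injective, giving $s_f(P^k,n)\le\p{k}$; and by \Cref{bound on pn} this is $O(1)$ uniformly in $P$ and $k$. Moreover, since $f(Q)\neq 0$ for $n$ large, we have $v_P(f(Q))\deg P\le\deg f(Q)=dn+\deg f_d$, so only the terms with $k\le (dn+\deg f_d)/\deg P\le d$ contribute. Combining, $\an\ll 1$ uniformly for $\deg P>n$, and consequently $\an\deg P\ll n+\deg f_d\ll n$ uniformly in the stated range.

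To finish, I would invoke the Prime Polynomial Theorem, which gives $\sim q^k/k$ monic primes of degree exactly $k$; summing over $k\in(n,n+\deg f_d]$ yields $O(q^{n+\deg f_d}/n)=O(q^n/n)$ monic primes in the range (as $\deg f_d$ is a fixed constant). Multiplying the two estimates,
\[\deg\left(\prod_{n<\deg P\le n+\deg f_d}P^{\an}\right)=\sum_{n<\deg P\le n+\deg f_d}\an\deg P\ll\frac{q^n}{n}\cdot n=q^n,\]
as required. The only substantive input is \Cref{bound on pn}; beyond that the entire argument is driven by the injectivity $M_n\hookrightarrow\F_q[T]/P^k$ valid once $\deg P>n$, and I anticipate no real obstacle.
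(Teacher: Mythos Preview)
Your proof is correct and follows essentially the same route as the paper's: both establish $\an\ll 1$ uniformly for $\deg P>n$ (the paper by citing \Cref{estimate an and bn}, you by unpacking the injectivity $M_n\hookrightarrow\F_q[T]/P^k$ together with \Cref{bound on pn}, which is precisely what underlies that lemma in this regime), and then sum over primes via the Prime Polynomial Theorem. One minor slip: the inequality $(dn+\deg f_d)/\deg P\le d$ need not hold literally when $\deg f_d>d$, but the weaker bound $k=O(1)$ does hold and is all your argument actually uses.
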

\begin{proof}
    By Lemma \ref{estimate an and bn} for sufficiently large $n$ and $P$ a prime such that $\deg P > n$, we have $\an \leq  q^n \frac{\p{}}{|P| - 1} + O\left(\frac{n}{ \deg P}\right) \ll \p{} + O(1)  \ll 1$. Using the Prime Polynomial Theorem, we have
    \begin{equation*}
        \begin{split}
            \deg\left(\prod_{n< \deg P \leq n + \deg f_d} P^{\an}\right) & = \sum_{ n < \deg P \leq n + \deg f_d} \an \deg P \\
            & \ll \sum_{n < \deg P \leq n + \deg f_d} \deg P = \sum_{k=n+1}^{n + \deg f_d} \sum_{\deg P = k} k \\
            & \ll \sum_{k=n+1}^{n + \deg f_d}  k \frac{q^k}{k} = \frac{q^{n + \deg f_d + 1} - q^{n+1}}{q - 1} \\
            & \ll q^n.
        \end{split}
    \end{equation*}
\end{proof}

\begin{prop}
    \label{small P}
    Denote $R_f(n) = \prod_{\deg P \leq n + \deg f_d} P^{\an}$. Then
    \[\deg R_f(n) = n q^n + O(q^n).\]
\end{prop}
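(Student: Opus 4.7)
The plan is to decompose $\deg R_f(n) = \sum_{\deg P \leq n+\deg f_d} \an \deg P$ by the size of $P$: discard medium-sized primes $n < \deg P \leq n + \deg f_d$ using \Cref{lem: medium range}, handle the finitely many ``bad'' primes excluded from parts (ii)--(iii) of \Cref{estimate an and bn} via part (iv), and apply parts (ii)--(iii) to the bulk, thereby reducing the problem to the evaluation of a Mertens-type sum $\sum_P \rho_f(P)\deg P/|P|$.

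First I would observe that the medium range contributes $O(q^n)$ by \Cref{lem: medium range}, while the finitely many bad primes have $\deg P = O(1)$ with $\an = O(q^n)$, together contributing $O(q^n)$. For the remaining primes, \Cref{estimate an and bn}(ii)--(iii) give uniformly
\[\an = \frac{q^n\rho_f(P)}{|P|} + O\!\left(\frac{q^n}{|P|^2}\right) + O\!\left(\frac{n}{\deg P}\right),\]
absorbing the $1/(|P|-1)$ vs.\ $1/|P|$ discrepancy of the separable case into the $|P|^{-2}$ term. Multiplying by $\deg P$ and summing, the per-prime $O(n)$ yields $O(q^n)$ after summing over the $O(q^n/n)$ primes of degree $\leq n$ (by the Prime Polynomial Theorem), while the $|P|^{-2}$ correction contributes $q^n\sum_P\deg P/|P|^2 = O(q^n)$ by convergence of the series. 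Hence
\[\deg R_f(n) = q^n \sum_{\deg P \leq n} \frac{\rho_f(P)\deg P}{|P|} + O(q^n).\]

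It remains to show that $S_n := \sum_{\deg P \leq n}\rho_f(P)\deg P/|P| = n + O(1)$. By \Cref{Replacment to inseparable polinomial} I may assume $f$ is separable, and then for $P$ outside a finite set, $\rho_f(P)$ counts prime ideals $\mathfrak{P}$ of the normalization of $\AX/(f)$ above $P$ with residue degree one. Therefore
\[\sum_{\deg P = k}\rho_f(P) = \#\{\mathfrak{P}\text{ in }E = \F_q(T)[X]/(f)\colon \deg_{\F_q}\mathfrak{P} = k\} + O(q^{k/2}/k),\]
with the error absorbing primes of higher residue degree and finitely many ramified/bad primes. By the Weil bound applied to the smooth projective model of $E$, if the constant field of $E$ is $\F_{q^r}$ then this count equals $r q^k/k + O(q^{k/2}/k)$ when $r\mid k$ and vanishes otherwise. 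Substituting,
\[S_n = \sum_{1\leq k\leq n,\,r\mid k}\left(r + O(q^{-k/2})\right) + O(1) = r\lfloor n/r\rfloor + O(1) = n + O(1),\]
as desired.

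The main obstacle will be invoking the correct form of the Prime Polynomial Theorem/Weil bound for the function field $E$ with appropriate care for a possibly nontrivial constant field extension (the factor $r$ conveniently cancels with the restriction $r\mid k$); everything else reduces to routine bookkeeping with the estimates already established.
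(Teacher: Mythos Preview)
Your argument is correct and the reduction to the Mertens-type sum $S_n=\sum_{\deg P\le n}\rho_f(P)\deg P/|P|$ is carried out exactly as in the paper (medium range via \Cref{lem: medium range}, bad primes via \Cref{estimate an and bn}(iv), bulk via (ii)--(iii), error terms via the Prime Polynomial Theorem). The one genuine difference is in how you evaluate $S_n$. The paper passes to the \emph{splitting field} of $f$, applies the function-field Chebotarev Density Theorem to compute $\sum_{\deg P=k}\rho_f(P)$ as a sum of $\mathrm{Fix}(C)|C|/|G|$ over conjugacy classes $C$ mapping to $\phi^k$ in $\mathrm{Gal}(\F_{q^v}/\F_q)$, and then invokes Burnside's lemma (together with transitivity of $G$, i.e.\ irreducibility of $f$) to obtain $\sum_{C}\mathrm{Fix}(C)|C|/|G|=1$; summing over $k$ gives $v\lfloor n/v\rfloor=n+O(1)$. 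You instead stay in the degree-$d$ extension $E=\F_q(T)[X]/(f)$, identify $\rho_f(P)$ (for good $P$) with the number of residue-degree-one primes of $E$ above $P$, and appeal directly to the Prime Polynomial Theorem (Weil bound) for $E$ over its constant field $\F_{q^r}$, obtaining $r\lfloor n/r\rfloor=n+O(1)$. Your route is a bit more economical---no Galois closure, no Burnside---while the paper's route makes the group-theoretic reason for the constant $1$ more transparent; both ultimately rest on Weil's Riemann Hypothesis for curves.
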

\begin{proof}
    Let us assume first that $f$ is separable. We will handle the inseparable case at the end of the proof. We note that we may ignore the $O(1)$ \textquotedblleft bad" primes (as defined in Lemma \ref{estimate an and bn}) with an error term of $O(q^n)$ and by \Cref{lem: medium range} we can ignore the primes with $n<\deg P\le n+\deg f_d$ with the same error term. Thus by \Cref{estimate an and bn} we obtain
    \begin{equation}\label{eq: R_f(n)}
        \begin{split}
            \deg R_f(n)  = \sum_{\deg P \leq n} \an \deg P + O(q^n)  
             = \sum_{\deg P \leq n} q^n \frac{\rho_f(P)}{|P| - 1} \deg P + \sum_{\deg P \leq n} O(n) + O(q^n).
        \end{split}
    \end{equation}
    
We bound the error term using the Prime Polynomial Theorem:
    \begin{equation}\label{eq: error in R_f(n)}
    \begin{split}
        \sum_{\deg P \leq n} n = n \sum_{k=1}^n \sum_{\deg P = k} 1 
         \ll n \sum_{k=1}^n \frac{q^k}{k}  \ll q^n.
    \end{split}
    \end{equation}
    
    Now to estimate $\sum_{\deg P \leq n} q^n \frac{\rho_f(P)}{|P| - 1} \deg P$ we will use the Chebotarev Density Theorem in function fields \cite[Proposition 7.4.8]{Fried2023}. First we introduce some notation and recall some terminology. Let $E / \F $ be the splitting field of $f$ and denote by $G$ the Galois group of $f$. For each prime $P$ of $\A$ unramified in $E/\F_q(T)$ the Frobenius class of $P$ is defined to be:
    \begin{equation*}
    \begin{split}
        \forb{P} = \left\{\sigma \in G \,:\, \exists \ \mathfrak{P}/P \text{ prime of } E\text{ such that } x^{|P|}\equiv \sigma(x)\pmod{\mathfrak P}\text{ for all }x\text{ with }v_{\mathfrak P}(x)\ge 0\right\} .
    \end{split}
    \end{equation*}
    The Frobenius class $\forb{P}$ is a conjugacy class in $G$. Denote by $S$ the set of all conjugacy classes in $G$. Given a conjugacy class $C \in S$, we set
    \[\pi_C(n) = \left| \left\{P\ \mathrm{prime} : \deg P = n, \forb{P} = C\right\}\right| . \]
    Let $K = \mathbb{F}_{q^v}$ be the algebraic closure of $\mathbb{F}_q$ in $E$. We have $G_0 := \mathrm{Gal}(K / \mathbb{F}_q) = \langle \phi \rangle$, where $\phi(x)=x^q$ is the $q$-Frobenius. Denote the restriction of automorphisms map by $\varphi: G \to G_0$. Since $G_0=\langle\phi\rangle$ is cyclic and in particular abelian, we have for all $C \in S$ that $\varphi (C) = \{\phi^{n_C}\}$ for some $n_C\in\Z$. Define $$S_k := \{C \in S \mid \varphi(C) = \{\phi^k\}\}.$$
    
    Now the Chebotarev Density Theorem in function fields says that
    \[\pi_C(k) = \left\{\begin{array}{ll}v \frac{|C|}{|G|} \frac{q^k}{k} + O\left(\frac{q^{k/2}}{k}\right),&C\in S_k,\\ 0,&C \notin S_k.\end{array}\right.\]
    
    Since the Galois group acts on the roots of $f$, we can define $\mathrm{Fix}(C)$ to be the number of roots fixed by any element in the conjugacy class $C$ (this number is the same for all $\sigma \in C$). Assuming $P$ is unramified in $E/\F_q(T)$, we have $\p{} = \mathrm{Fix}\left(\forb{P}\right)$. 
    In the calculations throughout the rest of the proof summation over $P$ denotes summation over primes $P\nmid \mathrm{Res}_X(f,f')$ if $f$ is separable and $P\nmid\mathrm{Res}(f,\frac{\partial f}{\partial T})$ otherwise. In the case when $f$ is separable this conditions insures that $P$ is unramified in $E/\F_q(T)$. As we observed above the excluded primes contribute $O(q^n)$.
    
    We have:
    \begin{multline*}
            \sum_{\deg P = k} \frac{\p{}}{q^k - 1}k = \sum_{\deg P = k} \frac{\mathrm{Fix}\left(\forb{P}\right)}{q^k - 1} k 
            = \sum_{C \in S}\frac{\mathrm{Fix}(C) \pi_C(k)}{q^k - 1} k  
            = \sum_{C \in S_k} \left[\frac{\mathrm{Fix}(C) q^k}{(q^k - 1)k} \frac{v|C|}{|G|} k + O(q^{-k/2}) \right] \\
            = \sum_{C \in S_k} v \frac{\mathrm{Fix}(C) |C|}{|G|}  + O(q^{-k/2})
        \end{multline*}
    and therefore
    \begin{multline}\label{eq: calc cheb}
             \sum_{\deg P \leq n} q^n \frac{\p{}}{q^k - 1}k  = q^n \sum_{k=1}^{n}  \sum_{\deg P = k} \frac{\p{}}{q^k - 1}k 
              = q^n \sum_{k=1}^{n} \left[ \sum_{C \in S_k} v \frac{\mathrm{Fix}(C) |C|}{|G|} + O(q^{-k/2}) \right]\\
             = q^n \sum_{k=1}^{n} \sum_{C \in S_k} v \frac{\mathrm{Fix}(C) |C|}{|G|} + O(q^n)
             = q^n \sum_{l=1}^{\lfloor \frac{n}{v} \rfloor} v \sum_{C \in S} \frac{\mathrm{Fix}(C) |C|}{|G|} + O(q^n).
    \end{multline}
    Using Burnside’s lemma \cite[Theorem 3.22]{rotman2012introduction} and the transitivity of $G$ (which is a consequence of $f$ being irreducible) we get
    \[1 = \sum_{\sigma \in G} \frac{\mathrm{Fix}(\sigma)}{|G|} = \sum_{C \in S} \frac{\mathrm{Fix}(C)|C|}{|G|} . \]
    Pluggin this into (\ref{eq: calc cheb}) and recalling (\ref{eq: R_f(n)}),(\ref{eq: error in R_f(n)}) we obtain
    \[\deg R_f(n) = \sum_{\deg P \leq n} q^n \frac{\rho_f(P)}{|P| - 1} \deg P + O(q^n) = q^n \sum_{l=1}^{\lfloor \frac{n}{v} \rfloor} v + O(q^n) = nq^n + O(q^n). \]
    
    This completes the proof in the separable case. To handle the case when $f$ is inseparable we again do the same calculations (using Lemma \ref{estimate an and bn}(iii) this time):
    
    \begin{equation*}
        \begin{split}
            \deg R_f(n) & = \sum_{\deg P \leq n} \an \deg P + O \left ( q^n \right) \\ 
            & = \sum_{\deg P \leq n} q^n \frac{\rho_f(P)}{|P|} \deg P + \sum_{\deg P \leq n} O(n) + O(q^n).
        \end{split}
    \end{equation*}
    
    We can replace $\rho_f(P)$ with $\rho_h(P)$, where $h$ is the polynomial given by Lemma \ref{Replacment to inseparable polinomial}. Since $h$ is separable, the argument above (with $f$ replaced by $h$) yields:
    \begin{equation*}
    \deg R_f(n) = \sum_{\deg P \leq n} q^n \frac{\rho_h(P)}{|P|} + O(q^n)=nq^n+O(q^n),
    \end{equation*}
    completing the proof.
\end{proof}

We are now ready to prove the upper bound on $\deg L_f(n)$.
\begin{proof}[Proof of Theorem \ref{Upper bound}]
    Using Lemma \ref{bound on bn using an} we have $c_f\an \leq \bn$, for all $P$ prime. We have
    \begin{align}
       \deg \LL & = c_f \deg \PP - (c_f \deg \PP - \deg \LL) = \nonumber \\
       & = c_f d n q^n - \sum_P (c_f\an - \bn ) \deg P \nonumber \\
       & \leq c_f d n q^n - \sum_{\deg P \leq n + \deg f_d} (c_f \an - \bn ) \deg P \label{eq: proof upper 1}\\
       & = c_f d n q^n - c_f\sum_{\deg P \leq n + \deg f_d} \an \deg P + \sum_{\deg P \leq n+\deg f_d} \bn \deg P \nonumber \\
       &= c_f(d-1) n q^n + \sum_{\deg P \leq n + \deg f_d} \bn \deg P + O(q^n)\label{eq: proof upper 2}
    \end{align}
    where (\ref{eq: proof upper 1}) comes from removing negative terms and (\ref{eq: proof upper 2}) from Proposition \ref{small P} and Lemma \ref{P_f(n)}.
    
    It remains to prove that $\sum_{\deg P \leq n+\deg f_d} \bn \deg P = O(q^n)$. From Lemma \ref{estimate an and bn}(i) and the Prime Polynomial Theorem we get
    \begin{equation}\label{eq: bound small primes}
            \sum_{\deg P \leq n + \deg f_d} \bn \deg P  \ll \sum_{\deg P \leq n + \deg f_d} \frac{n}{\deg P} \deg P 
            = \sum_{\deg P \leq n + \deg f_d} n \ll q^n.
    \end{equation}
    
\end{proof}

\section{Heuristic justification for \Cref{the conjecture}}\label{sec: heuristic}

Throughout the present section we maintain the assumption that $f=\sum_{i=0}^df_iX^i\in\F_q[T][X]$ is irreducible. Next we want to study the difference $\deg L_f(n)-c_f(d-1)nq^n$ and we will do so by relating it to a slightly different quantity defined by (\ref{eq: def Sf}) below. To this end let us define an equivalence relation on $M_n$ by
\[\ Q_1 \sim Q_2 \iff f(X + Q_1 - Q_2) = f(X)\iff Q_1-Q_2\in V_f.\]
We note that for sufficiently large $n$ the size of each equivalence class is $|V_f|$ and the number of such classes is $c_fq^n$.
Consider
    \begin{equation}\label{eq: def Sf}
        S_f(n) = \left| \left\{P \in \A \mbox{ prime}: \begin{aligned} \deg P >& n + \deg f_d.\\ \exists\, Q_1 \nsim Q_2 \in M_n& \text{ such that } P \mid f(Q_1), f(Q_2)\end{aligned} \right\}\right|.
    \end{equation}

Note that the condition in the bottom line on the RHS of (\ref{eq: def Sf}) is equivalent to $\beta_P(n)\neq c_f\alpha_P(n)$ (the negation of this condition is that $P$ occurs precisely in all $f(Q)$ where $Q$ runs over a single equivalence class of $\sim$, equivalently $|V_f|\beta_P(n)=\alpha_P(n)$), hence \begin{equation}\label{eq: exp for Sf} S_f(n) = \sum_{\deg P>n + \deg f_d\atop{c_f\alpha_P(n)\neq\beta_P(n)}}1. \end{equation}

\begin{prop}
    \label{Eqiv to conjecture}
    Let $f \in \AX$ be irreducible with $\deg_X f = d \geq 2$.
    \begin{enumerate}
        \item[(i)] $S_f(n)\ll q^n.$
        \item[(ii)] \Cref{the conjecture} is equivalent to $S_f(n) = o(q^n).$
    \end{enumerate}
\end{prop}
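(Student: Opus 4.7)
The plan is to relate $S_f(n)$, up to a factor of $n$, to the ``large-prime'' portion of $c_f\deg P_f(n)-\deg L_f(n)$ and then to read off both parts from the identity
\begin{equation}\label{eq: key identity proposal}\deg L_f(n) \;=\; c_f(d-1)nq^n \;-\; \Sigma(n) \;+\; O(q^n),\qquad \Sigma(n) := \sum_{\deg P > n+\deg f_d}\bigl(c_f\alpha_P(n)-\beta_P(n)\bigr)\deg P.\end{equation}
First I would derive \eqref{eq: key identity proposal} by revisiting the proof of \Cref{Upper bound}: splitting $\sum_P(c_f\alpha_P(n)-\beta_P(n))\deg P$ at the threshold $\deg P = n+\deg f_d$, the small-prime contribution equals $c_f nq^n + O(q^n)$ by \Cref{P_f(n)}, \Cref{small P} and \eqref{eq: bound small primes}, leaving exactly $c_f(d-1)nq^n-\Sigma(n)+O(q^n)$ for $\deg L_f(n)$.

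The technical core is then the two-sided bound $\Sigma(n)\asymp n\,S_f(n)$, which I would prove by a $V_f$-equivalence class analysis on $M_n$. Fix a prime $P$ with $\deg P>n+\deg f_d$. Since $\deg P>n$, distinct elements of $M_n$ have distinct residues modulo $P$, so for $n$ large each equivalence class has size exactly $|V_f|$, either all or none of its members are divisible at $P$, and $v_P(f(Q))$ is constant along each class. Let $[Q_1],\dots,[Q_r]$ be the classes whose members satisfy $P\mid f(Q)$ and set $e_i=v_P(f(Q_i))\ge 1$; then $\alpha_P(n)=|V_f|\sum_i e_i$ and $\beta_P(n)=\max_i e_i$, so
\[c_f\alpha_P(n)-\beta_P(n) \;=\; \sum_{i=1}^r e_i-\max_i e_i,\]
which vanishes when $r\le 1$ and is $\ge 1$ when $r\ge 2$ — the latter being precisely the condition $P\in S_f(n)$. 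Using $r|V_f|\le\rho_f(P)\le d$ and $e_i\le d-1$ (from $e_i\deg P\le\deg f(Q_i)=dn+\deg f_d$ combined with $\deg P>n+\deg f_d$), one obtains $c_f\alpha_P(n)-\beta_P(n)=O(1)$; combined with $\deg P=O(n)$ on the effective range of the sum (empty for $\deg P>dn+\deg f_d$), this yields $n\,S_f(n)\le\Sigma(n)\ll n\,S_f(n)$.

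Parts (i) and (ii) follow immediately. For (i), using $\deg L_f(n)\ge 0$ together with $\Sigma(n)\ge 0$ in \eqref{eq: key identity proposal} gives $\Sigma(n)=O(nq^n)$, hence $S_f(n)\ll q^n$. For (ii), \Cref{the conjecture} amounts to $c_f(d-1)nq^n-\deg L_f(n)=o(nq^n)$, which by \eqref{eq: key identity proposal} is equivalent to $\Sigma(n)=o(nq^n)$ (the $O(q^n)$ error being absorbed), and by the two-sided bound is equivalent to $S_f(n)=o(q^n)$. The only real obstacle I foresee is the bookkeeping in the equivalence-class step — verifying that $V_f$-classes have size exactly $|V_f|$ once $n$ exceeds $\max_{g\in V_f}\deg g$, and that distinct classes within $M_n$ land in distinct residues mod $P$ — both of which reduce to the injectivity of the reduction $M_n\hookrightarrow\F_q[T]/P$ when $\deg P>n$. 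Everything else is a direct reuse of the machinery of \Cref{sec: upper bound}.
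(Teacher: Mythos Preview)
Your proposal is correct and follows essentially the same strategy as the paper: derive identity \eqref{eq: key identity proposal} from the proof of \Cref{Upper bound}, then establish the two-sided bound $\Sigma(n)\asymp n\,S_f(n)$ via the observation that for $\deg P>n+\deg f_d$ the quantity $c_f\alpha_P(n)-\beta_P(n)$ is a bounded nonnegative integer which is nonzero precisely when $P$ is counted by $S_f(n)$. The only tactical differences are that the paper proves (i) by a direct count (each of the $q^n$ values $f(Q)$ has $O(1)$ large prime factors) rather than via $\Sigma(n)$, and that for the bound $c_f\alpha_P(n)-\beta_P(n)=O(1)$ the paper invokes \Cref{estimate an and bn} and \Cref{bound on pn} to get $\alpha_P(n)\ll 1$, whereas your self-contained argument via $r|V_f|\le\rho_f(P)\le d$ and $e_i\le d-1$ is slightly more elementary.
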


\begin{proof}   
    The first part follows immediately from the definition of $S_f(n)$, since there are $q^n$ possible values of $f(Q)$ and each has $O(1)$ prime factors of degree $\deg P>n+\deg f_d$. It remains to prove the second part.

    In the main calculation in the proof of Theorem \ref{Upper bound} there is only one inequality, namely
    $$
        c_f d n q^n - \sum_P (c_f\an - \bn ) \deg P  \leq c_f d n q^n - \sum_{\deg P \leq n + \deg f_d} (c_f \an - \bn ) \deg P.
    $$
    equivalently
    \[ - \sum_{\deg > n + \deg f_d} (c_f \an - \bn ) \deg P \leq 0.\]
    Since the RHS in Conjecture \ref{the conjecture} is $\gg nq^n$, the conjecture is now seen to be equivalent to $$\sum_{\deg > n + \deg f_d} (c_f \an - \bn ) =  o(n q^n).$$ Hence is suffices to prove that
    \begin{equation}\label{eq: need to prove Sfn}S_f(n) = o(q^n) \iff \sum_{\deg P > n + \deg f_d} (c_f \an - \bn) \deg P = o(n q^n).\end{equation}

    For $P \in \A$ prime set $$\delta_P(n) = \begin{cases} 1, & c_f \an \neq \bn,\\ 0, & \text{otherwise}.\end{cases}$$
    Note that for $n$ sufficiently large if $\deg P > dn + \deg f_d $ then $ c_f \an = \bn = 0$. Hence, for $n$ sufficiently large using (\ref{eq: exp for Sf}) we have
    \[S_f(n) = \sum_{n + \deg f_d < \deg P \leq dn + \deg f_d} \delta_P(n) \]
    From \Cref{estimate an and bn}(ii-iii) and \Cref{bound on pn}, for sufficiently large $n$ and $P$ not one of $O(1)$ bad primes, if $\deg P > n$ then $\an \ll 1$, so
    \begin{multline}\label{eq:Sfn 1}
            \sum_{n + \deg f_d < \deg P} (c_f \an - \bn) \deg P  \ll \sum_{n + \deg f_d < \deg P \leq d n+\deg f_d} \delta_P(n) \deg P \\
              \ll \sum_{n + \deg f_d < \deg P \leq d n + \deg f_d} \delta_P(n) n
             = S_f(n) n.
    \end{multline}
    On the other hand, once again using the fact that $c_f\alpha_P(n)=\beta_P(n)$ if $\deg P>dn+\deg f_d$,
    \begin{multline}\label{eq:Sfn 2}
            \sum_{n + \deg f_d < \deg P} (c_f \an - \bn) \deg P  \\  \geq \sum_{n + \deg f_d < \deg P \leq d n + \deg f_d} \delta_P(n) \deg P 
               \geq \sum_{n + \deg f_d < \deg P \leq d n + \deg f_d} \delta_P(n) n =  S_f(n) n,
    \end{multline}
From (\ref{eq:Sfn 1}) and (\ref{eq:Sfn 2}) the equivalence (\ref{eq: need to prove Sfn}) follows, which completes the proof.
\end{proof}

Heuristically one can estimate $S_f(n)$ by arguing that the ``probability" that for $Q_1\not\sim Q_2\in M_n$ the values $f(Q_1),f(Q_2)$ share a prime factor of degree $\deg P>n+\deg f_d$ is (using the Prime Polynomial Theorem)
$$\sum_{m=n+\deg f_d+1}^{\infty}\sum_{\deg P=m}\frac 1{|P|^2}=\sum_{m=n+\deg f_d+1}^{\infty}\frac 1{mq^m}=
O\left(\frac 1{nq^n}\right).$$ Since there are $O(q^{2n})$ pairs $Q_1,Q_2$ we expect $$S_f(n)\ll \frac 1{nq^n}\cdot q^{2n}\ll\frac{q^n}n=o(q^n).$$ By \Cref{Eqiv to conjecture} this is equivalent to \Cref{the conjecture}. 

At this point one may ask whether the equivalence relation $\sim$ is really the dominant source of pairs $Q_1,Q_2\in M_n$ with $f(Q_1)=f(Q_2)$, otherwise we would definitely not have $S_f(n)=o(q^n)$. Over the integers it cannot happen that $f(n_1)=f(n_2)$ for $n_1\neq n_2\in\Z$ sufficiently large (since a polynomial is a monotone function at sufficiently large arguments), but in finite characteristic this is less obvious. Nevertheless this is guaranteed by the following proposition, which will be needed for the proof of \Cref{special = conjecture} as well.

\begin{prop}\label{prop: fq1=fq2} $$\biggl|\biggl\{(Q_1,Q_2)\,:\,Q_1\not\sim Q_2\in M_n,\,f(Q_1)=f(Q_2)\biggr\}\biggr|=O(q^{n/2}). $$\end{prop}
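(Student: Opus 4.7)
The plan is to first factor $F(X,Y) := f(X) - f(Y)$ in $\F_q[T][X,Y]$ by pulling out the trivial linear factors corresponding to $V_f$. Since $f(X+g) = f(X)$ for each $g \in V_f$, the polynomial $Y - X - g$ divides $F$, so we write
\[F(X,Y) = \prod_{g\in V_f}(Y - X - g)\cdot G(X,Y),\qquad G \in \F_q[T][X,Y],\]
with $G$ of total degree $D = d - |V_f|$. Any pair $(Q_1, Q_2)\in M_n^2$ counted in the proposition then satisfies $G(Q_1, Q_2) = 0$, together with $h := Q_2 - Q_1 \notin V_f$ and $\deg h < n$.

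The next key observation is that $G$ has no linear factor $Y - X - h$ for any $h \in \F_q[T] \setminus V_f$; otherwise this factor would divide $F$, forcing $h \in V_f$. Consequently, $G(X, X+h) \in \F_q[T][X]$ is a nonzero polynomial of $X$-degree at most $D$, so for each admissible $h$ there are at most $D$ choices of $Q_1$. This immediately gives the crude bound $O(q^n)$, which must be sharpened.

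To reach $O(q^{n/2})$, I would analyze the Newton polygon of $G(X, X+h)$, viewed as a polynomial in $X$ with coefficients in $\F_q[T][h]$, at the place $h = \infty$. Each slope $s$ of this polygon produces a family of algebraic roots $X$ whose $T$-degree grows as $s\cdot \deg_T h + O(1)$. The condition $Q_1 \in M_n$ combined with $\deg h < n$ rules out the slope $s = 1$ and forces $s > 1$; writing $s = a/b$ in lowest terms therefore implies $a \ge 2$. For such a slope the roots are algebraic of degree $b$ over $\F_q(T, h)$, and the requirement that they descend to $\F_q[T]$ is a ``rationality'' condition restricting $h$ of degree $n/s$ to a thin subset of size $O(q^{n/a})$. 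Since $a \ge 2$ for every non-trivial slope, each slope contributes $O(q^{n/2})$ pairs, and summing over the finitely many slopes yields the claimed bound.

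The main obstacle is making the rationality estimate rigorous: I would need to carefully count the $h$ of a given degree for which the slope-$s$ algebraic roots of $G(X, X+h)$ actually lie in $\F_q[T]$, and to verify the argument uniformly, including when $f$ is inseparable or $p \mid d$ (where slopes can be fractional and driven by Frobenius structure). An alternative route would be to view the pairs $(Q_1, h)$ as $\F_q$-points of a variety $W \subset \mathbb{A}^{2n}_{\F_q}$ cut out by the $T$-coefficients of $G(Q_1, Q_1+h)$ and to deduce $|W(\F_q)| = O(q^{n/2})$ via a Lang--Weil-type bound after using the absence of trivial linear factors in $G$ to control $\dim W$.
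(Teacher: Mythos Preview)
Your plan has a concrete error and leaves the essential step undone. The claim that removing the $V_f$-factors and imposing $\deg h<n=\deg Q_1$ forces the Newton-polygon slope at $h=\infty$ to satisfy $s>1$ is false. Take $f=X^3+T$ over $\F_5$: here $V_f=\{0\}$, and after removing the single factor $Y-X$ one has $G(X,Y)=-(X^2+XY+Y^2)$, irreducible of degree $2$, with $G(X,X+h)=-(3X^2+3Xh+h^2)$, whose Newton polygon at $h=\infty$ is a single segment of slope $1$. More generally, the leading form of $G(X,X+h)$ is homogeneous in $(X,h)$ (it comes from $f_d(X^d-Y^d)/(Y-X)^{|V_f|}$), so slope-$1$ segments are the rule rather than the exception. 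With the $O(1)$ slack you yourself write in $\deg_T Q_1=s\cdot\deg_T h+O(1)$, slope $s=1$ is not excluded by $\deg h<n$; what actually eliminates these branches is a rationality or exact-degree argument on the leading Puiseux coefficient (which is determined by a nontrivial $d$-th root of unity), and your slope bookkeeping does not see this. Even for slopes $s>1$, the ``rationality estimate'' you defer---that only $O(q^{n/a})$ values of $h$ yield a root in $\F_q[T]$---and your Lang--Weil alternative (bounding $\dim W$) are both thin-set statements over $\F_q(T)$ whose proof is no easier than the proposition itself.

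The paper's route is more direct. Factor $f(X)-f(Y)=c\prod_i g_i$ into irreducibles in $\F_q[T][X,Y]$ and handle each $g_i$ separately. If $\deg_{X,Y}g_i\ge 2$, the quantitative Hilbert Irreducibility Theorem over function fields shows that $g_i(Q_1,Y)\in\F_q[T][Y]$ remains irreducible of degree $\ge 2$ for all but $O(q^{n/2})$ choices of $Q_1\in M_n$, hence has no root $Q_2\in\F_q[T]$; this single citation replaces the entire slope and rationality analysis. If $g_i=aX+bY+c$ is linear, one checks that $\zeta:=-a/b\in\F_q$ satisfies $\zeta^d=1$; when $\zeta\ne 1$ the leading $T$-term of $aQ_1+bQ_2+c$ is $(a+b)T^n\ne 0$ for $Q_1,Q_2\in M_n$, so there are no zeros for large $n$, and when $\zeta=1$ one finds $c/b\in V_f$, so any zero has $Q_1\sim Q_2$.
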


Before we prove \Cref{prop: fq1=fq2} we need a couple of auxiliary results.

\begin{lem}\label{lem: hit}
Let $g(X,Y)\in\F_q[T][X,Y]$ be a fixed irreducible polynomial with $\deg_{X,Y} g\ge 2$. The number of pairs $Q_1,Q_2\in M_n$ such that $g(Q_1,Q_2)=0$ is $O(q^{n/2})$.
\end{lem}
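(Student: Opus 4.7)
The plan is to exploit the following dichotomy: the equation $g(Q_1, Q_2) = 0$ means $Q_2$ is a root of the specialized polynomial $g(Q_1, Y) \in \F_q[T][Y]$, and for all but $O(q^{n/2})$ choices of $Q_1 \in M_n$ this polynomial is irreducible over $\F_q(T)$ of $Y$-degree $\geq 2$, hence has no roots in $\F_q(T) \supseteq \F_q[T]$. Quantifying ``most'' via Hilbert irreducibility will give the bound.

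First I dispose of the degenerate cases. If $\deg_Y g = 0$, then $g \in \F_q[T][X]$ is irreducible with $\deg_X g \geq 2$, so has no roots in $\F_q(T)$ and the equation has no solutions at all (symmetrically if $\deg_X g = 0$). If $\deg_X g = \deg_Y g = 1$, the condition $\deg_{X,Y} g \geq 2$ forces a nonzero $XY$-coefficient, and writing $g = aXY + bX + cY + d$ with $ad - bc \neq 0$ (by irreducibility), the equation $g(Q_1, Q_2) = 0$ yields $Q_2 = -(bQ_1 + d)/(aQ_1 + c)$. Then $Q_2 \in \F_q[T]$ forces $(aQ_1 + c) \mid (ad - bc)$, which fails once $n$ is sufficiently large, giving zero solutions.

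In the remaining case, by $X \leftrightarrow Y$ symmetry we may assume $\deg_Y g \geq 2$. Viewing $g$ as a polynomial in $Y$ over the UFD $\F_q[T, X]$ and applying Gauss's lemma, the irreducibility of $g$ in $\F_q[T, X, Y] = \F_q[T, X][Y]$ yields irreducibility in $\F_q(T, X)[Y]$. A quantitative Hilbert irreducibility theorem for the function field $\F_q(T)$ then gives that the number of $Q_1 \in M_n$ for which $g(Q_1, Y)$ factors nontrivially in $\F_q(T)[Y]$ --- and in particular acquires a root in $\F_q(T)$ --- is $O(q^{n/2})$. Each such bad $Q_1$ yields at most $\deg_Y g = O(1)$ choices of $Q_2$, for a total of $O(q^{n/2})$ pairs.

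The hard part is the quantitative Hilbert irreducibility estimate with the sharp exponent $n/2$. The standard approach realizes the extension $\F_q(T, X) \hookrightarrow \F_q(T, X)[Y]/(g)$ geometrically as a finite cover of curves and uses effective Weil-type bounds (essentially the Riemann hypothesis for curves over $\F_q$) on the number of places of bounded height over which the cover gains a rational section. The exponent $1/2$ is sharp, as exhibited by $g = (Y - X)^2 - X$: the specialized polynomial $g(Q_1, Y) = (Y - Q_1)^2 - Q_1$ has a root in $\F_q(T)$ exactly when $Q_1$ is a perfect square in $\F_q[T]$, and for even $n$ there are precisely $q^{n/2}$ monic squares in $M_n$.
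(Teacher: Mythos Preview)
Your proof is correct and follows essentially the same approach as the paper: reduce to the case $\deg_Y g \ge 2$ by symmetry, invoke a quantitative Hilbert irreducibility theorem over $\F_q(T)$ to bound the exceptional $Q_1$ by $O(q^{n/2})$, and note each such $Q_1$ contributes $O(1)$ values of $Q_2$. The only cosmetic difference is in the bilinear case $\deg_X g=\deg_Y g=1$: you argue via the divisibility constraint $(aQ_1+c)\mid(ad-bc)$, whereas the paper simply observes that $AQ_1Q_2$ has strictly larger $T$-degree than the remaining terms once $n$ is large, which is a bit quicker.
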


\begin{proof} If $\deg_Xg=\deg_Yg=1$ then $g=AXY+BX+CY+D,\,A\neq 0,B,C,D\in\F_q[T]$ and one cannot have $g(Q_1,Q_2)=0$ for $Q_1,Q_2\in M_n, n>\max(\deg B,\deg C,\deg D)$ since the term $AQ_1Q_2$ has higher degree than the other terms. Hence assume WLOG that $\deg_Yg\ge 2$, otherwise switch $X$ and $Y$. It follows from the quantitative Hilbert Irreducibility Theorem in function fields \cite[Theorem 1.1]{BaEn21} that for all but $O(q^{n/2})$ polynomials $Q_1\in M_n$, the polynomial $g(Q_1,Y)\in\F_q[T][Y]$ is irreducible of degree $\ge 2$ and therefore has no root $Q_2\in M_n$. For the remaining $O(q^{n/2})$ values of $Q_1$ there are at most $\deg_Yg=O(1)$ roots for each $Q_1$, so overall there are $O(q^{n/2})$ pairs $Q_1,Q_2\in M_n$ with $g(Q_1,Q_2)=0$.
\end{proof}

\begin{lem} \label{lem: zeta} Assume that the polynomial $f(X)-f(Y)\in\F_q[T][X,Y]$ is divisible by $aX+bY+c$ for some $a,b,c\in\F_q[T]$ with $b\neq 0$. Then
\begin{enumerate}
    \item[(i)] $\zeta:=-a/b$ satisfies $\zeta^d=1$ and $\zeta\in\F_q$.
    \item[(ii)] If $\zeta=1$ then $c/b\in V_f$.
\end{enumerate}
\end{lem}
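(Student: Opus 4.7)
The plan is to substitute $Y = \zeta X - c/b$ into the divisibility relation and extract both claims from the resulting identity. With $\zeta = -a/b \in \F_q(T)$ this substitution makes the linear form $aX+bY+c$ vanish, so since $aX+bY+c$ divides $f(X)-f(Y)$ in $\F_q[T][X,Y] \subset \F_q(T)[X,Y]$, the substitution yields the polynomial identity
\[f(X) = f(\zeta X - c/b) \quad \text{in } \F_q(T)[X].\]

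For part (i), I would compare coefficients of $X^d$ on both sides: on the left this is $f_d$, and on the right it is $f_d\zeta^d$. Since $f_d \neq 0$, we get $\zeta^d = 1$. In particular $\zeta \in \F_q(T)$ is algebraic over $\F_p$; and because $\F_q(T)/\F_q$ is purely transcendental, $\F_q$ is algebraically closed in $\F_q(T)$, forcing $\zeta \in \F_q$.

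For part (ii), setting $\zeta = 1$ collapses the identity to $f(X - c/b) = f(X)$, which is precisely the defining relation $f(X+g) = f(X)$ for $g = -c/b$. Invoking closure of $V_f$ under negation (from \Cref{V_f: trivial when degree is not devisable by p}), this gives $c/b \in V_f$.

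The only mildly delicate step is making the substitution fully legitimate when $a,b,c$ live in $\F_q[T]$ but $\zeta, c/b$ generally only in $\F_q(T)$: this is handled by passing from $\F_q[T][X,Y]$ to $\F_q(T)[X,Y]$, in which the linear form $Y - \zeta X + c/b$ differs from $aX+bY+c$ by the unit $b$, so divisibility is preserved and substitution of a $\F_q(T)$-value for $Y$ is legitimate. Once this is set up, both parts of the lemma are immediate from coefficient comparison.
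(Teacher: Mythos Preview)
Your proof is essentially identical to the paper's: substitute to get $f(X)=f(\zeta X - c/b)$, compare leading coefficients for (i), and invoke the $\F_p$-linearity of $V_f$ for (ii). One subtlety worth flagging (present in the paper's proof as well): the conclusion $c/b \in V_f$ tacitly requires $c/b \in \F_q[T]$, which the hypotheses alone do not force---for instance $f=TX^2+X+T$ over $\F_2[T]$ is irreducible with $f(X)-f(Y)=(X+Y)(TX+TY+1)$, and the second factor gives $c/b=1/T\notin\F_2[T]$---though this is harmless in the lemma's sole application, where $-c/b=Q_2-Q_1\in\F_q[T]$ is automatic.
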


\begin{proof}
    The divisibility assumption implies $f(X)=f\left(-\frac ab X-\frac cb\right)$. Comparing coefficients at $X^d$ gives $\zeta^d=1$. Since $\zeta\in\F_q(T)$ and $\F_q$ is algebraically closed in $\F_q(T)$ we have $\zeta\in\F_q$, establishing (i). If $\zeta=-1$ the above identity becomes $f(X)=f\left(X-\frac cb\right)$, i.e. $-c/b\in V_f$. Since $V_f$ is a vector space we obtain (ii).
\end{proof}

We are ready to prove \Cref{prop: fq1=fq2}.

\begin{proof}[Proof of \Cref{prop: fq1=fq2}] Write $f(X)-f(Y)=c\prod_{i=1}^m g_i(X,Y)$ with $c\in\F_q[T]$ and each $g_i\in\F_q[T][X,Y]$ irreducible with $\deg_{X,Y}g_i\ge 1$. It is enough to show that for each factor $g_i(X,Y)$ there are $\ll q^{n/2}$ pairs $Q_1\not\sim Q_2\in M_n$ with $g_i(Q_1,Q_2)=0$. For a given $i$, if we have $\deg_{X,Y}g_i\ge 2$ this follows from \Cref{lem: hit}. If $\deg_{X,Y}g_i=1$, write $g_i=aX+bY+c$, assume WLOG that $b\neq 0$ and denote $\zeta=-a/b$. By \Cref{lem: zeta} we have $\zeta^d=1$.

We distinguish two cases: if $\zeta\neq 1$ then $a\neq -b$ and the leading coefficient of $g_i(Q_1,Q_2)$ is $a+b\neq 0$ for any $Q_1,Q_2\in M_n$ with $n>\deg c$, and therefore there are no such pairs with $g_i(Q_1,Q_2)=0$. If on the other hand $\zeta=1$ (i.e. $a=-b$) then by \Cref{lem: zeta} we have $c/b\in V_f$ and $g_i(Q_1,Q_2)=aQ_1+bQ_2+c=0$ is equivalent to $Q_1+c/b=Q_2$, so $Q_1\sim Q_2$ and there are no pairs with $g_i(Q_1,Q_2)=0$, $Q_1\not\sim Q_2\in M_n$. This completes the proof. 
\end{proof}

\begin{rem}\label{remark: Sfn} It is readily seen from the proof that if $f$ is special in the sense of \Cref{special defention} then we may put 0 on the RHS of \Cref{prop: fq1=fq2} for $n$ sufficiently large, since in this case $\deg_{X,Y}g_i=1$ for all $i$. \end{rem}

\section{The lower bound}
\label{sec: lower bound}

\begin{lem}
\label{lower bound lemma}
     Let $P \in \A$ be a prime such that 
     \begin{enumerate}
         \item[(a)] $\deg P > \deg f_d + n$.
         \item[(b)] $P\nmid\mathrm{Res}(f,f')$ if $f$ is separable.
         \item[(c)] $P\nmid\mathrm{Res}(f,\frac{\partial f}{\partial T})$ if $f$ is inseparable.
     \end{enumerate} 
     Denote $$B_i  = B_i(P) := \left| \{Q \in M_n : P^i \mid f(Q)\neq 0\}\right|.$$ Then \begin{enumerate}
         \item[(i)] $B_1\le d$.
         \item[(ii)] $B_i\le B_1,\,i\ge 1$.
         \item[(iii)] $B_i=0$ if $i>d$.
     \end{enumerate}
\end{lem}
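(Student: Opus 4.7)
The plan is to prove the three parts separately, each by a short and elementary argument; I do not anticipate any serious obstacle, as the whole lemma is a bookkeeping step preparing for the lower-bound argument.

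For part (i), I would exploit condition (a), which in particular gives $\deg P > n$. This implies that the reduction map $M_n \to \F_q[T]/P$ is injective: two distinct elements of $M_n$ differ by a nonzero polynomial of degree at most $n < \deg P$, which cannot be divisible by $P$. Hence $B_1$ is bounded by the number of distinct $Q \bmod P$ with $f(Q)\equiv 0 \bmod P$, and since $\F_q[T]/P$ is a field in which $f$ has degree $d$ in $X$, there are at most $d$ such residues.

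For part (ii), the containment $\{Q \in M_n : P^i \mid f(Q)\neq 0\} \subseteq \{Q \in M_n : P \mid f(Q)\neq 0\}$ is immediate from $P^i \mid f(Q) \Rightarrow P \mid f(Q)$, so $B_i \le B_1$.

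For part (iii), I would argue by degree comparison. For $Q \in M_n$ and $n$ sufficiently large we have $\deg f(Q) = dn + \deg f_d$. If $P^i$ divides the nonzero value $f(Q)$ then $i \deg P \le dn + \deg f_d$, while condition (a) gives $\deg P \ge n + \deg f_d + 1$. Combining these yields
\[i(n + \deg f_d + 1) \le dn + \deg f_d,\]
equivalently $(i-d)n + (i-1)\deg f_d + i \le 0$, which fails for every $i \ge d+1$. Hence $B_i = 0$ for $i > d$. Conditions (b) and (c) do not appear to be needed in the proof of this lemma itself; they are presumably included so that in subsequent applications of the setup the Hensel-type lemmas (\Cref{Hensel for seprable} and \Cref{Hensel for inseprable}) can be invoked on the same set of primes without re-enumerating hypotheses.
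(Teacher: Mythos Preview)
Your proof is correct. Parts (i) and (iii) follow the same line as the paper. For part (ii) your argument differs: you use the trivial set containment
\[
\{Q \in M_n : P^i \mid f(Q)\neq 0\} \subseteq \{Q \in M_n : P \mid f(Q)\neq 0\},
\]
whereas the paper invokes the Hensel-type Lemmas~\ref{beta for Hensel} and~\ref{Hensel for inseprable} (using hypotheses (b) and (c)) to show that each root of $f$ modulo $P$ lifts uniquely modulo $P^i$ in the separable case, and that $\rho_f(P^i)=0$ for $i>1$ in the inseparable case, and only then deduces $B_i\le B_1$. Your route is strictly simpler and, as you correctly note, shows that hypotheses (b) and (c) are unnecessary for the lemma as stated; in the subsequent application (the proof of \Cref{lower bound}(i)) only the bare inequality $B_i\le B_1\le d$ together with part (iii) is used, so nothing is lost.
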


\begin{proof}
Observe that since $ n + \deg f_d < \deg P^i$, $M_n$ is mapped injectivly into $\A / P^i$ by $Q\mapsto f(Q)\bmod P^i$.

{\bf (i).} By the observation above $B_1\le \rho_f(P)\le\deg(f\bmod P)=d$, so $B_1\le d$.

    {\bf (ii).} If $f$ is separable then \Cref{beta for Hensel} combined with assumption (b) applied iteratively with $\alpha=1,2,\ldots,i-1$ implies that each root of $f$ modulo $P$ has a unique lift to a root modulo $P^i$. Combined with the observation above this implies $B_i\le B_1$. If $f$ is inseparable then by \Cref{Hensel for inseprable} and assumption (c) we have $B_i\le\rho_f(P^i)=0\le B_1$ for $i>1$.
    
    {\bf(iii).} Finally suppose that $i > d$. If there exists $Q \in \A$ such that $P^i \mid f(Q)\neq 0$, then $\deg f(Q) \leq d n + \deg f_d$, while $dn + \deg f_d < i \deg P = \deg P^i$, a contradiction. Hence $B_i=0$ in this case.
\end{proof}

We are ready to prove the first part of \Cref{lower bound}.

\begin{proof}[Proof of \Cref{lower bound}(i)]
From \Cref{lower bound lemma} we get that for all but $O(1)$ primes $P \in \A$ (note that by \Cref{non zero resultant} conditions (b-c) of \Cref{lower bound lemma} are satisfied for all but $O(1)$ primes) such that $\deg P > n + \deg f_d$ we have 
$$\an =\sum_{i\ge 1}\sum_{Q\in M_n\atop{P^i|f(Q)}}1=\sum_{i\ge 1\atop{B_i>0}}B_i\leq d\sum_{i\ge 1\atop{B_i>0}}1 \leq d\bn. $$
Now using \Cref{P_f(n)} and \Cref{small P} we obtain
\begin{equation*}
        \begin{split}
           (d-1)n q^n  \lesssim \deg \frac{P_f(n)}{R_f(n)} = \sum_{\deg P > n + \deg f_d} \an \deg P  \leq d \sum_{\deg P > n + \deg f_d} \bn \deg P \leq d \deg L_f(n), 
        \end{split}
\end{equation*}
hence $\deg L_f(n)\gtrsim\frac{d-1}dnq^n$ as required.
\end{proof}

\begin{rem} There exist polynomials $f$ for which the lower bound given by in \Cref{lower bound}(i) and the upper bound given by \Cref{Upper bound} are equal. Indeed, if $|V_f| = d$ (as in \Cref{ex:1}), then combining \Cref{Upper bound} and \Cref{lower bound}(ii) gives $\deg L_f(n) \sim \frac{d-1}{d}n q^n$ as $n\to\infty$. Hence the lower bound is tight in this case.\end{rem}

To prove the second part of \Cref{lower bound} we first need the following

\begin{lem}
    \label{restricted lower bound lemma}
    Assume $p \nmid d$ or $f_d \nmid f_{d-1}$.
Let $n$ be sufficiently large and $P \in \A$ a prime such that $\deg P > \deg f_d + n$. Then in the notation of \Cref{lower bound lemma} we have  
    $B_1(P) \leq d-1$.
\end{lem}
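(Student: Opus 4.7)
The plan is to argue by contradiction: assume $B_1(P) = d$ and derive a constraint that forces either $p\mid d$ and $f_d\mid f_{d-1}$, contradicting the hypothesis. First, I would use the observation that $\deg P > n$ makes the reduction map $Q \mapsto Q \bmod P$ injective on $M_n$, so if $B_1(P) = d$ there are $d$ distinct elements $Q_1,\ldots,Q_d \in M_n$ whose reductions modulo $P$ are $d$ distinct roots of $f$ in the field $\F_q[T]/P$. Since $\deg_X f = d$, this forces the complete factorization
\[ f(X) \equiv f_d \prod_{i=1}^{d}(X - Q_i) \pmod{P}. \]

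Next I would compare coefficients of $X^{d-1}$ to obtain $f_{d-1} \equiv -f_d\sum_{i=1}^{d} Q_i \pmod P$, i.e.\ $P$ divides the polynomial $D := f_{d-1} + f_d\sum_{i=1}^{d}Q_i \in \F_q[T]$. The key is to estimate $\deg_T D$ and compare with $\deg P > n + \deg f_d$. In the case $p \nmid d$, the sum $\sum Q_i$ is a sum of $d$ monic polynomials of degree $n$ whose leading coefficient $d \in \F_q^\times$ does not vanish, so $D$ has degree exactly $n + \deg f_d < \deg P$ for $n$ larger than $\deg f_{d-1}$; thus divisibility by $P$ forces $D = 0$, which is impossible since the top coefficient of $D$ is $d\cdot(\text{lead coef of }f_d) \neq 0$. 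In the case $p \mid d$ (but $f_d \nmid f_{d-1}$), the leading terms of the $Q_i$ cancel, so $\deg \sum Q_i \le n-1$ and $\deg D \le n-1+\deg f_d < \deg P$; again $D = 0$, i.e.\ $f_{d-1} = -f_d \sum Q_i$, which contradicts $f_d \nmid f_{d-1}$.

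Both branches yield a contradiction, so $B_1(P) \le d-1$. The main thing to be careful about is the degree bookkeeping in the two cases — in particular, that in the inseparable-characteristic case one genuinely uses the hypothesis $f_d \nmid f_{d-1}$, since otherwise the identity $f_{d-1} = -f_d \sum Q_i$ is consistent and no contradiction arises. I do not anticipate a serious obstacle; the argument is essentially Vieta's formula combined with the size comparison $\deg P > n + \deg f_d$ that makes congruence modulo $P$ force exact equality of low-degree polynomials.
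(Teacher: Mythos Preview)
Your proposal is correct and follows essentially the same approach as the paper: assume $d$ roots modulo $P$, expand $f$ via Vieta, compare the $X^{d-1}$ coefficient, and use the degree bound $\deg P > n+\deg f_d$ to force $f_{d-1}+f_d\sum Q_i=0$, whence $p\mid d$ and $f_d\mid f_{d-1}$. The only cosmetic difference is that you branch on the hypothesis first while the paper first derives $D=0$ and then extracts both conditions from it.
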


\begin{proof}
    
    Assume by way of contradiction that there exist distinct $Q_1,..., Q_d$ such that $P \mid f(Q_1), ... , f(Q_d)$. The assumption $\deg P>\deg f_d+n$ implies that $Q_i\bmod P$ are also distinct and therefore 
    \[f(X) \equiv f_d \prod_{j = 1}^d(x - Q_j) \pmod P\]
    Comparing coefficients at $X^{d-1}$ we obtain 
    \[f_{d-1} \equiv  - f_d \sum_{j=1}^d Q_j \pmod P\]
    or
    \[f_{d-1}  + f_d \sum_{j=1}^d Q_j \equiv 0 \pmod P.\]
    Thus $P \mid f_{d-1}  + f_d \sum_{j=1}^d Q_j$ and if $f_{d-1}  + f_d \sum_{j=1}^d Q_j \neq 0$ we would have
    $\deg P \leq n + \deg f_d$ (if $n\ge f_{d-1}$), a contradiction.

    Now we observe that as $Q_1,..., Q_d$ are monic, if  $\deg\sum_{j=1}^{d} Q_j<n$ then $p \mid d$. Therefore for a sufficiently large $n$ if $f_{d-1}  + f_d \sum_{j=1}^d Q_j = 0$ then $p \mid d$ and additionally $f_{d}\sum_{j=1}^{d} Q_j = - f_{d-1} $, hence $ f_d \mid f_{d-1}$. This contradicts our initial assumption and we have reached a contradiction in all cases.
\end{proof}

We are ready to prove the second part of \Cref{lower bound}.

\begin{proof}[Proof of \Cref{lower bound}(ii)]
    From \Cref{restricted lower bound lemma} we get that for $P \in \A$ prime such that $\deg P > n + \deg f_d$ we have  $B_1(P) \leq d-1$. The rest of the proof is now the same as the proof of \Cref{lower bound}(i), with the inequality $\alpha_P(n)\le d\beta_P(n)$ replaced by $\alpha_P(n)\le (d-1)\beta_P(n)$, which has the effect of replacing the constant $\frac{d-1}d$ by 1 in the conclusion.
\end{proof}

\section{The radical of $L_f(n)$}\label{sec: rad}

We assume for simplicity of exposition that $f$ is irreducible, although this assumption can be relaxed to $f$ being squarefree in Theorem \ref{thm:rad} and its proof, which we give in the present section.
Recall that we denote $\ell_f(n)=\mathrm{rad}\, L_f(n)$ and observe that
$$\deg\ell_f(n)=\sum_{P\atop{\beta_P(n)>0}}\deg P.$$
By \Cref{Upper bound} and \Cref{lower bound}(i) we have $nq^n\ll \deg L_f(n)\ll nq^n$ and the asymptotic $\deg \ell_f(n) \sim \deg L_f(n)$ (\Cref{thm:rad}) is equivalent to the following   
\begin{prop}
\label{diff betwen L and ell}
    \[\deg L_f(n) - \deg \ell_f(n) = o(n q^n).\]
\end{prop}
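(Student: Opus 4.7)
The plan is to bound $\deg L_f(n)-\deg\ell_f(n)$ by the total ``squarefull part'' of the values $f(Q)$. For $Q\in M_n$ let $I_Q:=f(Q)/\mathrm{rad}(f(Q))$, so that $\deg I_Q=\sum_{P}(v_P(f(Q))-1)^+\deg P$. Since $\beta_P(n)=\max_{Q\in M_n}v_P(f(Q))$, the inequality $(\beta_P(n)-1)^+\le\sum_Q(v_P(f(Q))-1)^+$ (immediate from a single term when $\beta_P(n)\ge 2$, trivial otherwise) gives
\[
\deg L_f(n)-\deg\ell_f(n)=\sum_P(\beta_P(n)-1)^+\deg P\;\le\;\sum_{Q\in M_n}\deg I_Q\;=\;\sum_P\deg P\sum_{j\ge 2}s_f(P^j,n).
\]
It therefore suffices to show the right-hand side is $o(nq^n)$. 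I would split the outer sum at $\deg P=n+\deg f_d$ and handle the two ranges separately.

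For $\deg P\le n+\deg f_d$, \Cref{bound on pn} gives the uniform bound $\rho_f(P^j)\ll 1$, and combined with $s_f(P^j,n)\le\rho_f(P^j)(q^n/|P|^j+1)$ this yields
\[
\sum_{j\ge 2}s_f(P^j,n)\ll\frac{q^n}{|P|^2}+\frac{n}{\deg P},
\]
the second summand coming from the indices $j$ with $n<j\deg P\le dn+O(1)$. Multiplying by $\deg P$ and summing, the first piece contributes $O(q^n)$ since $\sum_P\deg P/|P|^2$ converges, and the second contributes $O(n)\cdot|\{P:\deg P\le n+\deg f_d\}|=O(q^n)$ by the Prime Polynomial Theorem. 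Hence the entire range contributes $O(q^n)=o(nq^n)$.

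For $\deg P>n+\deg f_d$, I would regroup the double sum by $Q$:
\[
\sum_{\deg P>n+\deg f_d}\deg P\sum_{j\ge 2}s_f(P^j,n)=\sum_{Q\in M_n}\sum_{\substack{\deg P>n+\deg f_d\\P^2\mid f(Q)}}(v_P(f(Q))-1)\deg P.
\]
Each inner sum is bounded by $\sum_{P\mid f(Q)}v_P(f(Q))\deg P=\deg f(Q)=dn+O(1)$ and is nonzero only when $f(Q)$ is divisible by the square of some prime of degree $>n+\deg f_d$, so the right-hand side is at most $(dn+O(1))N(n)$, where $N(n):=|\{Q\in M_n:\exists P\text{ prime with }\deg P>n+\deg f_d\text{ and }P^2\mid f(Q)\}|$. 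Here I would invoke the quantitative form of Poonen's squarefree theorem \cite{Poo03} (reducing to the separable case via \Cref{Replacment to inseparable polinomial}, or using the Lando/Carmon generalization): for any $M(n)\to\infty$, only $o(q^n)$ elements $Q\in M_n$ have $f(Q)$ divisible by the square of a prime of degree $>M(n)$. Applying this with $M(n)=n+\deg f_d$ gives $N(n)=o(q^n)$, so this range contributes $o(nq^n)$.

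Combining the two ranges yields $\sum_Q\deg I_Q=O(q^n)+o(nq^n)=o(nq^n)$, which proves the proposition. The main obstacle is precisely the large-prime range: the per-$Q$ bound $\deg f(Q)=dn+O(1)$ is essentially tight, and one genuinely needs Poonen's geometric input -- that only a vanishing fraction of $Q\in M_n$ have $f(Q)$ divisible by the square of a very large prime -- to avoid the trivial $O(nq^n)$ bound. This ingredient has no unconditional analogue over $\Z$, consistent with the introductory remark that the analogous statement there is only known conditionally on ABC.
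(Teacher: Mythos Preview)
Your proof is correct and follows essentially the same approach as the paper: split into small primes ($\deg P\le n+\deg f_d$) and large primes, bound the small-prime contribution by $O(q^n)$ using elementary estimates, and handle the large primes by showing that only $o(q^n)$ of the $Q\in M_n$ have $f(Q)$ divisible by the square of a large prime, invoking \cite[Lemma 7.1]{Poo03}. The paper's write-up is slightly more direct---it bounds the small-prime part via the already-established estimate $\sum_{\deg P\le n+\deg f_d}\beta_P(n)\deg P=O(q^n)$ from (\ref{eq: bound small primes}) rather than passing through $\sum_Q\deg I_Q$, and it does not attempt to reduce the inseparable case via \Cref{Replacment to inseparable polinomial} (that lemma only matches $\rho_f(P)$ with $\rho_h(P)$ and says nothing about $P^2\mid f(Q)$); it simply cites Poonen/Lando/Carmon for squarefree $f$---but the structure and the decisive ingredient are the same.
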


\begin{proof}
    Let us write
    \[\ell_f(n) = \sum_{\deg P > n\atop{\beta_P(n)>0}} \deg P + \sum_{\deg P \leq n\atop{\beta_P(n)>0}} \deg P,\]
    \[L_f(n) = \sum_{\deg P > n} \bn\cdot \deg P + \sum_{\deg P \leq n} \bn\cdot \deg P.\]
    Now using (\ref{eq: bound small primes}) we obtain $$\sum_{\deg P \leq n\atop{\beta_P(n)>0}}  \deg P \leq \sum_{P \leq n + \deg f_d}  \bn\cdot\deg P = O(q^n)$$ and therefore
    \[\ell_f(n) = \sum_{\deg P > n\atop{\beta_P(n)>0}} \deg P + O(q^n),\]
    \[L_f(n) = \sum_{\deg P > n\atop{\beta_P(n)>0}} \beta_P(n)\deg P + O(q^n).\]
    Denote $$S:=\{Q \in M_n:  \exists P \text{ prime, } \deg P > n,\, P^2 \mid  f(Q)\}.$$ 
    We have
    \begin{equation*}
        \begin{split}
             0\le \deg \LL - \deg \ell_f(n) &= \sum_{\deg P > n} \bn \deg P -  \sum_{\deg P > n\atop{\beta_P(n)>0}} \deg P + O(q^n)\\
            & \leq \sum_{\deg P > n\atop{\bn \geq 2}} \bn \deg P  +O(q^n)\\
            & \leq \sum_{Q \in S} \deg f(Q) + O(q^n)\\
            & \ll |S| n + q^n.
        \end{split}
    \end{equation*}
    It remains to prove that $|S| = o(q^n)$ but by \cite[Lemma 7.1]{Poo03}, the set 
    $$W=\{\deg Q \leq n:  \exists P \text{ prime, } \deg P > n/2,\, P^2 \mid  f(Q)\}$$ is of size $o(q^n)$ (this is the key ingredient of the proof; see also \cite[Proposition 3.3]{Lan15} and \cite[(4.7)]{carmon2021square} for refinements of this statement). Since $S \subset W$ we have $|S| = o(q^n)$ and we are done.
\end{proof}

As noted above, this concludes the proof of \Cref{ell = L}.

\begin{rem}\label{rem: ABC} Assuming the ABC conjecture, one can prove that $\log \ell_f(N)\sim\log L_f(N)$ over $\Z$. The argument is essentially the same as in the proof of Theorem \ref{thm:rad} above, using the fact that under ABC one has
$$\big|\{n\le N:\,\exists p \mbox{ prime},\,p>N,\,p^2|f(n)\}\big|=o(N),$$
which follows from \cite[Theorem 8]{Gra98}.\end{rem}

\section{Asymptotics of $L_f(n)$ for special polynomials}\label{sec: special asym}

In the present section we assume that $f=\sum_{i=0}^df_iX^i\in\F_q[T][X],\,\deg f=d\ge 2$ is irreducible and special in the sense of \Cref{special defention}. We will show that $\deg L_f(n)\sim c_fnq^n$ as $n\to\infty$, thus proving \Cref{special = conjecture}.
As in \Cref{sec: heuristic} we consider the equivalence relation on $M_n$ given by $Q_1\sim Q_2\iff Q_1-Q_2\in V_f$ and the quantity $S_f(n)$ defined by (\ref{eq: def Sf}).

By the definition of a special polynomial we may write 
\begin{equation}\label{eq: factor fxfy} f(X)-f(Y)=\prod_{i=1}^d(a_iX+b_iY+c_i),\quad a_i,b_i,c_i\in\F_q[T].\end{equation} Comparing degrees in $X$ and $Y$ shows that $a_i,b_i\neq 0$. Comparing coefficients at $X^d$ and $Y^d$ in (\ref{eq: factor fxfy}) we see that $\deg a_i,\deg b_i\le\deg f_d$. Therefore if $Q_1,Q_2\in M_n$ with $n> \deg c_i$, we have \begin{equation}\label{eq:abcdeg}\deg(a_iQ_1+b_iQ_2+c_i)\le n+\deg f_d.\end{equation} 

Let $Q_1,Q_2\in M_n$ with $Q_1\not\sim Q_2$ and $P$ a prime with $\deg P>n+\deg f_d$ such that $P\,|\,f(Q_1),f(Q_2)$. Then in particular $P\,|\,f(Q_1)-f(Q_2)$ and by (\ref{eq: factor fxfy}) we have $P\,|\,a_iQ_1+b_iQ_2+c_i\neq 0$ for some $1\le i\le d$. By (\ref{eq:abcdeg}) and the condition $\deg P>n+\deg f_d$ we must have $a_iQ_1+b_iQ_2+c_i=0$ and therefore by (\ref{eq: factor fxfy}) we have $f(Q_1)=f(Q_2)$.
By \Cref{prop: fq1=fq2} combined with \Cref{remark: Sfn} this is impossible for $n$ sufficiently large. Hence we have $S_f(n)=0$ for $n$ sufficiently large and by \Cref{Eqiv to conjecture} it follows that \Cref{the conjecture} holds for $f$, concluding the proof of \Cref{special = conjecture}.

\section{Classification of special polynomials}\label{sec: class special}

In the present section we classify the special polynomials (\Cref{special defention}) over an arbitrary unique factorization domain (UFD) $R$, establishing \Cref{clasifaction of spesial polinomials}. We denote by $p$ the characteristic of $R$ (as $R$ is a domain, $p$ is zero or a prime) and by $K$ its field of fractions. Also if $p > 0$ we will denote by $\mathbb{F}_p$ its prime subfield. Note that as $R$ is a UFD, so are the polynomial rings $R[X], R[X, Y]$. For a polynomial $g\in R[X,Y]$ we denote by $\deg g$ its total degree. Special polynomials $f\in R[X]$ were defined in \Cref{special defention}.

\begin{lem}
\label{lemma non zero coefficients in special polynomials}
     Let $f\in R[X]$ be a special polynomial of degree $d$. Then we may write
     \begin{equation}\label{eq:fxfy}f(X)-f(Y)=\prod_{i=1}^d (a_i X + b_i Y + c_i),\quad a_i\neq 0,b_i\neq 0,c_i \in  R.\end{equation}
\end{lem}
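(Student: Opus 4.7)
The plan is to take any factorization $f(X)-f(Y)=\prod_{i=1}^d(a_iX+b_iY+c_i)$ guaranteed by the definition of special ($a_i,b_i,c_i\in R$), and show that in this very factorization all $a_i$ and all $b_i$ must already be nonzero. No repackaging of the factors is needed.

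First I would specialize $Y=0$ to obtain the identity
\[
f(X)-f(0)=\prod_{i=1}^d(a_iX+c_i)\qquad\text{in }R[X].
\]
The left-hand side is a nonzero polynomial in $X$ of degree exactly $d$ (its leading coefficient is $f_d\neq 0$, which belongs to $R$ since $f\in R[X]$). The right-hand side is a product of $d$ polynomials in $R[X]$, each of the form $a_iX+c_i$ and hence of degree at most $1$. Because $R$ is a UFD, in particular a domain, so is $R[X]$, and degrees of nonzero polynomials are additive under multiplication. A sum of $d$ integers each at most $1$ can equal $d$ only if each summand is exactly $1$; therefore every factor $a_iX+c_i$ must have degree exactly $1$, i.e. $a_i\neq 0$ for all $i$. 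Symmetrically, specializing $X=0$ gives
\[
f(0)-f(Y)=\prod_{i=1}^d(b_iY+c_i),
\]
and the identical argument forces $b_i\neq 0$ for all $i$.

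There is essentially no obstacle here; the only point worth flagging is the need to rule out a ``degenerate'' factor with $a_i=c_i=0$ (which would make the product identically zero in $X$). This is automatic from the degree-count above, since the left-hand side $f(X)-f(0)$ is a nonzero polynomial of degree $d$, so in particular no factor can vanish. The claim about $c_i\in R$ requires nothing beyond what the definition already provides.
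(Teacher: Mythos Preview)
Your argument is correct and is essentially the same as the paper's: the paper simply says the condition $a_i,b_i\neq 0$ ``follows by comparing degrees in $X$ and $Y$'' in the factorization, which is exactly what you do after specializing $Y=0$ (resp.\ $X=0$). Your version makes the degree count slightly more explicit and handles the degenerate zero-factor case, but the underlying idea is identical.
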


\begin{proof}
    This is immediate from the definition, except for the condition $a_i,b_i\neq 0$ which follows by comparing degrees in $X$ and $Y$ in (\ref{eq:fxfy}).
\end{proof}

\begin{lem}
\label{lemma for coefficients of special polynomials }
    Let $f=\sum_{i=0}^d f_iX^i\in R[X]$ be a special polynomial of degree $d$.
    \begin{enumerate}\item[(i)]
    If $p=0$ then \[f(X) - f(Y) = f_d\prod_{j=1}^{d} \left(X - \zeta^j Y - b^{(j)}\right),\]
    where $\zeta\in R$ is a primitive $d$-th root of unity and $b^{(j)}\in K$.
    \item[(ii)] If $p>0$ write $d=p^lm$ with $(m,p)=1$. Then \[f(X) - f(Y) = f_d\prod_{i=1}^{p^l} \prod_{j=1}^{m} \left(X - \zeta^j Y - b_i^{(j)}\right),\] where $\zeta\in R$ is a primitive $m$-th root of unity and $b_i^{(j)}\in K$.
    \end{enumerate}
\end{lem}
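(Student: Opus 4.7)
I start from the factorization of $f(X)-f(Y)$ as a product of $d$ linear forms guaranteed by Lemma \ref{lemma non zero coefficients in special polynomials}, normalize in $K[X,Y]$, identify the resulting scalars $\alpha_i$ as roots of unity, and organize the multi-set of factors via the affine stabilizer group $G=\{(\alpha,\gamma)\in K^\times\times K\,:\,f(\alpha Y+\gamma)=f(Y)\}$. The characteristic-zero case (i) and the base case $l=0$ of (ii) are short. In characteristic $p>0$ I induct on $l$; the main obstacle is the inseparable step $f'=0$, where I descend to $h$ with $f(X)=h(X^p)$ and must reconstruct $f$'s factorization from that of $h$.

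\textbf{Normalization.} Dividing each factor by $a_i\in R\setminus\{0\}$ in $K[X,Y]$ and matching coefficients of $X^d$ gives
\begin{equation*}
f(X)-f(Y)=f_d\prod_{i=1}^d(X-\alpha_iY-\gamma_i),\qquad \alpha_i,\gamma_i\in K.
\end{equation*}
Divisibility of $X-\alpha_iY-\gamma_i$ is equivalent to $f(\alpha_iY+\gamma_i)=f(Y)$ as an identity in $K[Y]$; comparing $Y^d$-coefficients yields $\alpha_i^d=1$, which in characteristic $p$ refines to $\alpha_i^m=1$ via $X^d-1=(X^m-1)^{p^l}$. The group $G$ sits in an extension $1\to V_f\to G\xrightarrow{\pi}\pi(G)\to 1$ with $\pi(G)\subseteq\mu_m(K)$; the distinct linear factors of $f(X)-f(Y)$ are in bijection with $G$. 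Since each $(\alpha_0,\gamma_0)\in G$ induces a substitution $Y\mapsto\alpha_0Y+\gamma_0$ fixing $f(X)-f(Y)$ and permuting the distinct linear factors transitively, all distinct factors share a common multiplicity $k$, so $|G|\cdot k=d$.

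\textbf{Characteristic zero and the base case.} In characteristic $0$ we have $V_f=\{0\}$ (Remark \ref{rem: V_f char 0}) and $f'\neq 0$, so $k=1$ and all $(\alpha_i,\gamma_i)$ are distinct; hence the $\alpha_i$'s exhaust the $d$-th roots of unity, $K$ (and therefore $R$, by integral closure) contains a primitive $d$-th root $\zeta$, and relabeling produces the claim. For the base case $l=0$ of (ii) the argument is identical: $V_f=\{0\}$ by Lemma \ref{V_f: trivial when degree is not devisable by p}(iv), and $f'$ has nonzero leading term $df_dX^{d-1}$ (as $d\neq 0$ in $K$).

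\textbf{Inductive step in characteristic $p$ for $l\ge 1$.} In the \emph{separable} sub-case $f'\neq 0$, squarefreeness of $f(X)-f(Y)$ in $X$ forces $k=1$ and $|G|=d=p^lm$; since $|V_f|$ is a power of $p$ and $|\pi(G)|$ divides $m$ with $(m,p)=1$, the $p$-valuations pin down $|V_f|=p^l$ and $|\pi(G)|=m$. Thus $\mu_m\subset K$, a primitive $m$-th root $\zeta\in R$ exists, and for each $j$ the $p^l$ admissible $\gamma$'s form a coset of $V_f$, giving the desired factorization. The main obstacle is the \emph{inseparable} sub-case $f'=0$: write $f(X)=h(X^p)$ with $h\in R[X]$ of degree $p^{l-1}m$. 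The identity $0=f'(X)=f_d\,k\,h_0^{k-1}\partial_Xh_0$ (where $h_0$ denotes the product of distinct linear factors), together with the fact that $X$-squarefreeness of $h_0$ rules out $\partial_Xh_0=0$, forces $p\mid k$; write $k=pk'$. In characteristic $p$, $h_0^p=\prod(X^p-\alpha^p Y^p-\gamma^p)=\tilde h_0(X^p,Y^p)$ where $\tilde h_0(u,v):=\prod(u-\alpha^p v-\gamma^p)\in K[u,v]$, so $h(u)-h(v)=f_d\,\tilde h_0(u,v)^{k'}$ splits into linear factors in $u,v$ and $h$ is special. The induction hypothesis yields
\begin{equation*}
h(u)-h(v)=f_d\prod_{i=1}^{p^{l-1}}\prod_{j=1}^m(u-\tilde\zeta^jv-\tilde b_i^{(j)})
\end{equation*}
with $\tilde\zeta\in R$ a primitive $m$-th root of unity. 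Substituting $u=X^p,\,v=Y^p$, each factor $X^p-\tilde\zeta^jY^p-\tilde b_i^{(j)}$ must split completely in $K[X,Y]$ (since $f$ is special), which happens iff both $\tilde\zeta^j$ and $\tilde b_i^{(j)}$ are $p$-th powers in $K$. Bijectivity of Frobenius on $\mu_m$ (because $(p,m)=1$) supplies a unique primitive $m$-th root $\zeta\in R$ with $\zeta^p=\tilde\zeta$, and each $\tilde b_i^{(j)}=(b_i^{(j)})^p$ for some $b_i^{(j)}\in K$. Then $X^p-\tilde\zeta^jY^p-\tilde b_i^{(j)}=(X-\zeta^jY-b_i^{(j)})^p$, and re-indexing the $p$-fold repetitions produces the desired factorization $f(X)-f(Y)=f_d\prod_{i=1}^{p^l}\prod_{j=1}^m(X-\zeta^jY-b_i^{(j)})$.
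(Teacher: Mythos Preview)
Your proof is correct (with one small caveat below), but it takes a substantially more elaborate route than the paper's. The paper's argument is a direct five-line computation: comparing the degree-$d$ homogeneous parts on both sides of $f(X)-f(Y)=\prod_i(a_iX+b_iY+c_i)$ gives $f_d(X^d-Y^d)=\prod_i(a_iX+b_iY)$, and after normalizing and setting $Y=1$ one obtains $X^d-1=\prod_i(X-\mu_i)$ with $\mu_i=-b_i/a_i\in K$. This immediately shows that the multiset $\{\mu_i\}$ consists of \emph{all} $d$-th roots of unity (each $m$-th root with multiplicity $p^l$ when $p>0$), so $K$, and hence $R$ by integral closedness, contains a primitive $m$-th root; the claimed factorization follows at once. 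No case split, no group $G$, no induction. Your approach instead only shows that each $\alpha_i$ is \emph{some} $m$-th root of unity, and then works to prove that a \emph{primitive} one actually appears---via the affine stabilizer group, the common-multiplicity argument, a separable/inseparable dichotomy, and Frobenius descent with induction on $l$. This is all valid and extracts extra structure along the way (for instance $|V_f|=p^l$ in the separable sub-case), but none of that is required for the lemma as stated. One minor gap: in the inseparable step your induction hypothesis is applied to $h$ of degree $p^{l-1}m$, but when $l=1$ and $m=1$ this degree is $1$, so $h$ is not ``special'' by definition and the hypothesis does not literally apply. The boundary case $d=p$ is of course trivial ($f(X)=f_dX^p+f_0$, hence $f(X)-f(Y)=f_d(X-Y)^p$), but you should say so explicitly.
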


\begin{proof}
    For brevity we only treat the case $p>0$, the case $p=0$ being similar to the case $p>0,\,l=0$. Write $f(X) - f(Y) = \prod_{i=1}^d (a_i X +b_i Y + c_i)$ as in \Cref{lemma non zero coefficients in special polynomials}. 
    Note that $$\prod_{i=1}^d (a_i X + b_i Y + c_i) = \prod_{i=1}^d (a_i X + b_i Y) + A[X, Y],$$ where $A[X,Y] \in R[X,Y],\deg A < d$. Since $\prod_{i=1}^d (a_i X + b_i Y)$  is homogeneous of degree $d$, we have
    \[f_d (X^d - Y^d) = \prod_{i=1}^d (a_i X + b_i Y)=\prod_{i=1}^d a_i\prod_{i=1}^d(X-\mu_i Y),\]
    where $\mu_i=-b_i/a_i\in K$. Plugging in $Y = 1$ and noting that $\prod_{i=1}^d a_i=f_d$ (by comparing coefficients at $X^d$) we obtain
    \[ X^d - 1 = \prod_{i=1}^d (X - \mu_i).\]
    In particular we see that all the $d$-th roots of unity in the algebraic closure of $K$ lie in $K$, and since $R$ is integrally closed (because it is a UFD) they lie in $R$. Since $p=\mathrm{char}(K)>0$ these are actually the $m$-th roots of unity, where $d=mp^l,\,(m,p)=1$.
\end{proof}

% Now we assume that $p>0$ is prime, $q$ is a power of $p$, and restrict our attention to the case $R=\F_q[T]$.

% \begin{corol}
% \label{corol for coefficients of special polynomials}
%     Let $R=\F_q[T]$, $f\in R[X]$ a special polynomial. In the notation of Lemmas \ref{lemma non zero coefficients in special polynomials} and \ref{lemma for coefficients of special polynomials } we have 
%     \begin{enumerate}
%         \item[(i)] $\deg \alpha_i \leq \deg f_d$.
%         \item[(ii)] $\mu_i \in \mathbb{F}_q$.
%         \item[(iii)] If $d=\deg f=m p^l, (m,p) = 1$ then $m \mid q-1$.
%     \end{enumerate}
% \end{corol}

We are ready to prove the forward direction of \Cref{clasifaction of spesial polinomials}.

\begin{prop}\label{prop: classification forward} Let $f\in R[X]$ be special. Then it has the form stated in \Cref{clasifaction of spesial polinomials}.\end{prop}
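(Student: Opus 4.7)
The plan is to begin with the factorization supplied by \Cref{lemma for coefficients of special polynomials } and then split into the two cases of the theorem according to $p=\mathrm{char}(K)$.

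In characteristic $0$, I would work from $f(X)-f(Y)=f_d\prod_{j=1}^d(X-\zeta^jY-b^{(j)})$, where (after relabeling, using that setting $X=Y$ forces exactly one factor to vanish identically) $\zeta$ is a primitive $d$-th root of unity in $K$ and $b^{(d)}=0$. Substituting $X=TY$ with $T$ a formal indeterminate and comparing coefficients of $Y^{d-1}$ gives the identity
\[-f_d(T^d-1)\sum_{k=1}^d\frac{b^{(k)}}{T-\zeta^k}=f_{d-1}(T^{d-1}-1).\]
As $T^d-1$ has $d$ simple roots at $\zeta^k$ in $K$, comparing residues at each $T=\zeta^k$ yields $b^{(k)}=-f_{d-1}(1-\zeta^k)/(df_d)$, so $A:=f_{d-1}/(df_d)\in K$ satisfies $b^{(k)}=-A(1-\zeta^k)$ for every $k$. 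Hence $g(X):=f(X-A)$ has every $b$ equal to zero, giving $g(X)-g(Y)=g_d(X^d-Y^d)$; setting $Y=0$ yields $g(X)=g_dX^d+C$ and therefore $f(X)=f_d(X+A)^d+C$, the stated form.

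In characteristic $p>0$ with $d=mp^l$, $(m,p)=1$, my strategy is to extract group-theoretic structure from the factorization. Each linear factor $X-cY-e$ of $f(X)-f(Y)$ corresponds to an affine automorphism $\sigma_{c,e}:Y\mapsto cY+e$ with $f\circ\sigma_{c,e}=f$, and these form a finite subgroup $G\leq\mathrm{Aff}(K)=K^*\ltimes K$ fitting into an exact sequence $1\to V_f(K)\to G\to\mu_m(K)\to 1$, where $V_f(K):=\{h\in K:f(X+h)=f(X)\}$. By a generalization of \Cref{V_f: trivial when degree is not devisable by p}, $|V_f(K)|=p^v$ is a $p$-power, so $|G|=mp^v$. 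A quick conjugation computation ($(1,t)\in V_f(K)$ conjugated by a lift $(\zeta,e)\in G$ yields $(1,\zeta t)\in V_f(K)$) shows $V_f(K)$ is closed under multiplication by $\zeta$, hence is an $\F_p(\zeta)$-linear subspace of $K$. I then introduce $P(X):=\prod_{h\in V_f(K)}(X-h)\in K[X]$, the additive polynomial of degree $p^v$ with root set $V_f(K)$; the $\F_p(\zeta)$-linearity of $V_f(K)$ yields the identity $P(\alpha X)=\alpha^{p^v}P(X)$ for every $\alpha\in\F_p(\zeta)$.

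The heart of the argument is to produce the shift $A$ and the perfect-power structure. Since $m$ is invertible in $K$, averaging gives $s:=\tfrac{1}{m}\sum_{j=1}^m b^{(j)}\in K$ (where $b^{(j)}$ is any lift of $\zeta^j$ to $G$) such that $U_0:=P(s)$ is the common fixed point in $K$ of the induced $\mu_m$-action $U\mapsto\zeta^{jp^v}U+P(b^{(j)})$; the value $P(b^{(j)})$ depends only on $j$ and not on the chosen lift, because $P$ vanishes on $V_f(K)$. I set $A:=-s\in K$ and $Q(X):=P(X+A)^m=(P(X)-U_0)^m$. The $\F_p(\zeta)$-linearity of $V_f(K)$ combined with the fixed-point property of $U_0$ implies that $Q(X)-Q(Y)$ splits in $K[X,Y]$ into $mp^v$ distinct linear factors, and these are exactly the $|G|=mp^v$ distinct linear factors of $f(X)-f(Y)$. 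Moreover an orbit argument (substitution $Y\mapsto cY+e$ for $(c,e)\in G$ preserves $f(X)-f(Y)$ while transitively permuting the distinct linear factors) shows each of the $mp^v$ distinct factors occurs in $f(X)-f(Y)$ with the common multiplicity $d/|G|=p^{l-v}$. Hence
\[f(X)-f(Y)=f_d(Q(X)-Q(Y))^{p^{l-v}}=f_d\bigl(Q(X)^{p^{l-v}}-Q(Y)^{p^{l-v}}\bigr)\]
in characteristic $p$; therefore $f(X)-f_dQ(X)^{p^{l-v}}$ is independent of $X$, equal to some constant $C\in K$, giving $f(X)=f_dQ(X)^{p^{l-v}}+C=f_dP(X+A)^{mp^{l-v}}+C=f_d\prod_{b\in V_f(K)}(X-b+A)^{mp^{l-v}}+C$, the stated form with $V:=V_f(K)$. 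The hardest step will be the identification of the linear factors of $Q(X)-Q(Y)$ with those of $f(X)-f(Y)$, which rests on the joint interplay of the group $G$, the additive polynomial $P$, and the $\F_p(\zeta)$-linearity of $V_f(K)$.
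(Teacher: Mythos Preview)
Your proposal is correct and takes a genuinely different route from the paper's proof.

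In characteristic $0$ the paper does not give a separate argument (it says the case is ``similar to $p>0,\,l=0$''), whereas you give a clean direct computation: the substitution $X=TY$ and the residue calculation at $T=\zeta^k$ pin down each $b^{(k)}$ explicitly as $-A(1-\zeta^k)$ with $A=f_{d-1}/(df_d)$, which immediately yields the shifted form $f(X)=f_d(X+A)^d+C$. This is more explicit and self-contained than what the paper sketches.

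In characteristic $p>0$ the two arguments diverge more substantially. The paper chooses an ad hoc shift $w=b_1^{(m-1)}/(1-\zeta^{-1})$ to force $f(\zeta X)=f(X)$, then compares the multisets $S_j=\{b_i^{(j)}\}$ across $j$ by substituting $Y\mapsto\zeta^kY$ in the factorization and invoking unique factorization in $R[X,Y]$; the equal-multiplicity statement follows by substituting $Y\mapsto Y+b$ for $b\in V_f$. Your approach instead packages everything into the affine automorphism group $G=\{(c,e):f(cX+e)=f(X)\}\le K^*\ltimes K$, obtains the $\F_p(\zeta)$-linearity of $V_f(K)$ from the conjugation action, produces the shift by averaging over $\mu_m$ (a standard $H^1$-vanishing step, legitimate since $m$ is invertible), and then builds the factorization via the additive polynomial $P(X)=\prod_{h\in V_f(K)}(X-h)$ and $Q=P(X+A)^m$. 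The orbit argument for equal multiplicities is the same in spirit as the paper's, just phrased group-theoretically. Your framework makes the structural reason for the classification more transparent (exact sequence $1\to V_f(K)\to G\to\mu_m\to 1$ and its splitting), at the cost of introducing a bit more machinery; the paper's argument is more hands-on and stays closer to the explicit factorization.

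The step you flag as hardest --- identifying the linear factors of $Q(X)-Q(Y)$ with those of $f(X)-f(Y)$ --- does go through: writing $(P(X)-U_0)^m-(P(Y)-U_0)^m=\prod_j\bigl(P(X)-\zeta^jP(Y)-U_0(1-\zeta^j)\bigr)$, using $P(\zeta^{j'}Y)=\zeta^{j'p^v}P(Y)$ to rewrite each factor as $P(X-\zeta^{j'}Y)-P(b^{(j')})=P(X-\zeta^{j'}Y-b^{(j')})$ (via the fixed-point identity $P(b^{(j')})=U_0(1-\zeta^{j'p^v})$), and then expanding $P$ as a product over $V_f(K)$, one recovers exactly the $|G|$ distinct factors $X-cY-e$ indexed by $(c,e)\in G$. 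You should spell this out when writing the full proof.
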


\begin{proof}
    For brevity we only treat the case $p>0$, the case $p=0$ being similar to the case $p>0,\,l=0$ we treat below (using part (i) of Lemma \ref{lemma for coefficients of special polynomials } instead of part (ii) and the fact that $V_f=\{0\}$ (see \Cref{rem: V_f char 0}). Write $d = \deg f = p^lm, (m,p) = 1$. Denote by $\zeta$ a primitive $m$-th root of unity. From \Cref{lemma for coefficients of special polynomials }(ii) we have $\zeta\in R$ and
    \begin{equation}\label{eq:fxfy prod} f(X) - f(Y) = f_d\prod_{i=1}^{p^l} \prod_{j=1}^{m} \left(X - \zeta^j Y - b_i^{(j)}\right),\quad b_i^{(j)}\in K.\end{equation}

    Next consider the shifted polynomial $g(X)=f(X+w)\in K[X]$ with
    $$w=\left\{\begin{array}{ll}\frac{b_1^{(m-1)}}{1-\zeta^{-1}},&m>1,\\0,&m=1.\end{array}\right.$$
    Note that $g$ is also special but over the ring $K$ instead of $R$ (this is immediate from the definition) and also by the choice of $w$ we have $$g(\zeta X)=g(X),$$ because
    \[g(X) - g(\zeta X) = f(X + w) - f(\zeta X +  w) = f_d\prod_{i=1}^{p^l} \prod_{j=1}^{m} \left(X + w - \zeta^j (\zeta X + w)  - b_i^{(j)}\right)\]
    and if $m>1$ then $X+w-\zeta^{m-1}(\zeta X+w)-b_1^{(m-1)}=0$ (using $\zeta^m=1$). If we can show that $g$ has the form asserted by \Cref{clasifaction of spesial polinomials} except the coefficients are in $K$ instead of $R$, then $f(X)=g(X-w)$ would have the form asserted by the theorem. Replacing $f$ with $g$, we see that it is enough to prove the assertion under the additional assumptions that $R=K$ is a field and $f(X)=f(\zeta X)$.
    
    Thus from now on we assume that $R=K$ is a field and $f(X) = f(\zeta X) = f(\zeta^2 X) = ... = f(\zeta^{m-1} X)$. 
    Next let us prove that for all $j$ the multisets $$S_j := \left\{b_{i}^{(j)} : 1 \leq i\leq p^l\right\}$$ are the same for all $1\le j\le m$. To this end let us identify the indices $1\le j\le m$ with their corresponding residues in $\Z/m$ and observe that by (\ref{eq:fxfy prod}) we have
    \begin{multline}\label{eq: prod multisets}\prod_{j\bmod m}\prod_{b\in S_j}\left(X-\zeta^jY-b\right)=f(X)-f(Y)=f(X)-f(\zeta^kY)=\prod_{j\bmod m}\prod_{b\in S_j}\left(X-\zeta^{j+k}Y-b\right)
    \\=\prod_{j\bmod m}\prod_{b\in S_{j-k}}\left(X-\zeta^j Y-b\right)\end{multline}
    for each $k\in\Z/m$. From the first equality in (\ref{eq: prod multisets}) we see that the number of times an element $b$ appears in the multiset $S_j$ equals the multiplicity of the factor $X-\zeta^jY-b$ in the factorization of $f(X)-f(Y)$ (recall that $R[X,Y]$ is a UFD) and from the last equality it also equals the number of times $b$ appears in $S_{j-k}$. Hence the multisets $S_1,\ldots,S_m$ are all equal and we may rewrite (\ref{eq:fxfy prod}) as
    \begin{equation}\label{eq: fxfy prod 2}
        \begin{split}
            f(X) - f(Y) &= f_d\prod_{i=1}^{p^l} \prod_{j\bmod m} (X - \zeta^j Y - b_i)=f_d\prod_{b\in S_1}\prod_{j\bmod m}(X-\zeta^jY-b),
        \end{split}
    \end{equation}
    where $b_i=b_i^{(1)}$.
    
    Let us prove the following properties of the multiset $S_1=\{b_1,\ldots,b_m\}$: 
    \begin{enumerate}
        \item The underlying set of $S_1$ is 
        $V_f:=\{b\in R:f(X+b)=f(X)\}$. It is an $\F_p$-linear subspace of $R$, with $|V_f|=p^v$ for some $0\le v\le l$.
        \item Multiplication by $\zeta$ permutes $S_1$ as a multiset.
        \item $V_f$ is furthermore an $\F_p(\zeta)$-linear subspace of $R$.
        \item All elements of $S_1$ have the same multiplicity $p^{l-v}$.
    \end{enumerate}

For (1) observe that by (\ref{eq: fxfy prod 2}) we have
$$b\in S_1\iff X-Y-b\,\mid\,f(X)-f(Y)\iff f(X+b)=f(X),$$ hence $V_f$ is the underlying set of $S_1$. Arguing as in the proof of \Cref{V_f: trivial when degree is not devisable by p} one sees that $V_f$ is an $\F_p$-linear subspace of $R$. Since $|V_f|\le |S_1|=p^l$ we see that $|V_f|=p^v$ with $0\le v\le l$. Replacing $Y$ with $\zeta Y$ in (\ref{eq: fxfy prod 2}) and using $f(Y)=f(\zeta Y)$ gives (2). The properties (1),(2) imply (3). 
    % Using \cref{multisets property} we get that
    % \[c \in S_0 \iff (Z - c) \mid f(X + Z) - f(X) \iff f(X + c) = f(X)\] 
    % thus to prove 1 we must prove that $\{c \in \F: f(X+c) = f(X)\}$ is a vector space over $\mathbb{F}_p$ the proof is the same as the proof of \Cref{V_f: trivial when degree is not devisable by p}. To prove 2 we shall prove that $c \in S_0$ if and only if $\zeta c \in S_0$. we shall use the fact that $f(X) = f(\zeta X)$ to get
    
    % \begin{equation*}
    %     \begin{split}
    %          c \in S_0 & \iff f(X) = f(X + c) \\
    %          & \iff f(\zeta X) = f(\zeta X  + \zeta c) \\
    %          & \iff f(U) = f(U + \zeta c) \\
    %          & \iff \zeta c \in S_0
    %     \end{split}
    % \end{equation*}
    
    % Where we set $U := \zeta X$, now 3 follows immediately from 1 and 2. 

 % It remains to prove 4, let $c_1, c_2 \in S_0$ then 

 %        \begin{align}
 %            &c_1 \text{ is of multiplicity at least n} \nonumber \\ 
 %            &\iff (Z - c_1)^n \mid f(X + Z) - f(X) \nonumber\\
 %            & \iff (z + (c_1 - c_2) - c_1)^n \mid f(X + z + (c_1 - c_2)) - f(X) \label{eq:1} \\
 %            &\iff (z - c_2)^n \mid f(X + z + (c_1 - c_2)) - f(X)  \nonumber\\
 %            & \iff (z - c_2)^n \mid f(X + z) - f(X) \label{eq:2} \\
 %            & \iff c_2 \text{ is of multiplicity at least n} \nonumber.
 %        \end{align}
    
 %    Where \cref{eq:1} is by setting $Z = z + (c_1 - c_2)$ and \cref{eq:2} is by using the fact that $(c_1 - c_2) \in S_0$. Now there are $p^v$ elements with out multiplicity and $p^l$ with multiplicity, so the multiplicity of each element is $p^{l-v}$.     
 To prove (4) observe that by (\ref{eq: fxfy prod 2}) the multiplicity of $b\in V_f$ in $S_1$ equals the exponent of the factor $X-Y-b$ in the factorization of $f(X)-f(Y)$, but since $f(Y+b)=f(Y)$ it also equals the exponent of $X-Y$ in the factorization of $f(X)-f(Y)$ and is therefore independent of $b$. We have established properties (1-4).

 Now assuming WLOG that $V_f=\{b_1,\ldots,b_{p^v}\}$ we can rewrite (\ref{eq: fxfy prod 2}) as
    \[f(X) - f(Y) =f_d\prod_{i=1}^{p^v} \prod_{j=1}^{m} (X - \zeta^j Y - b_i))^{p^{l-v}}=f_d\prod_{i=1}^{p^v}\left((X-b_i)^{mp^{l-v}}-b_i^{mp^{l-v}}Y^{mp^{l-v}}\right).\]
    Setting $Y=0$ we see that $f$ has the form asserted in \Cref{clasifaction of spesial polinomials} with $A=0, C=f(0)$. This completes the proof.
\end{proof}

Finally we prove the converse direction in \Cref{prop: classification forward}.

\begin{prop}\label{prop: classification converse}
\begin{enumerate}\item[(i)] Assume that $p=0$, that $K$ contains a primitive $d$-th root of unity $\zeta$ and $$f=f_d(X+A)^d+C\in R[X]$$ with $A,C\in K$. Then $f$ is special.
\item[(ii)] Assume that $p>0$,  $d=p^lm,\,(m,p)=1$ and $K$ contains a primitive $m$-th root of unity. Suppose
$$f(X)=f_d\prod_{i=1}^{p^v}(X-b_i+A)^{mp^{l-v}}+C\in R[X],$$ where $A,C\in K$ and $V=\{b_1,\ldots,b_{p^v}\},\,0\le v\le l$ ($b_i$ distinct) is an $\F_p(\zeta)$-linear subspace of $K$. Then $f$ is special.\end{enumerate}
\end{prop}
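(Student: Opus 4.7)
The plan is to exhibit the factorization of $f(X)-f(Y)$ into a product of $d$ linear polynomials explicitly over $K[X,Y]$, then descend to $R[X,Y]$ via the UFD property of $R$. Case (i) is immediate: since $f=f_d(X+A)^d+C$, one has
\[
f(X)-f(Y)=f_d\bigl((X+A)^d-(Y+A)^d\bigr)=f_d\prod_{j=1}^d\bigl((X+A)-\zeta^j(Y+A)\bigr),
\]
and each factor is linear in $X$ and $Y$.

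For case (ii), the first step is a Frobenius reduction. Set $P(Z):=\prod_{b\in V}(Z-b+A)\in K[Z]$, so $f(X)=f_dP(X)^{mp^{l-v}}+C$. Since in characteristic $p$ the operation $(\cdot)^{p^{l-v}}$ commutes with subtraction, and $Z^m-W^m=\prod_{j=1}^m(Z-\zeta^jW)$ over $K$, one obtains
\[
f(X)-f(Y)=f_d\bigl(P(X)^m-P(Y)^m\bigr)^{p^{l-v}}=f_d\prod_{j=1}^m\bigl(P(X)-\zeta^jP(Y)\bigr)^{p^{l-v}}.
\]
It therefore suffices to factor each $P(X)-\zeta^jP(Y)$ linearly.

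The key step, where the hypothesis that $V$ is $\mathbb{F}_p(\zeta)$-linear enters, is to observe that $L_V(Z):=\prod_{b\in V}(Z-b)\in K[Z]$ is an $\mathbb{F}_p(\zeta)$-linearized polynomial; equivalently, $L_V(Z_1+Z_2)=L_V(Z_1)+L_V(Z_2)$ and $L_V(\alpha Z)=\alpha L_V(Z)$ for every $\alpha\in\mathbb{F}_p(\zeta)$. Additivity is standard since $V$ is an $\mathbb{F}_p$-subspace (as in the proof of \Cref{V_f: trivial when degree is not devisable by p}(ii)). The scaling property follows from the direct calculation $L_V(\alpha Z)=\alpha^{|V|}L_V(Z)$ for $\alpha\in\mathbb{F}_p(\zeta)^*$ (valid since $\alpha^{-1}V=V$), combined with $\alpha^{|V|-1}=1$, which holds because $|V|$ is a power of $|\mathbb{F}_p(\zeta)|$ (as $V$ is a vector space over $\mathbb{F}_p(\zeta)$). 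Applying this with $\alpha=\zeta^j$ and using $P(Z)=L_V(Z+A)$ gives
\[
P(X)-\zeta^jP(Y)=L_V(X+A)-L_V\bigl(\zeta^j(Y+A)\bigr)=L_V\bigl(X-\zeta^jY+A(1-\zeta^j)\bigr)=\prod_{b\in V}\bigl(X-\zeta^jY+A(1-\zeta^j)-b\bigr).
\]
Combining with the previous step yields a factorization of $f(X)-f(Y)$ into $d$ linear factors over $K[X,Y]$.

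The final step is to descend from $K[X,Y]$ to $R[X,Y]$. Since $R$ is a UFD, so is $R[X,Y]$, and $f(X)-f(Y)\in R[X,Y]$ admits a factorization $f(X)-f(Y)=c\cdot M_1\cdots M_d$ where $c\in R$ is the content and the $M_i\in R[X,Y]$ are primitive irreducible polynomials. By Gauss's lemma each $M_i$ is also irreducible in $K[X,Y]$, and comparing with the $K[X,Y]$-factorization obtained above forces each $M_i$ to be a scalar multiple of one of the displayed linear factors; hence $M_i=a_iX+b_iY+c_i$ with $a_i,b_i,c_i\in R$. Absorbing the constant $c$ into one of the $M_i$'s yields the required $R$-linear factorization, and so $f$ is special. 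The main obstacle is the $\mathbb{F}_p(\zeta)$-linearization of $L_V$; it crucially uses that $V$ is an $\mathbb{F}_p(\zeta)$-subspace rather than merely an $\mathbb{F}_p$-subspace, since one needs to pull the scalar $\zeta^j$ inside $L_V$ to collapse the inner difference back to a single $L_V$.
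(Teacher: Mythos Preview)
Your proof is correct and follows essentially the same route as the paper: reduce to $K$ via Gauss's lemma, then use the $\F_p(\zeta)$-stability of $V$ to exhibit the linear factorization explicitly, with the Frobenius handling the $p^{l-v}$ power. The only presentational difference is that you package the key computation as ``$L_V$ is an $\F_p(\zeta)$-linearized polynomial'' (additivity plus $L_V(\alpha Z)=\alpha L_V(Z)$), which lets you write $P(X)-\zeta^jP(Y)=L_V(X-\zeta^jY+A(1-\zeta^j))$ directly; the paper instead verifies $g(\zeta^{-j}(X+b_i))^m=g(X)^m$, deduces that each $X-\zeta^jY+b_i$ divides $g(X)^m-g(Y)^m$, and then matches degrees. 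Both arguments exploit the same identity $L_V(\alpha Z)=\alpha^{|V|}L_V(Z)$ with $\alpha^{|V|-1}=1$, so the content is identical.

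One minor remark: your parenthetical reference to \Cref{V_f: trivial when degree is not devisable by p}(ii) for the additivity of $L_V$ is a bit off, since that lemma shows $V_f$ is a subspace rather than that $L_V$ is additive; the additivity of $\prod_{b\in V}(Z-b)$ for an $\F_p$-subspace $V$ is standard but deserves its own one-line justification (e.g.\ the polynomial $L_V(Z_1+Z_2)-L_V(Z_1)-L_V(Z_2)$ has degree $<|V|$ in $Z_1$ and vanishes at every $b\in V$).
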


\begin{proof} {\bf (i).} Denote by $\zeta$ a primitive $d$-th root of unity. Then we have the factorization
$$f(X)-f(Y)=f_d\prod_{i=0}^{m-1}\left(X+A-\zeta^i(Y+A)\right)$$ and $f$ is special.

{\bf (ii).} Since $R$ is a UFD, by Gauss's lemma $f(X)-f(Y)$ factors into linear polynomials over $R$ iff it does over $K$. Hence we assume WLOG that $R=K$ is a field and $A,C,b_i\in R$. It is then immediate from the definition that $f(X)$ is special iff $\frac 1{f_d}(f(X-A)-C)$ is, hence we assume WLOG that $A=C=0,\,f_d=1$, i.e. $f=g^{mp^{l-v}}$ where $$g=\prod_{i=1}^{p^v}(X-b_i).$$

We now show that the following factorization holds:
\begin{equation}\label{eq: f is special}f(X)-f(Y)=\prod_{i=1}^{p^v}\prod_{j=1}^m(X-\zeta^jY+b_i)^{p^{l-v}}.\end{equation}
Since the Frobenius map is a ring homomorphism this is equivalent to
\begin{equation}\label{eq: g is special}g(X)^m-g(Y)^m=\prod_{i=1}^{p^v}\prod_{j=1}^m(X-\zeta^jY+b_i).\end{equation}
Note that since $V$ is an $\F_p(\zeta)$-linear subspace, the map $x\mapsto\zeta^{-j}(x+b_i)$ permutes $V$ and we have $$g(\zeta^{-j}(X+b_i))=\zeta^{-jp^v}g(X)$$ and therefore $g(\zeta^{-j}(X+b_i))^m=g(X)^m$. It follows that $X-\zeta^jY+b_i\,\mid\,g(X)^m-g(Y)^m$ and therefore the RHS of (\ref{eq: g is special}) divides the LHS. Since both sides of (\ref{eq: g is special}) have the same degree $mp^v$ and are monic in $X$, we must have an equality. This establishes (\ref{eq: g is special}) and therefore (\ref{eq: f is special}), which completes the proof.

\end{proof}

\begin{rem} Proposition \ref{prop: classification forward} and its proof work under the weaker assumption that $R$ is an integrally closed domain (not necessarily a UFD). Proposition \ref{prop: classification converse} does require the UFD assumption, but it can be replaced with $R$ being only integrally closed if one assumes that $f_d=1$, because Gauss's lemma works for monic polynomials over any integrally closed domain.\end{rem}

\bibliography{ref}
\bibliographystyle{amsrefs}

\end{document}